% !TeX spellcheck = fr_FR
%\documentclass[12pt]{memoireuqam1.3}
\documentclass[12pt,twoside]{article}
% Si vous souhaitez en recto-verso
\usepackage{graphicx}% Pour les figures
\usepackage[french,english]{babel}
\usepackage[utf8]{inputenc} % Pour utiliser les caractères accentués
\usepackage[T1]{fontenc} % Pour utiliser les caractères accentués
\usepackage{array} %pour faire des tableaux
\usepackage{tabularx,booktabs,multirow} %pour faire des tableaux, tables, etc.
\usepackage{amsmath, amsfonts, amssymb, amsthm, mathtools, thmtools, mathrsfs} %outils de math
\usepackage{ dsfont } %lettres doubles math
\usepackage{lmodern} %caractères spéciaux
\usepackage{ytableau, youngtab, genyoungtabtikz} %pour tab de young
\ytableausetup{aligntableaux=center}
\usepackage{cancel} %pour barrer du texte
\usepackage{comment}%pour commenter plusieurs lignes de code
\usepackage{url}%pour avoir des liens url dans le texte
\usepackage{subfigure}%pour avoir des sous-figures
\usepackage[normalem]{ulem}%?
\usepackage{makecell}
\usepackage{rotating}
\usepackage{longtable}
\usepackage{fullpage}
\usepackage{pdflscape}
\usepackage[title]{appendix}

\usepackage{imakeidx}
\makeindex[title=Index des définitions, program=makeindex, columns=2]
\let\emphorig\emph
\renewcommand{\emph}[1]{\emphorig{#1}\index{#1}}

%THEOREMS
%Pour renommer les sections pré-programmées pour thms, etc.
\newtheoremstyle{mes_theoremes}{1.5em}{2em}{}{}{\bfseries}{~:~}{\parskip}{\thmname{#1}\thmnumber{ #2}\thmnote{ (#3)}}
\theoremstyle{mes_theoremes} 
\newtheorem{theo}{Theorem}[section]
\newtheorem{theo*}{Theorem}
\newtheorem{prop}[theo]{Proposition}
\newtheorem{conj}[theo]{Conjecture}
\newtheorem{cor}[theo]{Corollary}
\newtheorem{lem}[theo]{Lemma}

\newtheorem{rem}[theo]{Remark}

\newtheorem*{ex*}{Example}
\newtheorem{ex}[theo]{Example}

%NOTES
%Pour pouvoir avoir des notes et commentaires en marge du texte
\usepackage[colorinlistoftodos]{todonotes}
\setlength{\marginparwidth}{2.75cm}
\newcommand{\flo}[1]{\todo[size=\tiny,color=green!30,inline]{#1}}
\newcommand{\etienne}[2][]{\todo[size=\tiny,color=red!30,inline,#1]{#2}}

%COMMANDES
%Pour pas devoir écrire au long quand on écrit qqchose souvent

\newcommand{\N}{\mathbb{N}}

\newcommand{\C}{\mathbb{C}}

\newcommand{\SYT}{\mathrm{SYT}}
\newcommand{\SSYT}{\mathrm{SSYT}}

%Commandes pour gyoung
\Yvcentermath1 %Pour que les tab soient centrés avec le texte
\Yboxdim{0.35cm} %Pour que la taille des cases soit 0.5cm
\newcommand\rd{\Yfillcolour{red}}
\newcommand\bl{\Yfillcolour{blue}}

\newcommand\wt{\Yfillcolour{white}}

%TIKZ
%Pour faire des figures en Tikz
\usepackage{tikz, tikz-3dplot, pgfplots}
\usepackage{tkz-graph}
\usepackage{ulem}
\usetikzlibrary[positioning,patterns,shapes, calc]
%\usetikzlibrary{calc,quotes}

\pgfdeclareshape{3rRevL}
{
\savedanchor\centerpoint{%
\pgf@x=.5\wd\pgfnodeparttextbox%
\pgf@y=.5\ht\pgfnodeparttextbox%
\advance\pgf@y by -.5\dp\pgfnodeparttextbox%
}
\anchor{center}{\centerpoint}
\anchorborder{\centerpoint}
\anchor{text}{%
\centerpoint%
\pgf@x=-\pgf@x%
\pgf@y=-\pgf@y%
}
}

\pgfplotsset{compat=1.8}
\tikzstyle{bsq}=[rectangle, draw, thick, minimum width=1cm, minimum height=1cm] 
\tikzstyle{3rver}=[rectangle, draw, thick, minimum width=1cm, minimum height=3cm]
\tikzstyle{3rhor}=[rectangle, draw, thick, minimum width=3cm, minimum height=1cm]
%\tikzstyle{3rRevL}=[ (0,0) -- (2,0) -- (2,2) -- (2,1) -- (1,1) -- (1,0) -- cycle, thick]
%\tikzstyle{3rJ}=[ (0,0) -- (0,1) -- (1,1) -- (1,2) -- (2,2) -- (2,0) -- cycle, thick]
\def\3rRevL at (#1,#2){\draw (#1,#2) -- ++(0,2) -- ++(2,0) -- ++(0,-1) -- ++(-1,0) -- ++(0,-1) -- cycle;}
\def\3rJ at (#1,#2){\draw (#1,#2) -- ++(0,1) -- ++(1,0) -- ++(0,1) -- ++(1,0) -- ++(0,-2) -- cycle;}

\begin{document}

%%%%%%%%%%%%%%%%%%%%
% Pour la page titre
%%%%%%%%%%%%%%%%%%%%
\title{%Three approaches to 
3-Plethysms of homogeneous and elementary symmetric functions}

\author{Florence Maas-Gariépy, Étienne Tétreault }

\thispagestyle{empty}        % La page titre n'est pas numérotée
\maketitle

%%%%%%%%%%%%%%%%%%%%
% Document principal
%%%%%%%%%%%%%%%%%%%%

\begin{abstract}
We introduce the new combinatorial approach of plethystic type of tableaux, as a method to understand coefficients of Schur functions appearing in plethysms $s_\nu[h_\lambda]$ and $s_{\nu}[e_{\lambda}]$, for any partitions $\lambda$ and $\nu$. We first give general results about this approach, then use results on tableaux, ribbon tableaux and integer points in polytopes to understand the case where $\nu$ is a partition of $3$ and $\lambda$ has one part. We then use a \textit{Kronecker map} to extend these results to any partition $\lambda$.
%We use the insights obtained from each to form a construction for plethystic attributions.
\end{abstract}

%We use the insights obtained from each to form a construction for plethystic attributions.

%\section{Introduction}

Plethysm of symmetric functions is an operation which arises naturally in representation theory as the character of a composition of representations. In general, we are interested in understanding plethysms $f[g]$ for any two symmetric functions $f,g$, which are characters of polynomial representations of the general linear group. The plethysm $f[g]$ is also a character, so we are interested in understanding its decomposition into the basis of Schur functions $s_\lambda$, as these are the irreducible characters of the general linear group, which are indexed by partitions $\lambda$. The general problem can be reduced (slightly) to that of understanding $s_\nu[g]$ for any Schur function $s_\nu$, but this remains extremely difficult, and an open problem. %in general. %Using properties of plethysm allows to reduce the problem (slightly) to that of understanding the decomposition in the Schur basis of $f$ and of all plethysms $s_\nu[g]$ for any Schur functions $s_\nu$. 
%2e formulation François
Our approach exploits the classical decomposition into irreducibles of the $m^{\rm th}$-tensor power representation. Under composition, this decomposition translates to the decomposition of the $m^{\rm th}$-tensor power of any representation. Coined as plethysm of characters, this gives the symmetric function identity
%1e formulation François
%Given in plethystic terms, the classical decomposition of the $m^{th}$ tensor product of representations into irreducibles gives 
%One essential tool to understand the plethysms $s_\nu[g]$ is the fact that for any symmetric function $g$, 
$g^m = \sum_{\nu\vdash m} f^\nu s_\nu[g]$, where $f^\nu$ is the number of standard tableaux of shape $\nu$. Therefore the study of powers of symmetric functions can enlighten us in our quest to understand plethysm.

In this article, we completely describe the plethystic decomposition of $g^m$ when $m=3$ and $g$ is either a homogeneous symmetric function $h_{\lambda}$ or an elementary symmetric function $e_{\lambda}$ . We also give partial advances on the general question, for any $m$. 
After a review of basic notions about symmetric functions in section~\ref{section:background}, we explain our combinatorial approach of plethystic type in section~\ref{section:plethysm}, and state general results in section~\ref{section:general}. We then use these rules in the case $h_n^3$ in section~\ref{sec:tableaux}. We establish results on ribbon tableaux and integer points in polytopes in sections~\ref{section:ribTab} and \ref{section:polytopes}, and use them to understand the decomposition of $h_n^3$ in section~\ref{section:Types}. There are already formulas for the coefficients of these plethysms, known as Chen's formulas \cite{Chen}, but we give an explicit combinatorial description. By using the $\omega$ involution on symmetric functions, we can also understand the decomposition of $e_n^3$ in section~\ref{section:elementary}. Finally, using properties of plethysm, Kronecker coefficients and jeu de taquin, we use these results to understand the decomposition of $h_{\lambda}^3$ and $e_{\lambda}^3$ in section~\ref{section:Product}. This last part extends the results of %is a generalization of another article from the authors
\cite{MaasTetreault}, which introduced the description of the plethystic decomposition of $h_{\lambda}^2$ and $e_{\lambda}^2$ in terms of plethystic type of tableaux.

\section{Background}
\label{section:background}

As much as possible, results, definitions and notations will be introduced when needed. The following gives a very basic overview of the notations and the definitions that are used throughout the article. We refer to \cite{Fulton}, \cite{Sagan} or \cite{Stanley} for more detailed descriptions.\\

Recall that partitions are weakly decreasing sequences of integers. If all parts of a partition $\lambda = (\lambda_1, \lambda_2, \ldots, \lambda_k)$ add up to $n$, we say that $\lambda$ is a partition of $n$, denoted $\lambda\vdash n$. We identify partitions with their diagrams, the left- and top-justified arrays of boxes with $\lambda_i$ cells in the $i^{th}$ row. We can also form skew partitions $\lambda/\mu$ by removing the cells of $\mu$ from $\lambda$, if $\mu$ is contained in $\lambda$. A filling of the cells of a diagram $\lambda$ by positive integers is called a \textit{tableau} of shape $\lambda$. We say that a tableau is \textit{semistandard} if its rows weakly increase from left to right, and its columns strictly increase from top to bottom. Unless otherwise stated, we use the word tableau to mean semistandard tableau further on, and we denote by $\SSYT(\lambda)$ the set of tableaux of shape $\lambda$. If every entry from $1$ to $n$ appears exactly once, we say that the tableau is \textit{standard}, and we denote it by $t$ or other lowercase letters to distinguish them. \\

The \textit{content} of a tableau $T$ is the sequence $(\beta_1, \beta_2, \ldots)$, where $\beta_i$ is the number of entries $i$ in $T$. For $x = (x_1, x_2, \ldots)$, the monomial associated to $T$ is $x^T = x_1^{\beta_1} x_2^{\beta_2} \ldots $. 

%For $\lambda$ fixed, we consider the set of tuples $\vec{t} = (t_1, t_2, \ldots,t_k)$, where $t_i$ is a row tableau of shape $(\lambda_i)$, denoted $T_\leq (\lambda)$. %Each row tableau is weakly increasing from left to right, but they need not have any relation between them. 
%Weights and monomials are defined just as for tableaux.

\begin{figure}[h]
    \centering
    $T =  \young(11123,22334,346,46) \in \SSYT(5,5,3,2)$, and $x^T=x_1^3x_2^3x_3^4x_4^3x_6^2$. \\
    
    %$\vec{t} = \left( \young(12357), \young(12334), \young(455), \young(12), \young(4), \young(3) \right) \in T_\leq (5,5,3,2,1,1)$
    
    \caption{Tableau $T$ %and tuple of row tableaux $\vec{t}$, both 
    of shape $(5,5,3,2)$,
    content $(3,3,4,3,0,2)$ and associated monomial $x^T$.}
    \label{fig:my_label}
\end{figure}

\textit{Schur functions} $s_\lambda$ are defined as the sum of the weights of all tableaux of shape $\lambda$: $s_\lambda = \sum_{T\in \SSYT(\lambda) } x^T$. \textit{Homogeneous symmetric functions} $h_{\lambda}$ are defined as the product $h_{\lambda_1}h_{\lambda_2}\ldots h_{\lambda_k}$, where $h_n = s_{(n)}$. \textit{Elementary symmetric functions} $e_{\lambda}$ are defined as the product $e_{\lambda_1}e_{\lambda_2}\ldots e_{\lambda_k}$, where $e_n = s_{(1)^n}$. %the sum of weights of all possible tuples of line tableaux which lengths are given by the $\lambda_i$: $h_\lambda = \sum_{\vec{t} \in T_\leq (\lambda)} x^{\vec{t}}$.
 The sets $\{s_\lambda \ | \ \lambda \text{ partition} \}$, $\{h_\lambda \ | \ \lambda \text{ partition}\}$ and $\{e_{\lambda} \ | \ \lambda \text{ partition}\}$ are three bases of the \textit{algebra of symmetric functions}: formal sums on a countable set of variables $x=(x_1,x_2, \ldots)$, with coefficients in $\C$, such that exchanging any two variables gives back the original formal sum. 
 In particular, there is a scalar product $\langle , \rangle$ on this algebra, for which Schur functions are orthonormal.\\

We use the Pieri rule to describe the power of homogeneous symmetric functions:
\begin{prop}
For any partition $\mu$ and positive integer $n$:
\[
s_{\mu}h_n = \displaystyle \sum_{\nu} s_{\nu},
\]
where the sum is over all partitions $\nu$ such that $\nu / \mu$ has $n$ cells and does not have two cells in the same column.
\end{prop}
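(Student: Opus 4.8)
The plan is to prove the Pieri rule by extracting it from the combinatorial definition of Schur functions as generating functions over semistandard tableaux, using the orthonormality of the Schur basis to convert the statement into a statement about Kostka numbers, and then exhibiting an explicit bijection. Concretely, since $\{s_\lambda\}$ is an orthonormal basis, it suffices to show that $\langle s_\mu h_n, s_\nu\rangle$ equals $1$ when $\nu/\mu$ is a horizontal strip of size $n$ (no two cells in the same column) and $0$ otherwise. Equivalently, writing $s_\mu h_n = \sum_\nu c_{\mu,n}^\nu s_\nu$, we want $c_{\mu,n}^\nu \in \{0,1\}$ with the stated characterization.

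First I would reduce to a statement about tableaux. Expanding $s_\mu = \sum_{S \in \SSYT(\mu)} x^S$ and $h_n = s_{(n)} = \sum_{i_1 \le i_2 \le \dots \le i_n} x_{i_1}\cdots x_{i_n}$, the product $s_\mu h_n$ is the generating function over pairs $(S, w)$ where $S$ is a semistandard tableau of shape $\mu$ and $w$ is a weakly increasing word of length $n$. On the other side, $\sum_\nu c_{\mu,n}^\nu s_\nu$ has coefficient of $x^\alpha$ equal to $\sum_\nu c_{\mu,n}^\nu K_{\nu\alpha}$, where $K_{\nu\alpha}$ is the number of semistandard tableaux of shape $\nu$ and content $\alpha$. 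So the identity to prove, at the level of monomial coefficients, is: for every content vector $\alpha$, the number of pairs $(S,w)$ as above with combined content $\alpha$ equals the number of pairs $(T, \nu)$ where $\nu/\mu$ is a horizontal strip of size $n$ and $T \in \SSYT(\nu)$ has content $\alpha$.

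The core step is then to produce an explicit bijection between these two sets. Given a semistandard tableau $S$ of shape $\mu$ and a weakly increasing word $w = i_1 i_2 \cdots i_n$, I would insert the letters of $w$ into $S$ via Schensted row insertion, performed in order. The standard fact is that inserting a weakly increasing word into a semistandard tableau produces a semistandard tableau $T$ whose shape $\nu$ contains $\mu$, with $\nu/\mu$ a horizontal strip — indeed, because $w$ is weakly increasing, the successive bumping paths move strictly to the right, so the new cells appended at each step lie in distinct columns (this is precisely Pieri's combinatorial content). Recording which cells were added gives the pair $(T, \nu)$. Conversely, given $(T, \nu)$ with $\nu/\mu$ a horizontal strip, reverse row-insertion peeling off the cells of the strip from right to left recovers $(S, w)$ with $w$ weakly increasing. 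Checking that these two maps are mutually inverse, content-preserving bijections completes the proof.

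The main obstacle is verifying the horizontal-strip property and its converse carefully: one must show that row-inserting a weakly increasing word yields new cells in pairwise distinct columns (equivalently that reverse-insertion from a horizontal strip always produces a weakly increasing word), which rests on the monotonicity of successive bumping routes and requires a short induction on the length of $w$. Everything else — semistandardness of the insertion output, content preservation, the bijectivity bookkeeping — is routine once that lemma is in hand. Alternatively, if one prefers to avoid insertion entirely, the same reduction can be finished by a direct column-by-column matching argument between the word $w$ and the cells of $\nu/\mu$, but the RSK-style bijection is the cleanest and is the route I would take.
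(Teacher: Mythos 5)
The paper states this Pieri rule as a classical background fact and gives no proof of its own, deferring to the standard references (Fulton, Sagan, Stanley). Your proposal is a correct outline of exactly the standard argument those references give --- reduce to showing the coefficient of $s_\nu$ is $1$ precisely for horizontal strips, then biject pairs $(S,w)$ with $w$ weakly increasing against tableaux of shape $\nu$ via Schensted row insertion, the key lemma being that inserting a weakly increasing word adds cells in pairwise distinct columns --- so there is nothing to fault beyond the fact that the bumping-path monotonicity lemma is asserted rather than proved, which is acceptable in a sketch.
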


Using this rule, we can easily derive the following formula:

\[
h_n^m = \displaystyle \sum_{\mu \vdash mn} K_{(n)^m}^{\mu} s_{\mu},
\]

where $K_{(n)^m}^{\mu}$ are \textit{Kostka numbers}, counting the number of tableaux of shape $\mu$ and content $(n)^m=(n,n,\ldots,n)$, each entry $1$ to $m$ appearing $n$ times. The set of such tableaux is denoted $\SSYT(\mu,(n)^m)$.

\section{Plethysm and plethystic type}\label{section:plethysm}

\textit{Plethysm} is a binary operation on symmetric functions, denoted $f[g]$. The easiest way to define it is via \textit{power sum symmetric functions} $p_k = \sum_{i=0}^{\infty} x_i^k$, which form an algebraic basis of the ring of symmetric functions, just as the $h_n$'s and $e_n$'s respectively do.
Plethysm is then the unique operation such that, for any symmetric functions $f,g,h$ and any nonnegative integers $k,m$:

\begin{itemize}
    \item $p_k[p_m] = p_{km}$,
    \item $p_k[f + g] = p_k[f] + p_k[g]$,
    \item $p_k[f\cdot g] = p_k[f] \cdot p_k[g]$,
    \item $(f + g)[h] = f[h] + g[h]$,
    \item $(f\cdot g)[h] = f[h] \cdot g[h]$.
\end{itemize}

Using these rules, one can show the following equality, which is a standard result; we call it the \textit{plethystic decomposition} of $g^m$.

\begin{prop}\label{Plethystic types}
For every symmetric function $g$, and $m\in \N$,
\[g^m = (p_1[g])^m = \displaystyle \sum_{\nu \vdash m} f^{\nu} s_{\nu}[g],\]

\noindent where $f^{\nu} = K_{(1)^m}^{\nu}$ is the number of standard tableaux of shape $\nu$.
\end{prop}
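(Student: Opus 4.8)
The plan is to derive the identity from the character-theoretic meaning of plethysm, encoded in the axioms for power sums, by first proving it in the universal case $g = p_1$ and then substituting. Concretely, the key intermediate fact is the classical expansion of the $m^{\rm th}$ power sum (really, of $p_1^m = (x_1 + x_2 + \cdots)^m$) into Schur functions with standard-tableaux multiplicities, namely $p_1^m = \sum_{\nu \vdash m} f^\nu s_\nu$, where $f^\nu = K^\nu_{(1)^m}$ counts standard Young tableaux of shape $\nu$. This is itself a special case of the Pieri rule stated in the excerpt (or of the dual Cauchy/RSK identity): iterating $s_\mu h_1 = \sum_{\nu} s_\nu$ over all $\nu$ obtained from $\mu$ by adding one box, starting from the empty partition and doing this $m$ times, records exactly a standard tableau of shape $\nu$ for each term, so the coefficient of $s_\nu$ is the number of such tableaux, i.e. $f^\nu$. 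Equivalently this is $h_1^m = \sum_{\mu \vdash m} K^\mu_{(1)^m} s_\mu$ from the formula already displayed in the Background section with $n = 1$.

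First I would write $g^m = (p_1[g])^m$, which is immediate since $p_1[g] = g$: indeed $p_1$ acts as the identity for plethysm on the left, a consequence of $p_1[p_m] = p_m$ together with additivity and multiplicativity of $p_1[-]$ in its argument expressed in the power-sum basis. Next, I would apply the substitution rule for plethysm: for a fixed symmetric function $g$, the map $f \mapsto f[g]$ is a ring homomorphism (this is exactly what the axioms $(f+g')[h] = f[h] + g'[h]$ and $(f\cdot g')[h] = f[h]\cdot g'[h]$ say, extended linearly and multiplicatively, so it is well-defined on all symmetric functions once defined on an algebraic basis). Applying this homomorphism to the identity $p_1^m = \sum_{\nu \vdash m} f^\nu s_\nu$ from the previous paragraph yields
\[
(p_1[g])^m = \Big(\sum_{\nu \vdash m} f^\nu s_\nu\Big)[g] = \sum_{\nu \vdash m} f^\nu\, s_\nu[g],
\]
and combining with $g^m = (p_1[g])^m$ gives the claim.

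The only genuine content is the expansion $p_1^m = \sum_{\nu \vdash m} f^\nu s_\nu$, so that is where I would spend care; everything else is bookkeeping with the plethysm axioms. I expect the main obstacle — or at least the only step a careful reader will want justified — to be this Pieri iteration and the identification of its multiplicities with $f^\nu$, together with the (standard but worth stating) fact that $f \mapsto f[g]$ is a well-defined algebra endomorphism so that it may be applied termwise to an infinite Schur expansion. In the write-up I would simply cite the Pieri rule (Proposition above) for the former and the defining axioms of plethysm for the latter, and remark that this is the $n=1$ specialization of the displayed formula $h_n^m = \sum_{\mu \vdash mn} K^\mu_{(n)^m} s_\mu$.
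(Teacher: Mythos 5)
Your proof is correct. The paper gives no proof of this proposition, stating only that it is a standard result obtainable from the plethysm axioms listed just above it; your argument --- that $p_1[g]=g$, that $p_1^m=h_1^m=\sum_{\nu\vdash m}K^{\nu}_{(1)^m}s_\nu$ by iterating the Pieri rule, and that $f\mapsto f[g]$ is a ring homomorphism which may therefore be applied termwise to this expansion --- is precisely the derivation the paper has in mind, with all the details filled in.
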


Denote $\SYT_m$ the set of standard tableaux with $m$ cells. We have the equality:

\[
h_n^m = \displaystyle \sum_{\mu \vdash mn} K_{(n)^m}^{\mu} s_{\mu} = \displaystyle \sum_{t \in \SYT_m} s_{\text{shape}(t)}[h_n]
\]

This equality means that for any $\mu \vdash mn$, there is a way to partition the set $\SSYT(\mu,(n)^m)$ into distinct subsets $\SSYT_t(\mu,(n)^m)$, one for each standard tableau $t \in \SYT_m$, such that $\langle s_{\mu}, s_{\text{shape}(t)}[h_n] \rangle = \left| \SSYT_t(\mu,(n)^m) \right|$. We call such a partition a \textit{type attribution}.\\% for tableaux of filling $(n)^m$. \\

%For such a set partition, 
We say that a tableau $T$ has \textit{type} $t$ when $T \in \SSYT_t(\mu,(n)^m)$, and that the copy of $s_{\mu}$ indexed by $T$ (in $h_n^m$) contributes to the copy of $s_{\nu}[h_n]$ indexed by %the standard tableau 
$t$, if $t$ has shape $\nu$.

Attributing a type to every tableau of shape $(n)^m$ can then give a combinatorial description of the plethystic decomposition of $h_n^m$. However, how to do so explicitly is a difficult problem, and remains open in general. \\% in general.\\%, even for relatively small values of $m$.\\

This article is about the case $m=3$, so we want to attribute to tableaux of content $(n)^3$ either the type $\young(123)$, $\young(12,3)$, $\young(13,2)$ or $\young(1,2,3)$. %The difficulty of this type attribution is the fact that it is not determined: choices need to be made to attribute types. 
We explore conditions that must be respected by this attribution, and describe the simplest, and most elegant, type attribution for tableaux of filling $(n)^3$. We then use this result to understand the plethystic decomposition of $e_n^3$, $h_{\lambda}^3$ and $e_{\lambda^3}$. \\

In order to navigate between plethysms on homogeneous and elementary symmetric functions, we use the involution $\omega$ on symmetric functions defined on Schur functions by $\omega(s_\lambda) = s_{\lambda'}$, where $\lambda'$ is the \textit{conjugate} of $\lambda$: the partiton obtained from $\lambda$ by reflecting it along its main diagonal, exchanging lengths of rows and columns. As $h_n=s_{(n)}$ and $e_n=s_{(1)^n}$, we have that $\omega(h_n)=e_n$. Note that this involution preserves the scalar product. It is possible to show (\cite{Macdonald}, chapter 1, example 8.1) that:
\begin{center}
	$\omega(s_\nu[s_\gamma]) = \bigg\{ $ \begin{tabular}{cl} $s_\nu[s_{\gamma'}]$ & \text{ if } $\gamma \vdash n$ \text{ for } $n$ \text{ even } \\  $s_{\nu'}[s_{\gamma'}]$ & \text{ if } $\gamma \vdash n$ \text{ for } $n$ \text{ odd } \end{tabular} , 
\end{center}

In section \ref{section:elementary}, we show how the $\omega$ involution translates a plethystic decomposition of $h_n^m$ into a plethystic decomposition for $e_n^m$.

%gives a description of the Schur decomposition of the product of a Schur function $s_\lambda$ by a homogeneous function $h_n$: the Schur functions will be indexed by all partitions which can be obtained from $\lambda$ by adding to it a horizontal band of length $n$, in other words by adding $n$ cells to it such that not two are in the same column. 
%In particular, this allows to understand the Schur functions appearing in the square, cube, or any power of a homogeneous function: recording the addition of each horizontal band by filling their cells by $1$'s for the first one, $2$'s for the second and $3$'s for the third, etc., we obtain all possible tableaux of filling $(n)^3 = (n,n,n)$. Then,
%\[
%h_n^3 = h_{(n)^3} = \displaystyle \sum_{\mu \vdash 3n} K_{(n)^3}^{\mu} s_{\mu},
%\]
%where the Kostka numbers $K_{(n)^3}^\mu$ count tableaux of shape $\mu$ and content $(n)^3  = (n,n,n)$.

\section{General rules for attributing types}\label{section:general}

Before we study the case $m=3$, we study the general case. This allows us to have rules that can be applied in our case, but also guide future researches when $m > 3$.

We show that we can use a recursive process to describe type attribution for $m+1$ in terms of that for $m$, and that we can reduce our study by decomposing tableaux into simpler ones, while describing the corresponding change of plethystic type.

\subsection{Types and $\ell$-subtypes}

Using the formulas above, we have that

\vspace{-1em}
\begin{align*}
h_n^{m+1} &= (h_n)^{m}\cdot h_n \\
&= \left( \displaystyle \sum_{\nu\vdash n\cdot m} K^\nu_{(n)^m}s_{\nu} \right) \cdot h_n \\
&= \left( \displaystyle \sum_{t \in \SYT_{m}} s_{\text{shape}(t)}[h_n] \right) \cdot h_n \\
%&= \left( \displaystyle \sum_{t \in \SYT_{m}} s_{\text{shape}(t)}[h_n] \right)\cdot h_1[h_n] \\
&= \displaystyle \sum_{t \in \SYT_{m}} (s_{\text{shape}(t)} \cdot h_1)[h_n],
\end{align*}
where the last line is obtained by using $h_n=s_n=s_1[h_n]$, and the product rule of plethysm.\\

For any tableau $T$, denote $T\downarrow_{\ell}$ its subtableau with entries $1$ to $\ell$. %the tableau obtained from $T$ by only considering cells with entries in $\{1, \ldots, \ell \}$.
%
%Suppose $T \in \SSYT(\mu,(n)^m)$ is associated to a Schur function $s_{\mu}$ in the Schur expansion of $h_n^m$. 
Suppose $\bar{T} \in \SSYT(\mu,(n)^{m+1})$ is such that $\bar{T}\downarrow_m = T\in \SSYT(\nu,(n)^m)$. Then the Schur function associated to $\bar{T}$ in the Schur expansion of $h_n^{m+1} = h_n^m \cdot h_n$ comes from the product $s_{\nu} \cdot h_n$, and $\bar{T}$ is obtained from $T$ by adding a horizontal band of $n$ entries $m+1$.\\ %, where $s_{\mu}$ is associated to $T$. 
%We want to know if the knowledge of the type of $T$ can help us understand the type of $\bar{T}$. 

Now, suppose the type of $\bar{T}$ is $\bar{t}\in \SYT_{m+1}$, such that $\bar{t}\downarrow_m = t\in SYT_{m}$. Then the type of $T$ is $t$, as multiplying $s_{\text{shape}(t)}$ by $h_1$ corresponds to adding a cell to $t$, and so the type of $\bar{T}$ is obtained from the type of $T$ by adding one cell.\\ 

Using the above result recursively, we obtain an important rule for attributing types: \textit{if $T$ has type $t$, then the type of $T\downarrow_{\ell}$ is $t\downarrow_{\ell}$ for any $\ell$}. For this reason, we call $t\downarrow_{\ell}$ the $\ell$\textit{-subtype} of $T$. %\\
Using this rule, the plethystic decomposition of $h_n^{m+1}$ can be understood in terms of that of $h_n^m$, by understanding how adding $n$ cells to a tableau adds a cell to its type.\\

Let's consider previously known results for $m<3$. When $m=1$, the $1$-subtype is always \young(1). %plethystic decomposition is trivial. %, since $h_n = s_1[h_n] = s_{n}$. 
When $m=2$, we have %that $K_{(n)^2}^{\mu} = 1$ if $\ell(\mu) \leq 2$, and $0$ otherwise. We also have 
the following well-known formulas, attributed to Littlewood \cite{Littlewood}, which describe the $2$-subtypes (for the proof, see \cite{Macdonald}, chapter 1, example 8.9):

\[s_{2}[h_n]= \sum_{0\leq j\leq \lfloor \frac{n}{2} \rfloor} s_{2n-2j, 2j}, \] 
\[s_{11}[h_n]= \sum_{0\leq j\leq \lfloor \frac{n}{2} \rfloor} s_{2n-(2j+1), (2j+1)}. \] 

This means that if $T\in \SSYT(\nu,(n)^m$ is such that $T\downarrow_2\in \SSYT(\mu,(n)^2$, for $\mu = (\mu_1,\mu_2) \vdash 2n$, the $2$-subtype of $T$ is $\young(12)$ if $\mu_2$ is even, and $\young(1,2)$ if $\mu_2$ is odd. \\ 

Now, we describe how we can decompose tableaux of filling $(n)^m$ into simpler tableaux, while keeping track of changes of plethystic type.
To do so, we need the following notation. \\

If $T_1$ and $T_2$ are two tableaux of respective shape $\mu^{(1)}$ and $\mu^{(2)}$, denote $T_1 \vee T_2$ the tableau of shape $\mu^{(1)} + \mu^{(2)}$ (where addition is component-wise) obtained by concatenating each row, and reorder the entries in the rows so that they appear in weakly increasing order (if the result is not a tableau, then $T_1 \vee T_2 = \varnothing$). For example:

\[
\young(112,23,3) \ \vee \ \young(1123,23) = \young(1111223,2233,3).
\]
We also denote $T^{\vee k}$ for $\underbrace{T \vee \ldots \vee T}_{k \text{ times}}$. 

\subsection{Adding columns of height $m$}

Let $\mu = (\mu_1, \ldots, \mu_m)\vdash n\cdot m$, % be a partition of $nm$ 
and $T \in \SSYT(\mu,(n)^m)$. Then $T$ is of the form $\young(1,\vdots,m)^{\vee \mu_m} \vee \widehat{T}$, where $\widehat{T} \in \SSYT(\mu - (\mu_m)^m,(n-\mu_m)^m)$ has at most $m-1$ rows.%, since the $\mu_m$ columns of height $m$ must be filled with integers from $1$ to $m$. 

%For $\mu = (\mu_1, \ldots, \mu_m)$ with exactly $m$ parts, let $\widehat{\mu} = (\mu_1 - \mu_m, \ldots, \mu_{m-1} - \mu_m)$ (removing zero parts if necessary). Then, every $T \in \SSYT(\mu,(n)^m)$ is of the form $\young(1,\vdots,m) \vee \ldots \vee \young(1,\vdots,m) \vee \widehat{T}$, where $\widehat{T} \in \SSYT(\widetilde{\mu},((n-\mu_m)^m)$. This is because the only way to fill a column of height $m$ is with entries from $1$ to $m$. Then, we can restrict further our study to partitions with at most $m-1$ parts, as long as we know how adding a column of height $m$ changes the type of a tableau.

The following proposition states that the type of $T$ is then determined by the type of $\widehat{T}$ and the number of columns. The proof uses results of \cite{deBoeckPagetWildon}, and solves a previous conjecture in Chapter 4 of de Boeck's thesis \cite{DeBoeck}.

\begin{prop}
%let $\lambda$ be the partition obtained from $\mu$ by adding a column of height $3$. Then $\lambda$ is a partition of $3n$ for $n = m+1$.

For every $\mu \vdash mn$ and $\nu \vdash m$, we have 

\[\langle s_{\nu'}[h_{n+1}], s_{(1)^m+\mu} \rangle = \langle s_\nu[h_n], s_{\mu} \rangle.\]

%the coefficient of $s_{\mu}$ in the Schur expansion of $s_{\nu}[h_n]$ is equal to the coefficient of $s_{(1^m)+\mu}$ in the Schur expansion of $s_{\nu'}[h_{n+1}]$.

%If the Schur function $s_\mu$ associated to a tableau $Q$ (of shape $\mu$ and content $(n^m)$) contributes to $s_\nu[h_n]$, where $\nu \vdash m$, then the Schur function $s_{(1^m)+\mu}$ associated to $\young(1,\vdots,m)Q$ contributes to $s_{\nu'}[h_n]$, where $\young(1,\vdots,m)Q$ is the tableau of shape $(1^m)+\mu$ obtained from $Q$ by adding to it a column of height $m$.

\label{prop:columns}
\end{prop}

%One can convince herself that the sheer numbers agree with Chen's rule by verifying each case one by one. 

%The formal proof below uses results from \cite{DeBoeckPagetWildon}. We believe de Boeck was aware of this result, or at least could have derive it easily.

\begin{proof}
	
We use a special case of theorem 1.1 of \cite{deBoeckPagetWildon}. %in the special case where $\nu\vdash 3$, $\mu=(1^n)$, $\gamma\vdash 3n$ and $r = 1$. 
It gives us that \[\langle s_\nu[s_{(1^{n+1})}], s_{(m)\cup\mu} \rangle = \langle s_\nu[s_{(1)^n}], s_{\mu} \rangle,\]
where $(m)\cup\mu$ is the partition obtained by adding a part of length $m$ to $\mu$.  %in our case making it the (new) largest part of $\gamma$.\\

If we suppose that $n$ is even, the $\omega$ involution defined in section~\ref{section:plethysm} gives that

\begin{center}
\begin{tabular}{ccc}
	$\langle s_\nu[s_{(1)^{n+1}}], s_{(m)\cup\mu} \rangle $ & $=$ & $ \langle s_\nu[s_{(1)^n}], s_{\mu} \rangle$\\
	$\omega\downarrow$&&$\downarrow\omega$\\
	$\langle s_{\nu'}[s_{n+1}], s_{(1)^m+\mu'} \rangle $ & $=$ & $ \langle s_\nu[s_{n}], s_{\mu'} \rangle$\\
\end{tabular}.
\end{center}

%where $\gamma'$ is the conjugate of $\gamma$, and $(1^3)+\gamma$ is the partition obtained from $\gamma$ by adding one to the first three parts of $\gamma$ (so adding a column of height $3$).\\

If $n$ is odd, then $\nu$ and $\nu'$ in the bottom equation are interchanged. The desired result therefore arises no matter the parity of $n$.
\end{proof}

This proposition gives us a rule to attribute types : \textit{the type of $\young(1,\vdots,m) \vee T$ is the conjugate of the type of $T$}. Thus, the type of $T=\young(1,\vdots,m)^{\vee \mu_m}\vee \widehat{T}$ is the same as the one of $\widehat{T}$ if $\mu_m$ is even, and its conjugate % of the type of $\widehat{T}$ 
if $\mu_m$ is odd. %This allows us to restrict our study to tableaux $T$ with at most $m-1$ rows.

\subsection{Adding elements in the first row}

We also consider the following result, which is a special case of a theorem from Brion \cite{Brion}:

\begin{prop}\label{prop:FirstRow}
Let $\nu$ be a partition of an integer $m$, and $\mu \vdash mn$. Then:
\[
\langle s_{\nu}[h_n], s_{\mu} \rangle \leq \langle s_{\nu}[h_{n+1}], s_{\mu+(m)} \rangle,
\]
with the equality when $n$ is large enough.
\end{prop}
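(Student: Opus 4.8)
The statement asserts two things: the inequality $\langle s_{\nu}[h_n], s_{\mu} \rangle \leq \langle s_{\nu}[h_{n+1}], s_{\mu+(m)} \rangle$ for all $n$, and equality once $n$ is large enough (with $\mu, \nu$ fixed). Since the proposition is explicitly described as a special case of a theorem of Brion \cite{Brion}, the plan is to show how Brion's result specializes to this setting rather than to reprove it from scratch. Brion's theorem concerns the stability of the multiplicities of irreducibles in a plethysm (or more generally in $\mathrm{GL}$-decompositions obtained by a functorial construction): if one considers the $\mathrm{GL}$-module $S^{\nu}(S^{n}(V))$ — the Schur functor $\mathbb{S}_{\nu}$ applied to the $n$-th symmetric power of $V$ — then the multiplicity of the irreducible $\mathbb{S}_{\mu}(V)$ is monotone non-decreasing in $n$ after the substitution $\mu \mapsto \mu + (m)$ (equivalently, prepending a long enough first row), and stabilizes. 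I would first state this precisely, identifying $m = |\nu|$ so that $S^{\nu}(S^{n}(V))$ is a homogeneous polynomial functor of degree $mn$ and $\langle s_{\nu}[h_n], s_{\mu}\rangle$ is exactly the multiplicity of $\mathbb{S}_{\mu}$ in it.

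The key steps I would carry out: (1) translate the symmetric-function inner product into a $\mathrm{GL}(V)$-module multiplicity, $\langle s_\nu[h_n], s_\mu \rangle = \dim \mathrm{Hom}_{\mathrm{GL}(V)}(\mathbb{S}_\mu V, \mathbb{S}_\nu(S^n V))$ for $\dim V$ large; (2) exhibit an explicit injection of multiplicity spaces realizing the inequality — the natural candidate is the map induced by a fixed highest-weight vector, or a transpose/restriction map coming from the inclusion that "adds" the extra row of size $m$, using that $h_{n+1}$ contains a distinguished submodule/quotient related to $h_n$; concretely one uses that for a one-dimensional summand one has $S^{n+1}V \supseteq S^n V \otimes (\text{line})$ after picking a vector, and functoriality of $\mathbb{S}_\nu$ then gives an inclusion on the level of $\mathbb{S}_\nu$ applied to these, from which comparing isotypic components yields the inequality after the shift by $(m)$ in the index; (3) invoke Brion's stability statement (or a direct argument that the multiplicity sequence is bounded — e.g. by a fixed Kostka-type or Littlewood–Richardson bound independent of $n$ once the "interesting part" of $\mu$ below the first row is fixed) to conclude that a non-decreasing, bounded sequence of non-negative integers is eventually constant, giving the claimed equality for $n \gg 0$.

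The main obstacle I expect is step (2): making the inequality genuinely \emph{canonical} rather than just a dimension count. Producing an honest injection $\mathrm{Hom}(\mathbb{S}_\mu, \mathbb{S}_\nu(S^nV)) \hookrightarrow \mathrm{Hom}(\mathbb{S}_{\mu+(m)}, \mathbb{S}_\nu(S^{n+1}V))$ requires a careful choice of maps that is compatible with the Schur functor $\mathbb{S}_\nu$ — one must check that applying $\mathbb{S}_\nu$ to the inclusion $S^nV \hookrightarrow S^{n+1}V$ (multiplication by a fixed linear form, say) behaves well with respect to highest weights and does not collapse the relevant weight spaces. Alternatively, if one wants to stay purely within symmetric functions, the obstacle becomes identifying the correct combinatorial model (e.g. via the Hermite-type reciprocity or via counting semistandard tableaux / lattice points as done in Section~\ref{section:polytopes}) in which the shift $\mu \mapsto \mu + (m)$ together with $n \mapsto n+1$ manifestly adds options without removing any. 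Since the paper only needs the special case $m = 3$ and the polytope machinery of Section~\ref{section:polytopes} is already available, a clean fallback is to cite Brion directly for the general monotonicity-and-stability statement and merely verify that our $\langle s_\nu[h_n], s_\mu\rangle$ is an instance of the quantity to which his theorem applies, with the arithmetic progression parameter being $n$ and the stabilizing shift being the first row of length $m$; that verification is the only genuinely new content and is routine.
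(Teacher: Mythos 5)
Your proposal matches the paper exactly: the paper offers no proof of this proposition at all, simply stating it as a special case of a theorem of Brion, which is precisely the "clean fallback" you settle on. The additional detail you supply — translating $\langle s_\nu[h_n], s_\mu\rangle$ into a $\mathrm{GL}(V)$-multiplicity and sketching the injection of multiplicity spaces induced by $S^nV \hookrightarrow S^{n+1}V$ — goes beyond what the paper records, but the essential content (verify the specialization and cite Brion for monotonicity and stabilization) is the same.
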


%To use this proposition, n
Note that $T \mapsto T \vee \young(1\ldots m)$ is an injection from $\SSYT(\mu,(n)^m)$ to $\SSYT(\mu+(m),(n+1)^m)$. %We can interpret 
This proposition then gives us another rule for types: \textit{the type of $T \vee \young(1\ldots m)$ is the same as the type of $T$}.
More generally, let $T \in \SSYT(\mu,(n)^m)$, and suppose that there exists a positive integer $k$ and a tableau $\widetilde{T} \in \SSYT(\mu-(km),(n-k)^m)$ such that $T = \widetilde{T} \vee \young(1\ldots m)^{\vee k}$. Then, the type of $T$ is the same as the type of $\widetilde{T}$.\\ %This allows us to restrict our study to tableaux $T$ that cannot be written in the form $\widetilde{T} \vee \young(1\ldots m)$. 

Let's recall the three rules found so far to attribute types to a tableau $T$ of content $(n)^m$:
\begin{itemize}
\item The $\ell$-subtype of $T$ is the type of $T\downarrow_{\ell}$.
\item The type of $\young(1,\vdots,m) \vee T$ is the conjugate of the type of $T$;
\item The type of $T \vee \young(1\ldots m)$ is the same as the type of $T$;
\end{itemize}

We use them in the next section for $m=3$.

\section{Tableaux of content $(n^3)$}\label{sec:tableaux}

Our goal in this section is to understand the characteristics of the tableaux $T \in \SSYT(\mu,(n)^3)$, for any $\mu \vdash 3n$, and explore what should be considered when attributing a type to such tableaux.

\subsection{Number of tableaux}

There is a very simple formula for the coefficients $K_{(n)^3}^\mu$ of the Schur functions in $h_n^3$, which can be found in \cite{Thrall}:
\begin{prop}
The Kostka number $K_{(n)^3}^\mu$ counting tableaux of shape $\mu$ and content $(n)^3  = (n,n,n)$ is given by
\[K_{(n)^3}^\mu = \min(\mu_1-\mu_2, \mu_2-\mu_3)+1\]
when $\mu$ has at most three parts, and $0$ otherwise.
\label{prop:spanningAllTabOfACertainShape}
\end{prop}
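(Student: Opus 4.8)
The statement is about the Kostka number $K_{(n)^3}^\mu$ for $\mu = (\mu_1, \mu_2, \mu_3) \vdash 3n$, counting semistandard tableaux of shape $\mu$ with exactly $n$ ones, $n$ twos and $n$ threes. The plan is to show directly that such a tableau is entirely determined by a single nonnegative integer parameter, namely the number of $2$'s appearing in the second row, and to determine the exact range of that parameter.

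\textbf{Step 1: Reduce to counting a single parameter.} First I would observe that in any $T \in \SSYT(\mu, (n)^3)$, the first row (being weakly increasing and the top of strictly increasing columns) must consist of a string of $1$'s followed by a string of $2$'s followed by a string of $3$'s, the second row must consist of a string of $2$'s followed by a string of $3$'s (no $1$'s, by column-strictness against row~1), and the third row is forced to be all $3$'s. So the tableau is completely encoded by: the number of $1$'s in row~1 (which must be $\mu_1 - (\text{number of }1\text{'s used})$... actually all $n$ ones sit in row~1, since rows~2 and~3 can contain no $1$), the number of $2$'s in row~1, and the number of $2$'s in row~2. Since all $n$ ones are in row~1, row~1 is $1^n$ followed by $2^a$ followed by $3^b$ with $n + a + b = \mu_1$. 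Likewise all $n$ twos are split between row~1 and row~2: $a$ in row~1 and $n-a$ in row~2, so row~2 is $2^{n-a} 3^{\mu_2 - (n-a)}$, and row~3 is $3^{\mu_3}$. Thus $T$ is determined by $a$ alone.

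\textbf{Step 2: Translate semistandardness into inequalities on $a$.} I would then write down precisely which constraints on $a$ are forced. Nonnegativity of each row segment gives: $0 \le a$; $b = \mu_1 - n - a \ge 0$; $n - a \ge 0$; $\mu_2 - n + a \ge 0$. Column-strictness between rows~1 and~2 requires that wherever row~2 has an entry, row~1's entry in that column is strictly smaller: the $2$'s of row~2 (columns $1$ through $n-a$) must lie under $1$'s of row~1, i.e.\ $n - a \le n$, automatic; the $3$'s of row~2 must lie under $1$'s or $2$'s of row~1, which holds since row~1 has no $3$ before column $n+a+1$ and... this needs care, but it reduces to $\mu_2 \le n + a$, i.e.\ $a \ge \mu_2 - n$. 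Similarly column-strictness between rows~2 and~3 (all $3$'s under row~2) requires row~2 to have non-$3$ entries above the first $\mu_3$ columns, giving $\mu_3 \le n - a$, i.e.\ $a \le n - \mu_3$. Collecting and using $\mu_1 + \mu_2 + \mu_3 = 3n$ to rewrite, the surviving bounds are $\max(0, \mu_2 - n) \le a$ ... I expect these all collapse (via $3n = \mu_1+\mu_2+\mu_3$) to $n - \mu_2 \le a \le \min(\mu_1, n) - \ldots$; the cleanest form should be $\mu_1 - \mu_2 \le$ something and symmetric, yielding a range of integers of cardinality exactly $\min(\mu_1 - \mu_2, \mu_2 - \mu_3) + 1$ when $\mu_1 - \mu_2, \mu_2 - \mu_3 \ge 0$ (automatic since $\mu$ is a partition), and producing an empty range (count $0$) when $\mu$ has a fourth part.

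\textbf{Step 3: Conclude.} Counting the integers in the resulting interval gives the formula. The main obstacle is \textbf{Step 2}: getting the column-strictness inequalities exactly right and then simplifying the resulting system — using the constraint $\mu_1 + \mu_2 + \mu_3 = 3n$ — down to the clean symmetric expression $\min(\mu_1 - \mu_2, \mu_2 - \mu_3) + 1$. It is easy to produce a correct-looking but slightly-off interval here, so I would double-check the endpoints on a small example (e.g.\ $n = 2$, $\mu = (3,2,1)$, which should give $\min(1,1)+1 = 2$ tableaux). The case where $\mu$ has more than three parts is immediate since column~1 would need four strictly increasing entries drawn from $\{1,2,3\}$.
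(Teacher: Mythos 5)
Your proposal is correct and follows essentially the same route as the paper: both arguments observe that a tableau of content $(n)^3$ is determined by the single parameter counting the $2$'s in one of the first two rows, and then count the admissible range of that parameter (the paper phrases this as starting from an extremal filling and counting the possible exchanges of a $2$ with a $3$). The inequalities you worry about in Step 2 do collapse as hoped: one gets $|n-\mu_2| \le a \le \min(\mu_1-n,\, n-\mu_3)$ for $a$ the number of $2$'s in row one, and using $\mu_1+\mu_2+\mu_3=3n$ the interval length is $\min(\mu_2-\mu_3,\, n-\mu_3)+1$ when $\mu_2\le n$ and $\min(\mu_1-\mu_2,\, \mu_1-n)+1$ otherwise, which equals $\min(\mu_1-\mu_2,\mu_2-\mu_3)+1$ in both cases since $\mu_2\le n$ is equivalent to $\mu_2-\mu_3\le\mu_1-\mu_2$.
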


\begin{proof}

A tableau $T$ of content $(n^3)$ must have at most three rows, so let $\mu = (\mu_1,\mu_2,\mu_3) \vdash 3n$ be its shape. Recall that columns of height 3 must be of the form $\young(1,2,3)$, and all $1$'s must be on the first row. There are then two cases to consider. 

Suppose that $\mu_2 - \mu_3 \leq \mu_1 - \mu_2$, which is equivalent to $\mu_2 \leq n$. Then, we can fill the second row entirely with $2$'s. All other tableaux can be obtained by exchanging some of the 2's %lying outside columns of height 3 
with 3's on the first row. So, there are $\mu_2-\mu_3+1$ possible tableaux.

Otherwise, $\mu_2 - \mu_3 > \mu_1 - \mu_2$, which is equivalent to $\mu_2 > n$. Then, we can fill the first row entirely with 1's and 2's.%, and leftover entries $2$ and $3$ on the second row. 
We obtain all other tableaux by exchanging some 2's in the first row with 3's on the second, but at least $\mu_2 - n$ 2's must remain since they cover 3's. So, there are $\mu_1 - n - (\mu_2 -n) + 1 = \mu_1 - \mu_2 + 1$ possible tableaux. 

In both cases, the number of tableaux is exactly $\text{min}(\mu_1-\mu_2,\mu_2-\mu_3) + 1$.
\end{proof}

For example, let $\mu = (8,6,1) \vdash 15$. In the illustration below, there is $\mu_2 - \mu_1 = 8 - 6 = 2$ cells in red, and $\mu_3 - \mu_2 = 6 - 1 = 5$ cells in blue. We can always fill the smallest of the two parts with 2's; in this example, it corresponds to the red part. Then, we can exchange each of these 2's, starting with the rightmost one, with 3's that are in the other part (here the blue part). So, there are $\mu_2 - \mu_1 + 1 = 3$ tableaux of shape $\mu = (8,6,1)$.

\begin{center}
\gyoung(;1;1;1;1;1;2!\rd;2;2,!\wt;2!\bl;2;3;3;3;3,!\wt;3) $\rightarrow$ \gyoung(;1;1;1;1;1;2!\rd;2;3,!\wt;2!\bl;2;2;3;3;3,!\wt;3) $\rightarrow$ \gyoung(;1;1;1;1;1;2!\rd;3;3,!\wt;2!\bl;2;2;2;3;3,!\wt;3)
\end{center}

Therefore, we can consider the (unique) tableau which maximizes the number of $2$'s on the first % or the second 
row, and obtain all the others by making exchanges with $3$'s on the %other row.
second row.

\subsection{Chen's formulas}

The formulas below are due to Chen \cite{Chen}, % and Thrall, 
and give the number of copies of $s_\mu$ that lie in each part of the plethystic decomposition of $h_n^3$. The formula for $s_3[h_n]$ was already known to Thrall \cite{Thrall}, although given in another form. These formulas say that roughly a sixth of the tableaux in $\SSYT(\mu,(n)^3)$ must be of type $\young(123)$, a sixth of type $\young(1,2,3)$, and a third of each type $\young(12,3)$ and $\young(13,2)$. \\

For a fixed $\mu\vdash 3n$, we have %$\delta_\nu =$ 
$K^\mu_{(n)^3} = \min(\mu_1-\mu_2, \mu_2-\mu_3) +1 $. Chen's formulas \cite{Chen} state that the coefficients appearing in the plethysms of $h_n^3$ can be computed as following.\\%in the following way.

For $s_3[h_n] = \displaystyle \sum_{\substack{\mu \vdash 3n \\ \ell(\mu) \leq 3}} c_\mu s_\mu$: $\ \ \ c_\mu = \left\{
\begin{tabular}{cl}
      \large $\left\lceil \frac{%\delta_\mu
      K^\mu_{(n)^3}}{6} \right\rceil$ & \normalsize \multirow{2}{8cm}{if $(%\delta_\mu 
      K^\mu_{(n)^3} \mod 6) \in \{0,4\}$ OR \newline \quad $(%\delta_\mu 
      K^\mu_{(n)^3} \mod 6) \in \{1,3,5\}$ AND $\mu_2$ even\ } \\
      &\\
      &\\
      \large $\left\lfloor \frac{%\delta_\mu
      K^\mu_{(n)^3}}{6} \right\rfloor$ & \normalsize if neither above condition hold.\\
\end{tabular} \right.,$

\vspace{1em}
For $s_{111}[h_n] = \displaystyle \sum_{\substack{\mu \vdash 3n \\ \ell(\mu) \leq 3}} c_\mu s_\mu$: $c_\mu = \left\{
\begin{tabular}{cl}
      {\large $\left\lceil \frac{%\delta_\mu
      K^\mu_{(n)^3}}{6} \right\rceil$} & \normalsize \multirow{2}{8cm}{if $(%\delta_\mu 
      K^\mu_{(n)^3} \mod 6) \in \{0,4\}$ OR \newline \quad $(%\delta_\mu 
      K^\mu_{(n)^3} \mod 6) \in \{1,3,5\}$ AND $\mu_2$ odd\ } \\
      &\\
      &\\
      \large $\left\lfloor \frac{%\delta_\mu
      K^\mu_{(n)^3}}{6} \right\rfloor$ & \normalsize if neither above condition hold.\\
\end{tabular} \right.,$

\vspace{1em}
For $s_{21}[h_n] = \displaystyle \sum_{\substack{\mu \vdash 3n \\ \ell(\mu) \leq 3}} c_\mu s_\mu$: $\ c_\mu = $ {\large $\left\lceil \frac{%\delta_\mu
K^\mu_{(n)^3}-1}{3} \right\rceil$}.\\

These formulas were proven using the SXP algorithm \cite{Chen}. Thus, we have a way to prove if a choice of type for all tableaux of content $(n)^3$ is valid. Note that in Chen's article, the notation $\delta_\nu$ is used for $K^\nu_{(n)^3}$.

\subsection{Using general results}

We have seen that %for the third rule, consider the $2-$subtype of $T$, which is the type of its subtableau with entries $1$ and $2$. Recall that a tableau of content $(n)^2$ is of type $\young(12)$ if its second part is even, and $\young(1,2)$ if it is odd. %Thus, for a tableau $T$ of content $(n)^3$,
the $2$-subtype of a tableau $T$ of content $(n)^3$ is given by the parity of the number of $2$'s in its second row. Then,
\begin{itemize}
\item $T$ has type $\young(123)$ or $\young(12,3)$ if the number of 2's in its second row is even;
\item $T$ has type $\young(13,2)$ or $\young(1,2,3)$ if the number of 2's in its second row is odd.
\end{itemize}

About a half of tableaux will have either of these $2$-subtypes. %Combining this with Chen's rules, we obtain that of all 
Among tableaux with $2$-subtype $\young(12)$ (resp. $\young(1,2)$), about a third should be of type $\young(123)$ (resp. $\young(1,2,3)$), and about two thirds, type $\young(12,3)$ (resp. $\young(13,2)$), to respect Chen's rule.\\

We use rules of section~\ref{section:general} to restrict further our study. % to that of simpler tableaux. %The first rule says that adding a column of height $3$ conjugates the type, so we only need to consider partitions with at most $2$ parts. 
We have seen that any tableau $T \in \SSYT(\mu,(n)^3)$, for $\mu = (\mu_1,\mu_2,\mu_3)$, can be expressed as $\young(1,2,3)^{\vee \mu_3} \vee \widehat{T}$. %, where $\widehat{T} \in \SSYT(\widehat{\mu},(n-\mu_3)^3)$ is the tableau obtained from $T$ by removing its columns $\young(1,2,3)$, and $\widehat{\mu} = (\mu_1-\mu_3,\mu_2-\mu_3)$. 
Then,
\begin{itemize}
\item If $\mu_3$ is even, the type of $T$ is the same as the type of $\widehat{T}$;
\item If $\mu_3$ is odd, the type of $T$ is the conjugate of the type of $\widehat{T}$.
\end{itemize}

%The second rule says that $T$ and $T \vee \young(123)$ have the same type. These two rules imply that all tableaux with the same number of 2's and 3's in the second row and the same number of columns of height 3 have the same type. This condition make possible the construction of the polytope in section~\ref{section:polytopes}. 

We can moreover break down $\widehat{T}$ into $\widehat{T} = \bar{T}\vee \young(123)^{\vee k}$ for a certain $k$. %, where $\bar{T}$ cannot have three cells of content $1,2,3$ removed from its first row without either breaking the partition shape condition, or the conditions on rows and columns. 
Then $\widehat{T}$ and $\bar{T}$ have the same type. When $k$ is maximal, then the minimal tableaux $\bar{T}$ have interesting properties, which we study in section~\ref{section:polytopes} (see Figure~\ref{fig:ReductionTab3cellules1eLigne}).\\

We now describe the construction of a polytope which integer points represent tableaux of content $(n)^3$. The coordinates of these integer points are given by the characteristics of what we call Yamanouchi $3$-ribbon tableaux.

\section{3-ribbon tableaux}
\label{section:ribTab}
In this section, we define Yamanouchi $3$-ribbon tableaux, and show they are in bijection with tableaux of content $(n)^3$.

\subsection{General definition of $r$-ribbon tableaux}

A \textit{$r$-ribbon} is a connected skew diagram with $r$ cells, and no squares (sets of $2\times2$ boxes). Its \textit{head} is its northeast-most cell. %, and its tail, its southwest-most cell. 
The $10$-ribbon below has its head marked in red.%, and its tail, in blue.
\[ \gyoung(:::::;;;!\rd;,!\wt::::;;,:;;;;!\wt)
\]

We say that a shape $\lambda/\mu$ is \textit{pavable by $r$-ribbons} if there is a sequence of skew shapes $\mu = \lambda_0 \subseteq \lambda_1 \subseteq \lambda_2 \subseteq \ldots \subseteq \lambda_k = \lambda$ such that each $\lambda_{i-1}/\lambda_{i}$ is a $r$-ribbon. Each ribbon in the paving %of $\lambda/\mu$ 
can be filled by an positive integer to form a $r$-ribbon tableau. Its content is the composition $\alpha = (\alpha_1, \alpha_2, \ldots)$ where $\alpha_i$ is the number of ribbons with entry $i$. 

We say that a $r$-ribbon tableau is semistandard if the subset of %skew shapes giving the 
ribbons with entry $i$ form a \textit{horizontal band}: a sequence of partitions $\lambda_0 \subseteq \lambda_1 \subseteq \lambda_2 \subseteq \ldots \subseteq \lambda_k$, such that $\lambda_{j}/\lambda_{j-1}$ is a ribbon $\xi_j$ with entry $i$, and the head of $\xi_j$ lies weakly northeast of that of $\xi_{j-1}$.\\ %$\lambda_0$ is then the inner empty shape of the horizontal band, and $\lambda_k$, its outer shape.

We can define the reading word of a ribbon tableau to be the word read off by reading rows left to right, bottom to top, recording a ribbon only when its head is scanned. A reading word is said to be \textit{Yamanouchi} (or \textit{reverse lattice}) if the content of each of its suffix is a partition. A ribbon tableau with Yamanouchi reading word is said to be Yamanouchi.
Using this reading order, the semistandard $3$-ribbon tableau below has reading word $3333231111$, which is not Yamanouchi, since it admits a suffix whose content is not a partition ($(4,0,1)$).

\vspace{0em}
\begin{center}
	\resizebox{4.5cm}{!}{
		\begin{tikzpicture}[node distance=0 cm,outer sep = 0pt]
		\tikzstyle{every node}=[font=\LARGE]
		\node[3rver]  (1) at (1.5,3)     {1};
		\node[3rver]  (2) [below = of 1] {3};
		\draw[thick] (2,2.5) -- ++(0,2) -- ++(2,0) -- ++(0,-1) -- ++(-1,0) -- ++(0,-1) -- cycle; %3rRevL
		\node         (3) at (2.5,4)     {1};
		\node[3rhor]  (4) at ( 5.5, 4)   {1};
		\node[3rver]  (5) at ( 2.5, 1)   {3};
		\node[3rhor]  (6) at (8.5,4)     {1};
		\node[3rhor]  (7) at ( 4.5, 3)   {2};
		%\pgfnode[3rRevL] at (3,0.5) {7};
		%\draw[thick] (3,0.5) -- (3,2.5) -- (5,2.5) -- (5,1.5) -- (4,1.5) -- (4,0.5) -- cycle;
		
		\draw[thick] (3,0.5) -- ++(0,2) -- ++(2,0) -- ++(0,-1) -- ++(-1,0) -- ++(0,-1) -- cycle; %3rRevL
		\node        (8)  at (3.5,2)     {3};
		
		\draw[thick] (4,0.5) -- ++(0,1) -- ++(1,0) -- ++(0,1) -- ++(1,0) -- ++(0,-2) -- cycle; %3rJ
		\node        (9)  at (5.5,1)     {3};
		
		\draw[thick] (6,1.5) -- ++(0,2) -- ++(2,0) -- ++(0,-1) -- ++(-1,0) -- ++(0,-1) -- cycle; %3rRevL
		\node        (10)  at (6.5,3)     {3};
		
		\end{tikzpicture}
	}
\end{center}

\subsection{Yamanouchi $3$-ribbon tableaux}

Carré and Leclerc described in \cite{CarreLeclerc} the product of two Schur functions in terms of domino tableaux ($2$-ribbon tableaux).
%They proved that pairs of tableaux $(t_1,t_2)$ of shape $(\mu,\nu)$ are in bijection with domino tableaux of shape $I$ such that $(\mu,\nu)$ is the $2$-quotient of $I$. 
They showed that the number of Yamanouchi domino tableaux of a certain shape $I$ and content $\lambda$ give the multiplicity $c_{\mu \nu}^\lambda$ of $s_\lambda$ in $s_\mu s_\nu$. %When $\mu = \nu$, then $I = (2\mu)^2 = (2\mu_1,2\mu_1, 2\mu_2, 2\mu_2, \ldots, 2\mu_\ell, 2\mu_\ell)$. %A domino tableau is said to be Yamanouchi if it has a Yamanouchi (or reverse lattice) reading word: the content of each suffix is a partition. Domino tableaux are read off by scanning rows left to right and bottom to top, reading dominoes on their first (or last) scanning.\\ 

For a square $s_\mu^2$, they %Carré and Leclerc 
showed that the parity of the cospin of the Yamanouchi domino tableaux determines whether the associated Schur function $s_\lambda$ contributes to $s_2[s_\mu]$ %its symmetric 
or $s_{11}[s_\mu]$, where the cospin equal the number of horizontal dominoes divided by two.\\ %anti-symmetric part. 
%The cospin of a domino tableau (appearing in a square) is the number of horizontal dominoes it contains. More generally, 
%The cospin is more generally defined for ribbon tableaux as the maximal spin minus the spin of the ribbon tableau, divided by two, where the height of a ribbon is the number of rows it spans minus one, and the spin of a ribbon tableau is half the sum of the heights of its ribbons. 

We generalize their strategy to $3$-ribbon tableaux. The bijection described by Carré and Leclerc is a particular case of a bijection given by Stanton and White \cite{StantonWhite}, between $r$-tuples of tableaux and $r$-ribbon tableaux. The $3$-ribbon version gives a bijection between triples of tableaux and $3$-ribbon tableaux. %That correspondence most probably does not agree with crystal theory as we've seen with the Carré-Leclerc correspondence, but still points towards a crystal structure on $r$-ribbon tableaux. 
In particular, when all tableaux of the triple have shape $(n)$, the corresponding $3$-ribbon tableaux have shape $(3n)^3 = (3n,3n,3n)$.
This approach is interesting as it has the potential to generalize to $r$-ribbon tableaux (and so $r$-plethysms).\\ %We define below Yamanouchi  $3$-ribbon tableaux of shape $(3n)^3$, and show that they are in bijection with tableaux of filling $(n)^3$.\\ % that help explore the plethystic decomposition of $h_n^3$.

%The $3$-ribbon tableaux we will work with have shape $(3n,3n,3n) = (3n)^3$. 
We define Yamanouchi $3$-ribbon tableaux of shape $(3n)^3$ to be built out of blocks of types illustrated below, with the following conditions: if $\kappa_i$ is the number of blocks of type $i$, then 
\begin{itemize}
	\item $0\leq \kappa_i\leq n$ with $0\leq\kappa_3\leq 1$, 
	\item $\kappa_4\leq \kappa_1+\kappa_2$ and 
	\item $\sum_i \kappa_i =n$.
\end{itemize}

\vspace{1em}
\hspace{-3em}
\resizebox{!}{3.25cm}{
\begin{tikzpicture}[node distance=0 cm,outer sep = 0pt]

        \tikzstyle{every node}=[font=\LARGE]
        %bloc 1
        \node[3rver]  (1) at (0,0)       {1};
        \node[3rver]  (2) [right = of 1] {1};
        \node[3rver]  (3) [right = of 2] {1};
        %espace
        \node[rectangle, draw, thick, minimum height= 3cm, minimum width = 1cm] (4) [right = of 3]                          {$\hdots$};
        %bloc 1
        \node[3rver]  (5) [right = of 4] {1};
        \node[3rver]  (6) [right = of 5] {1};
        \node[3rver]  (7) [right = of 6] {1};
        
        %bloc 2 (partie 1)
        \node[3rver]  (8) [right = of 7] {1};
        
        %bloc 3
        \draw[thick]         (7.6,-0.5) -- ++(0,2) -- ++(2,0) -- ++(0,-1) -- ++(-1,0) -- ++(0,-1) -- cycle; %3rRevL
        \node         (9) at (8,1)       {1};
        \draw[thick]         (8.6,-.5) -- ++(0,1) -- ++(1,0) -- ++(0,1) -- ++(1,0) -- ++(0,-2) -- cycle; %3rJ
        \node        (10)  at (10,0)      {1};
        \node[3rhor] (11)  at (9.1,-1)    {2};
        %espace
        \node[rectangle, draw, thick, minimum height= 3cm, minimum width = 1cm] (12)                         at (11.2,0)  {$\hdots$};
        %bloc 3
        \draw[thick]         (11.8,-0.5) -- ++(0,2) -- ++(2,0) -- ++(0,-1) -- ++(-1,0) -- ++(0,-1) -- cycle; %3rRevL
        \node         (12) at (12.2,1)       {1};
        \draw[thick]         (12.8,-.5) -- ++(0,1) -- ++(1,0) -- ++(0,1) -- ++(1,0) -- ++(0,-2) -- cycle; %3rJ
        \node        (13)  at (14.2,0)      {1};
        \node[3rhor] (14)  at (13.3,-1)    {2};
        
        %bloc 2 (partie 2)
        \draw[thick]         (14.8,-0.5) -- ++(0,2) -- ++(2,0) -- ++(0,-1) -- ++(-1,0) -- ++(0,-1) -- cycle; %3rRevL
        \node         (15) at (15.2,1)       {1};
        \draw[thick]         (14.8,-1.5) -- ++(0,1) -- ++(1,0) -- ++(0,1) -- ++(1,0) -- ++(0,-2) -- cycle; %3rJ
        \node        (16)  at (16.2,-1)      {2};
        
        %bloc 4
        \node[3rhor] (17)  at (18.3,1)    {1};
        \draw[thick]         (16.8,-1.5) -- ++(0,2) -- ++(2,0) -- ++(0,-1) -- ++(-1,0) -- ++(0,-1) -- cycle; %3rRevL
        \node         (18) at (17.2,0)       {2};
        \draw[thick]         (17.8,-1.5) -- ++(0,1) -- ++(1,0) -- ++(0,1) -- ++(1,0) -- ++(0,-2) -- cycle; %3rJ
        \node        (19)  at (19.2,-1)      {2};
        %espace
        \node[rectangle, draw, thick, minimum height= 3cm, minimum width = 1cm] (20)                         at (20.4,0)  {$\hdots$};
        %bloc 4
        \node[3rhor] (23)  at (22.5,1)    {1};
        \draw[thick]         (21,-1.5) -- ++(0,2) -- ++(2,0) -- ++(0,-1) -- ++(-1,0) -- ++(0,-1) -- cycle; %3rRevL
        \node         (21) at (21.4,0)       {2};
        \draw[thick]         (22,-1.5) -- ++(0,1) -- ++(1,0) -- ++(0,1) -- ++(1,0) -- ++(0,-2) -- cycle; %3rJ
        \node        (22)  at (23.4,-1)      {2};
        
        %bloc 5
        \node[3rhor] (23)  at (25.5,1)     {1};
        \node[3rhor] (24) [below = of 23]  {2};
        \node[3rhor] (25)  [below = of 24] {3};
        \node[3rver] (26) [right = of 24]  {$\hdots$};
        \node[3rhor] (27)  at (29.6,1)     {1};
        \node[3rhor] (28) [below = of 27]  {2};
        \node[3rhor] (29)  [below = of 28] {3};
        
        \node (29) at (1,-2) {};
        \node (30) at (5,-2) {};
        \node (31) at (9,-2) {};
        \node (32) at (13,-2) {};
        \node (33) at (15.5,-2) {};
        \node (34) at (18,-2) {};
        \node (35) at (22,-2) {};
        \node (36) at (25,-2) {};
        \node (37) at (29,-2) {};
        \node (43) at (7,-2) {};
        
        \node (38) at (3,-4) {Type 1};
        \node (39) at (11,-4) {Type 2};
        \node (40) at (16,-4) {Type 3};
        \node (41) at (21,-4) {Type 4};
        \node (42) at (27,-4) {Type 5};
        
        \draw[->, thick] (38) -- (29);
        \draw[->, thick] (38) -- (30);
        \draw[->, thick] (39) -- (31);
        \draw[->, thick] (39) -- (32);
        \draw[->, thick] (40) -- (33);
        \draw[->, thick] (40) -- (43);
        \draw[->, thick] (41) -- (34);
        \draw[->, thick] (41) -- (35);
        \draw[->, thick] (42) -- (36);
        \draw[->, thick] (42) -- (37);
        
        \draw[thick] (-0.5,-2) -- (2.4,-2);
        \draw[thick] (3.6,-2) -- (6.4,-2);
        \draw[thick] (6.7,-2) -- (7.4,-2);
        \draw[thick] (7.7,-2) -- (10.5,-2);
        \draw[thick] (11.7,-2) -- (14.6,-2);
        \draw[thick] (14.9,-2) -- (16.6,-2);
        \draw[thick] (16.9,-2) -- (19.6,-2);
        \draw[thick] (21.1,-2) -- (23.8,-2);
        \draw[thick] (24.1,-2) -- (26.8,-2);
        \draw[thick] (28.2,-2) -- (30.9,-2);
\end{tikzpicture}
}

\begin{prop}
Any $3$-ribbon tableau defined as above have content 
\[
\nu = (3\kappa_1+2\kappa_2+2\kappa_3+\kappa_4+\kappa_5, \kappa_2+\kappa_3+2\kappa_4+\kappa_5, \kappa_5),
\]
where $\nu$ is a partition of $3n$, and has Yamanouchi reading word 
\[2^{\kappa_2}3^{\kappa_5} 2^{\kappa_3+2\kappa_4+\kappa_5} 1^{3\kappa_1+2\kappa_2+2\kappa_3+\kappa_4+\kappa_5} .\]
\end{prop}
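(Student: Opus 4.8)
The plan is to read off from the block pictures exactly which three ribbons occur in each block and in which rows their heads sit, and then carry out the bookkeeping.

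First I would record the content contribution of each block type. A block of type $1$ consists of three vertical $3$-ribbons with entry $1$, so content $(3,0,0)$; blocks of types $2$ and $3$ consist of two ribbons with entry $1$ and one ribbon with entry $2$, content $(2,1,0)$; a block of type $4$ has one ribbon with entry $1$ and two ribbons with entry $2$, content $(1,2,0)$; a block of type $5$ has one ribbon for each of the entries $1,2,3$, content $(1,1,1)$. Summing over the $\kappa_i$ blocks of type $i$ gives $\nu=(3\kappa_1+2\kappa_2+2\kappa_3+\kappa_4+\kappa_5,\ \kappa_2+\kappa_3+2\kappa_4+\kappa_5,\ \kappa_5)$, and since each block carries exactly three ribbons and $\sum_i\kappa_i=n$, the total number of ribbons is $3n$, hence $|\nu|=3n$. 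To see that $\nu$ is a partition, note $\nu_2-\nu_3=\kappa_2+\kappa_3+2\kappa_4\ge 0$, while $\nu_1-\nu_2=3\kappa_1+\kappa_2+\kappa_3-\kappa_4\ge 0$ because the hypothesis $\kappa_4\le\kappa_1+\kappa_2$ gives $\kappa_4\le\kappa_1+\kappa_2\le 3\kappa_1+\kappa_2+\kappa_3$.

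For the reading word I would locate the head of each ribbon in the block pictures; the statement to establish is that in every block the entry-$1$ ribbons have their heads on the top row, that the entry-$2$ ribbon of a type-$2$ block and the entry-$3$ ribbon of a type-$5$ block have their heads on the bottom row, and that the entry-$2$ ribbons of type-$3$, type-$4$ and type-$5$ blocks have their heads on the middle row. Since the reading word scans rows bottom to top, left to right, and the blocks are placed left to right in the order type $1,2,3,4,5$, the bottom-row pass records the $\kappa_2$ entries $2$ from the type-$2$ blocks followed by the $\kappa_5$ entries $3$ from the type-$5$ blocks; the middle-row pass records the $\kappa_3+2\kappa_4+\kappa_5$ entries $2$ from types $3,4,5$; and the top-row pass records all $3\kappa_1+2\kappa_2+2\kappa_3+\kappa_4+\kappa_5=\nu_1$ entries $1$. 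Concatenating these three subwords yields the claimed reading word $2^{\kappa_2}3^{\kappa_5}2^{\kappa_3+2\kappa_4+\kappa_5}1^{3\kappa_1+2\kappa_2+2\kappa_3+\kappa_4+\kappa_5}$.

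Finally I would verify this word is Yamanouchi directly from its explicit form $2^{a}3^{b}2^{c}1^{d}$ with $a=\kappa_2$, $b=\kappa_5$, $c=\kappa_3+2\kappa_4+\kappa_5$, $d=\nu_1$: each suffix has content $(d,0,0)$, or $(d,j,0)$ with $j\le c$, or $(d,c,j)$ with $j\le b$, or $(d,c+j,b)$ with $j\le a$, and every one of these is weakly decreasing since $b\le c\le\nu_2\le d$ and $c+a=\nu_2\le\nu_1=d$. The main obstacle is the middle step: extracting from the block diagrams the exact ribbon shapes and the rows containing their heads, and checking that a type-$1$ block contributes no head below the top row, that a type-$2$ block contributes nothing to the middle-row pass, and that the left-to-right order of the blocks genuinely forces the stated left-to-right order of recorded entries within each row (i.e. the head columns respect the block order, including at the staircase overlaps between consecutive blocks). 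Once the head positions are pinned down, everything else is the elementary counting above.
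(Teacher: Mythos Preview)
Your proof is correct and follows essentially the same approach as the paper's: read off the content and the reading word from the block structure, then verify that $\nu$ is a partition and that the word is Yamanouchi. The paper's own argument is much terser --- it treats the content formula and the explicit reading word as immediate from the pictures and only spells out the inequalities $\nu_1\ge\nu_2\ge\nu_3$ and the suffix checks --- whereas you take the extra (useful) step of explicitly locating the head of each ribbon by row to justify the shape of the reading word.
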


\begin{proof}
We only need to verify that $\nu_1\geq \nu_2\geq \nu_3$, which we have since $\kappa_1+\kappa_2\geq \kappa_4$, and $\nu_1+\nu_2+\nu_3 = 3(\kappa_1+\kappa_2+\kappa_3+\kappa_4+\kappa_5) = 3n$. The fact that the reading work is Yamanouchi is easily verified: the length of the sequence of $1$'s is greater than that of the following sequence of $2$'s (reading right to left) since $\kappa_4\leq \kappa_1+\kappa_2$. The sequence of $3$'s is shorter than that of the first sequence of $2$'s since $\kappa_5$ appears in its length $\kappa_3+2\kappa_4+\kappa_5$. Finally, adding the last sequence of 
$\kappa_2$ $2$'s also does not break the Yamanouchi condition.% for the same reason as for the first sequence of $2$'s.
\end{proof}

We say these $3$-ribbon tableaux are Yamanouchi $3$-ribbon tableaux of shape $(3n)^3$.

\begin{rem}
There are other ways to pave and fill $(3n)^3$ which may also give Yamanouchi words. However, the next proposition seems to confirm that our construction is good.
\end{rem}

Let $k_{i,j}$ be the number of $i$ in row $j$ of a tableau. We use this notation in the proof of the following result.

\begin{prop}
Yamanouchi $3$-ribbon tableaux of shape $(3n)^3$ and content $\nu$ are in bijection with tableaux of shape $\nu$ and content $(n)^3$ counted by the Kostka numbers $K^\nu_{(n)^3}$.
\label{thm:bijRibbTab}
\end{prop}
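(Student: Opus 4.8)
The goal is a bijection between Yamanouchi $3$-ribbon tableaux of shape $(3n)^3$ (built from the five block types with the stated constraints on the $\kappa_i$) and ordinary semistandard tableaux $T\in\SSYT(\nu,(n)^3)$. Since we already know from Proposition~\ref{prop:spanningAllTabOfACertainShape} that $|\SSYT(\nu,(n)^3)| = K^\nu_{(n)^3} = \min(\nu_1-\nu_2,\nu_2-\nu_3)+1$, and the preceding proposition tells us a Yamanouchi $3$-ribbon tableau with block multiplicities $(\kappa_1,\dots,\kappa_5)$ has content $\nu = (3\kappa_1+2\kappa_2+2\kappa_3+\kappa_4+\kappa_5,\ \kappa_2+\kappa_3+2\kappa_4+\kappa_5,\ \kappa_5)$, the cleanest route is: (1) describe the map from a $3$-ribbon tableau to a tableau $T$ of content $(n)^3$; (2) check it lands in $\SSYT(\nu,(n)^3)$ with the correct $\nu$; (3) invert it; and (4) confirm the cardinalities agree by showing that for fixed $\nu$ the number of admissible tuples $(\kappa_1,\dots,\kappa_5)$ is exactly $\min(\nu_1-\nu_2,\nu_2-\nu_3)+1$.

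\textbf{The map.} I would read the block decomposition left to right and let each block contribute its ribbon entries to the rows of $T$ according to which rows the ribbons' heads occupy. Concretely, using the notation $k_{i,j}$ for the number of entries $i$ in row $j$ of $T$: a Type~1 block contributes a single $1$ to row~1; a Type~2 block contributes a $1$ to row~1, a $1$ to row~2 and a $2$ to row~2 — wait, one must instead read off the Stanton–White correspondence carefully. The honest approach is to invoke the Stanton–White bijection between $3$-ribbon tableaux of empty $3$-core and triples of tableaux, specialized so that a $3$-ribbon tableau of shape $(3n)^3$ corresponds to a triple of one-row tableaux of total size $n$; then the Yamanouchi condition on the ribbon tableau translates (via the standard ``Yamanouchi = skew LR'' dictionary) into the condition that the triple fits together as a Littlewood–Richardson filling, i.e. into a single semistandard tableau of content $(n)^3$. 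The five block types are precisely the ``atoms'' of this specialized correspondence, and the inequalities $0\le\kappa_i\le n$, $\kappa_3\le 1$, $\kappa_4\le\kappa_1+\kappa_2$, $\sum\kappa_i=n$ are exactly the constraints making the resulting word $2^{\kappa_2}3^{\kappa_5}2^{\kappa_3+2\kappa_4+\kappa_5}1^{3\kappa_1+2\kappa_2+2\kappa_3+\kappa_4+\kappa_5}$ a genuine reverse-lattice (Yamanouchi) reading word of a tableau.

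\textbf{Matching the counts.} For the surjectivity/injectivity bookkeeping I would argue directly with the $k_{i,j}$. A tableau of content $(n)^3$ and shape $\nu$ with $\ell(\nu)\le 3$ is determined by the data $(k_{2,1},k_{2,2},k_{3,2},k_{3,3})$ subject to $k_{2,1}+k_{2,2}=n$, $k_{3,2}+k_{3,3}=n$, $k_{3,3}=\nu_3$, column-strictness forcing $k_{3,2}\le k_{2,1}$ and $k_{3,3}\le k_{2,2}$, and row lengths $\nu_1=n+k_{2,1}+k_{3,1}$ with $k_{3,1}=0$. Solving, $T$ is pinned down by the single free parameter $k_{2,2}$ (equivalently the number of $2$'s on the second row), which ranges over exactly $\min(\nu_1-\nu_2,\nu_2-\nu_3)+1$ values — matching $K^\nu_{(n)^3}$. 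On the ribbon side, $\nu_3=\kappa_5$ and the linear system for $\nu$ shows the admissible tuples $(\kappa_1,\dots,\kappa_5)$ with fixed $\nu$ form a one-parameter family (e.g. parametrized by $\kappa_4$, with $\kappa_2=\nu_2-\kappa_5-2\kappa_4$, $\kappa_3$ forced by $\kappa_3\le1$, $\kappa_1$ determined by $\sum\kappa_i=n$), whose length is again $\min(\nu_1-\nu_2,\nu_2-\nu_3)+1$; identifying the two one-parameter families as the same monotone chain gives the bijection.

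\textbf{Main obstacle.} The delicate point is translating the Yamanouchi/reverse-lattice condition on the ribbon reading word into semistandardness plus the lattice condition on the associated tableau, i.e. pinning down the exact form of the Stanton–White restriction to shape $(3n)^3$ and verifying that the five block types are complete and non-overlapping (no Yamanouchi $3$-ribbon tableau of this shape lies outside the list, cf.\ the Remark's caveat about other pavings). Once that dictionary is nailed down, everything else is the elementary linear bookkeeping above; so I expect the bulk of the write-up to be spent carefully setting up the block-to-row correspondence and checking that the three inequality constraints on the $\kappa_i$ are equivalent to ``the word is Yamanouchi and the shape is $(3n)^3$.''
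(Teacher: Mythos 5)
Your overall strategy---both sides are one-parameter families over a fixed $\nu$, so exhibit a common parametrization---is exactly the paper's, but your write-up leaves the decisive step unexecuted and contains a bookkeeping error. First, the ``main obstacle'' you flag (pinning down the Stanton--White restriction and checking the five block types exhaust all Yamanouchi $3$-ribbon tableaux of shape $(3n)^3$) is a non-issue for the proposition as stated: the paper \emph{defines} Yamanouchi $3$-ribbon tableaux of shape $(3n)^3$ to be the objects built from those five blocks subject to the constraints on the $\kappa_i$, so the bijection is simply between admissible tuples $(\kappa_1,\dots,\kappa_5)$ and tableaux in $\SSYT(\nu,(n)^3)$. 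No appeal to Stanton--White or to a ``Yamanouchi = LR'' dictionary is needed, and the paper makes none: it writes down the inverse map explicitly, namely $\kappa_5=\nu_3$, $\kappa_2=k_{2,2}-\nu_3$, $\kappa_3=k_{3,2}\bmod 2$, $\kappa_4=\lfloor k_{3,2}/2\rfloor$, $\kappa_1=k_{2,1}-\lceil k_{3,2}/2\rceil$, and checks the only nontrivial constraint $\kappa_4\le\kappa_1+\kappa_2$ reduces to $k_{3,2}+\nu_3\le n$, which holds since $k_{3,2}+\nu_3=n-k_{3,1}$. By deferring the map to an unspecified correspondence you never actually produce the bijection.

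Second, your parametrization of the ribbon side is wrong as written: setting $\kappa_2=\nu_2-\kappa_5-2\kappa_4$ and letting $\kappa_4$ be the free parameter forces $\kappa_3=0$ (since $\nu_2=\kappa_2+\kappa_3+2\kappa_4+\kappa_5$), so you only see roughly half the admissible tuples and the two families you want to identify do not have the same length under your indexing. The correct free parameter is $\kappa_2$ (equivalently $k_{2,2}$, the number of $2$'s on the second row): given $\nu$ and $\kappa_2$, the quantity $\nu_2-\nu_3-\kappa_2=\kappa_3+2\kappa_4$ determines $\kappa_3$ by parity and $\kappa_4$ by integer division, and then $\kappa_1$ is fixed by $\sum_i\kappa_i=n$. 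With that correction your count-matching argument goes through and coincides with the paper's; without it, the claimed identification of ``the same monotone chain'' does not hold.
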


\begin{proof}
For a given $(\kappa_1, \kappa_2, \kappa_3, \kappa_4, \kappa_5)$, the shape $\nu$ is given, as well as $n$. Any tableau of shape $\nu$ and content $(n)^3$ has all its entries $1$ in its first row, and $\nu_3 = \kappa_5$ columns of height $3$ filled with $1,2,3$. The leftover cells are filled with the equal amount $m=n-\nu_3$ of entries $2$ and $3$, so a tableau is entirely determined by the position of those $2$'s and $3$'s in row 1 and 2. So, $\nu$ and $k_{2,2}$ are enough to determine uniquely a tableau of content $(n)^3$.  %If we let $\kappa_2$ denote the number of $2$'s in the second row lying outside of columns, this unique statistic is enough to fully determine the rest of the filling of the associated tableau.\\
 
Starting from a tableau of shape $\nu$ and content $(n)^3$, we can recover $(\kappa_1, \kappa_2, \kappa_3, \kappa_4, \kappa_5)$.
Let
\begin{enumerate}
\item $\kappa_1 = k_{2,1} - \lceil \frac{k_{3,2}}{2} \rceil$,
\item $\kappa_2 = k_{2,2} - \nu_3$,
\item $\kappa_3 = k_{3,2} \ (\text{mod} \ 2)$,
\item $\kappa_4 = \lfloor \frac{k_{3,2}}{2} \rfloor$,
\item $\kappa_5 = \nu_3$.
%\item $\kappa_1$ and $\kappa_4$ be the unique solutions to the system of equations 
%\begin{itemize}
   % \item $\kappa_1-\kappa_4 = n- \nu_2$,
   % \item $\kappa_1+\kappa_4 = n- \kappa_2-\kappa_3-\kappa_5$ (derived from the expression of $\nu_2$ in terms of the $\kappa_i)$,
%\end{itemize}
\end{enumerate}

%which are
%\begin{align*}
%\kappa_1 &= n - \frac{1}{2}(\nu_2 + \kappa_2+\kappa_3+\kappa_5)\\
%\kappa_4 &= \frac{1}{2}(\nu_2 - \kappa_2-\kappa_3-\kappa_5)
%\end{align*}

%These solutions are non-negative integers. This is because $\nu_2 = \kappa_5+\kappa_2+| \{ \text{ entries } 3 \text{ on the second row } \}|$, with $\kappa_3$ being the parity of the latter term, so $\nu_2 + \kappa_2+\kappa_3+\kappa_5$ and $\nu_2 - \kappa_2-\kappa_3-\kappa_5$ are even and positive. \\

We can easily verify that it is a composition of $n$, and that $0 \leq \kappa_3 \leq 1$. It remains to show that $\kappa_4 \leq \kappa_1 + \kappa_2$. This is equivalent to $\lfloor \frac{k_{3,2}}{2} \rfloor \leq k_{2,1} - \lceil \frac{k_{3,2}}{2} \rceil + k_{2,2} - \nu_3$, itself equivalent to $k_{3,2} + \nu_3 \leq k_{2,1}+k_{2,2} = n$. But as $k_{3,2} + \nu_3 = n - k_{3,1}$, this inequality is true.

%First, it is a composition of $n$ by construction. We have $\nu_3 = \kappa_5$, $0\leq \kappa_3\leq 1$ and $\nu_2 = n-\kappa_1+\kappa_4 = (\kappa_1+\kappa_2+\kappa_3+\kappa_4+\kappa_5) -\kappa_1 +\kappa_4 = \kappa_3 +\kappa_2 +2\kappa_4 +\kappa_5$ as wanted. In the same way, we verify that $\nu_1 = 3n-\nu_2-\nu_3$ is expressed as wanted. Since there are $n$ entries $2$ in the tableau, there are at most $n$ of them which can lay on the second line, so $\kappa_2\leq n$. We also have that $\kappa_1+\kappa_4\leq n$, and so each $\kappa_1\leq n$ and $\kappa_4\leq n$. \\

%We still have to show that $\kappa_1+\kappa_2\geq \kappa_4$.

%By hypothesis, $\kappa_1-\kappa_4 = n-\nu_2$, so we can show that we always have $\kappa_4-\kappa_1 = \nu_2-n \leq \kappa_2$. If $\mu$ is the partition obtained from $\nu$ by removing all columns of height $3$, then $\mu\vdash 3m$ for $m = n-\nu_3$. Let's now consider the minimum value of $\kappa_2$ possible for $\nu$ fixed. 

%If $\nu_2\leq n$, then $\mu_2\leq m$, $\mu_1\geq 2m$, and all the $m$ entries $2$ in $\mu$ can lay in the first row. Then the minimal value for $\kappa_2$ is $0$. This gives us $\kappa_2\geq 0 \geq \nu_2-n = \kappa_4-\kappa_1$.

%If $\nu_2> n$, then $\mu_2> m$, $\mu_1< 2m$, and not all $m$ entries $2$ can lay in the first row, only $\mu_1 - m$ of them. Then the minimal value for $\kappa_2$ is $m - (\mu_1-m) = 2m -(3m-\mu_2) = \mu_2 - m$. This gives us $\kappa_2 \geq \mu_2 - m = \nu_2-n = \kappa_4-\kappa_1$.
\end{proof}

\begin{rem}
As we have seen in section~\ref{sec:tableaux}, tableaux of content $(n^3)$ having the same shape are obtained by exchanging $2$'s in row $1$ or 2 with 3's in the other row. In terms of the tuples describing Yamanouchi 3-ribbon tableaux, exchanging a $2$ in row $1$ with a $3$ in row $2$ corresponds to:
\begin{align*}
(\kappa_1,\kappa_2,\kappa_3,\kappa_4,\kappa_5) + (-1,1,1,-1,0) & \text{if } \kappa_3=0;
(\kappa_1,\kappa_2,\kappa_3,\kappa_4,\kappa_5) + (0,1,-1,0,0) & \text{if } \kappa_3=1.
\end{align*}
\end{rem}

This bijection tells us that tableaux counted by $K_{(n)^3}^{\nu}$ are in bijection with compositions of $n$ in $5$ parts $(\kappa_1, \kappa_2, \kappa_3, \kappa_4, \kappa_5)$ that are solutions to a certain equation system.
	
We study these compositions as integer points on polytopes in the next section. We denote the set of solutions (for $\nu$ fixed) by $\text{Yam}_3(3(n^3),\nu)$, and the set of all possible solutions (for any $\nu\vdash 3n$) by $\text{Yam}_3(3(n^3))$. We generally identify these solutions, the corresponding Yamanouchi $3$-ribbon tableaux and tableaux of content $(n)^3$ further on.

\section{Integer points on polytopes}
\label{section:polytopes}

The compositions %of $n$ 
$(\kappa_1,\kappa_2,\kappa_3,\kappa_4,\kappa_5)\in Yam_3((3n)^3)$ can be seen as positive integer points on the polytope $\mathbb{P}_n$ defined by the following equations.
\begin{itemize}
\item $\kappa_1+\kappa_2+\kappa_3+\kappa_4+\kappa_5 = n$, 
\item $\kappa_i\geq 0$,
\item $\kappa_3\leq 1$,
\item $\kappa_1+\kappa_2\geq \kappa_4$.
\end{itemize}

Each point with $\nu = (3\kappa_1+2\kappa_2+2\kappa_3+\kappa_4+\kappa_5, \kappa_2+\kappa_3+2\kappa_4+\kappa_5, \kappa_5)$ is in $Yam_3((3n)^3,\nu)$, and corresponds to a tableau counted by $K_{(n)^3}^\nu$, %This associated tableau is simply 
the one %of shape $\nu$ and filling $(n)^3$ 
with $\kappa_2 + \kappa_5$ $2$'s on its second row. %The associated tableaux are easily retreived from the coordinates of each point as described below.

\subsection{Visualizing $\mathbb{P}_n$}
The polytope $\mathbb{P}_n$ lies in a $5$ dimension space. In order to visualize it, we consider the projection onto $(\kappa_1, \kappa_2, \kappa_4)$ with :
\begin{itemize}
\item $\kappa_1+\kappa_2+\kappa_4 = m$, 
\item $\kappa_i\geq 0$,
\item $\kappa_1+\kappa_2\geq \kappa_4$.
\end{itemize}
This corresponds to the restriction of $\mathbb{P}_n$ with $\kappa_5 = n-m$ and $\kappa_3=0$.
We have seen that compositions of $n$ with a fixed $\kappa_5>0$ are in bijection with the compositions of $m=n-\kappa_5$ (corresponding to removing columns of height 3), so we do not lose information by considering this projection, because we know that adding such a column conjugates the type.

Moreover, when $\kappa_3=1$, we can still consider the points on the same object, since the points simply lie in the "inner" polytope with $\kappa_1+\kappa_2+\kappa_4=m-1$.\\%$\kappa_5 = n-m-1$. \\

Therefore we can build the union of polytopes $P_m$ consisting of all points of $\mathbb{P}_n$ such that $\kappa_5 = n-m-\kappa_3$, with either $\kappa_3=0$ or $\kappa_3=1$. All points of $\mathbb{P}_n$ occur this way, so visualizing all the $P_m$, for $0\leq m \leq n$, give us a good understanding of $\mathbb{P}_n$. Further on, we oversimplify the situation by calling these $P_m$ polytopes.\\ %further on to simplify the text, but keep in mind they are actually the union of two subpolytopes of $\mathbb{P}_n$.\\

See figure~\ref{fig:P_n} for the first $P_m$, for $m=0,1,2,3,4,5,6$. Together, they give projections of $\mathbb{P}_6$ at $\kappa_5 = 6,5,4,3,2,1,0$. Figure~\ref{fig:P_nSide} illustrates the fact that $P_m$ is the union of two polytopes.  \\

\begin{figure}
\hspace{-4em}
\begin{minipage}{0.4\textwidth}
\centering
\includegraphics[width=0.99\linewidth]{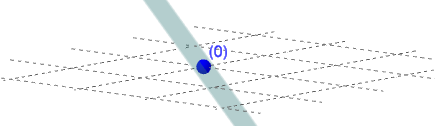}
\end{minipage}%
\begin{minipage}{0.4\textwidth}
\centering
\includegraphics[width=0.99\linewidth]{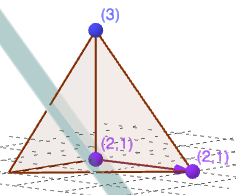}
\end{minipage}%
\begin{minipage}{0.4\textwidth}
\centering
\includegraphics[width=0.99\linewidth]{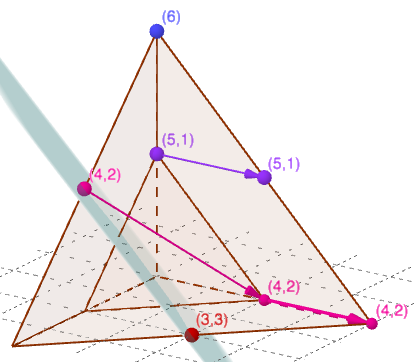}
\end{minipage}

\vspace{-2em}
\hspace{-4em}
\begin{minipage}{0.4\textwidth}
\centering
\includegraphics[width=0.99\linewidth]{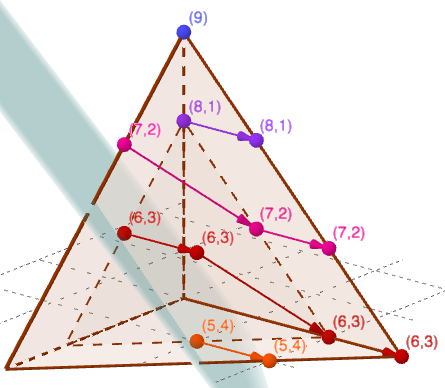}
\end{minipage}%
\begin{minipage}{0.4\textwidth}
\centering
\includegraphics[width=0.99\linewidth]{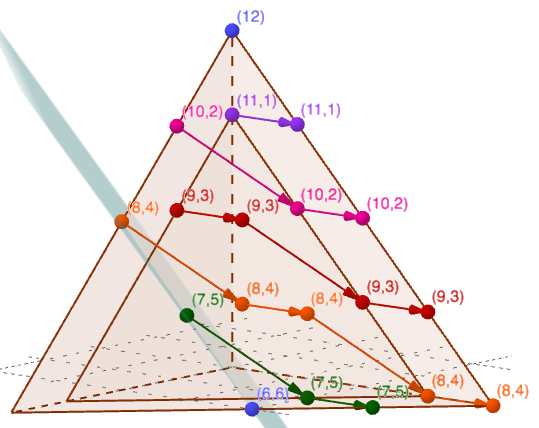}
\end{minipage}%
\begin{minipage}{0.4\textwidth}
\centering
\includegraphics[width=0.99\linewidth]{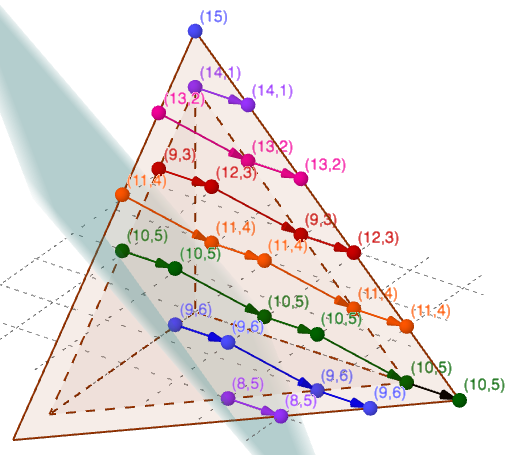}
\end{minipage}%

\vspace{-1.5em}
\hspace{11em}
\begin{minipage}{0.45
\textwidth}
\centering
\includegraphics[width=0.99\linewidth]{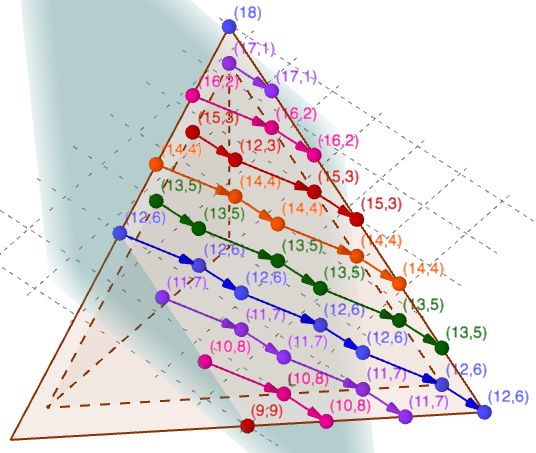}
\end{minipage}%
%\hspace{0.5em}
\begin{minipage}{0.35
\textwidth}
\centering
\includegraphics[width=0.99\linewidth]{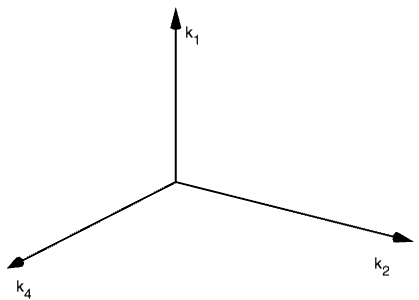}
\end{minipage}%

\caption{Projections $P_0, P_1, P_2, P_3, P_4, P_5, P_6$ of $\mathbb{P}_6$ at $\kappa_5= 6,5,4,3,2,1,0$. %The $x,y,z$ axis give respectively $\kappa_2,\kappa_4$ and $\kappa_1$, and 
The shaded plane is $\kappa_1+\kappa_2=\kappa_4$.
The partition strands are colored identically from one figure to the next if they occur as a shift of $\kappa_1$ by $1$. This allows to visualize how $P_i\subset P_{i+1}$.  }
\label{fig:P_n}
\end{figure}

\begin{figure}
	\centering
		\includegraphics[width=0.2\linewidth]{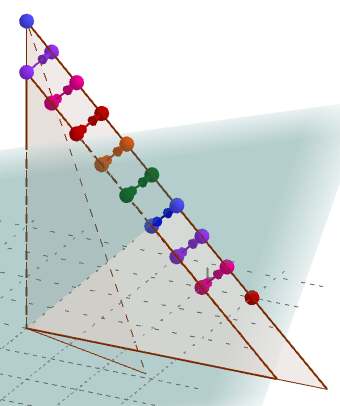}

	\caption{Side cut of $P_6$ which illustrates the fact that $P_m$ is the union of two polytopes, an inner one with $\kappa_3=1$, and an outer one with $\kappa_3=0$. Integer points of both polytopes of the same $j$-strand lie on the line of direction $(-1,+2,-1)$, by the geometry of transformations.}
	\label{fig:P_nSide}
\end{figure}

In the appendix, we give a closed formula for the number of integer points in both $P_m$ and $\mathbb{P}_n$. This is of interest, since they respectively give the number of tableaux of content $(n)^3$ with at most two rows, and the number of tableaux of content $(n)^3$ of any shape.\\% These in turn give respectively the sum of coefficients of Schur functions associated to $1$- and $2$-part partitions in the decomposition of $h_n^3$, and the sum of all coefficients of Schur functions appearing in that decomposition.%\\

%As we have seen that tableaux of filling $(n)^3$ and of any shape $\nu\vdash 3n$ are in bijection with integer points on $\mathbb{P}_n$, then we have that the following result.

%\begin{cor}
%The number $\sum_\nu K_{(n)^3}^\nu$ of tableaux of filling $(n)^3$ and of any shape $\nu\vdash 3n$ is given by the number of integer points on $\mathbb{P}_n$.%, where $m = n-\nu_3$ for all possible values of $\nu_3$.
%\end{cor}

We can focus on tableaux of content $(n)^3$ with at most two parts, the ones who appear in $P_n$. %In other words, we consider $\kappa_5=\nu_3=0$.
A partition of $(3n)$ with at most two parts is of the form $(3n-j,j)$, for $0 \leq j \leq \lfloor \frac{3n}{2} \rfloor$. We call the collection of points corresponding to tableaux of that shape in $P_n$ the $j$-\textit{strand}. They correspond to points $(\kappa_1,\kappa_2,\kappa_3,\kappa_4)$ such that $\kappa_2+\kappa_3+2\kappa_4 = j$. 

As we have seen in section~\ref{section:ribTab}, the transformations between points which preserve the associated partition are $(\kappa_1,\kappa_2,\kappa_3,\kappa_4)\pm(-1,+1,+1,-1)$ or $(\kappa_1,\kappa_2,\kappa_3,\kappa_4)\pm(0,+1,-1,0)$, depending on the value of $\kappa_3$. Therefore the points giving $(3n-j,j)$ are linked by these transformations, and form an actual strand.
These transformations correspond precisely to exchanging a $2$ on the first row with a $3$ on the second row of the associated tableaux.

\subsection{Inclusion of polytopes, complete and incomplete strands}

\begin{prop} The polytope $P_{n-1}$ is included in $P_n$ for $n\geq 1$, with the strands of $\nu\vdash 3(n-1)$ becoming that of $\nu + (3) = (\nu_1+3,\nu_2,\nu_2)\vdash 3n$.
\label{prop:PolytopeInclusion} \end{prop}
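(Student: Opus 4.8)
The plan is to exhibit an explicit bijection between the integer points of $P_{n-1}$ and a subset of the integer points of $P_n$, and to check that this bijection sends each $\nu$-strand to the $(\nu+(3))$-strand. The natural candidate is the map $(\kappa_1,\kappa_2,\kappa_3,\kappa_4)\mapsto(\kappa_1+1,\kappa_2,\kappa_3,\kappa_4)$, i.e.\ incrementing $\kappa_1$ by one (equivalently adding one block of type $1$). Recall from the previous section that a block of type $1$ contributes $(3,0,0)$ to the content $\nu$, so this map sends a point with content $\nu=(\nu_1,\nu_2,\nu_3)$ to a point with content $(\nu_1+3,\nu_2,\nu_3)=\nu+(3)$. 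Since $\nu_3=\kappa_5$ is unchanged and $\kappa_5 = (n-1)-m-\kappa_3 = n-(m+1)-\kappa_3$, the image indeed lives in $P_n$ at the same "level".

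First I would verify that the map is well-defined: if $(\kappa_1,\kappa_2,\kappa_3,\kappa_4)$ satisfies the defining inequalities of $P_{n-1}$ — namely $\kappa_i\geq 0$, $\kappa_3\leq 1$, $\kappa_1+\kappa_2\geq\kappa_4$, and $\kappa_1+\kappa_2+\kappa_3+\kappa_4+\kappa_5 = n-1$ with $\kappa_5=n-1-m-\kappa_3$ — then $(\kappa_1+1,\kappa_2,\kappa_3,\kappa_4)$ satisfies those of $P_n$: non-negativity is clear, $\kappa_3\leq 1$ is untouched, $(\kappa_1+1)+\kappa_2\geq\kappa_4$ follows from $\kappa_1+\kappa_2\geq\kappa_4$, and the total becomes $n$. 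Then I would check injectivity (immediate, since the map is a translation) and that the image consists precisely of the points of $P_n$ with $\kappa_1\geq 1$; this is the description of which tableaux of content $(n)^3$ "come from" those of content $(n-1)^3$ by adding a type-$1$ block. Finally I would note that the strand-preserving property is exactly the content computation above, together with the observation from Section~\ref{section:ribTab} that the two strand-traversal transformations $(\kappa_1,\kappa_2,\kappa_3,\kappa_4)\pm(-1,+1,+1,-1)$ and $\pm(0,+1,-1,0)$ commute with adding $1$ to $\kappa_1$, so a whole strand maps to a whole strand.

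For the statement as phrased — $P_{n-1}\subseteq P_n$ — strictly speaking one should be careful, since the $P_m$ live in spaces of nominally the same coordinates $(\kappa_1,\kappa_2,\kappa_4)$ but at different levels $\kappa_5$; I would make precise that "inclusion" here means inclusion of the corresponding slices of $\mathbb{P}_n$, i.e.\ that each integer point of $\mathbb{P}_{n-1}$ at level $\kappa_5$ corresponds (via the identification already set up in the text, where $P_m$ is drawn as the projection onto $(\kappa_1,\kappa_2,\kappa_4)$) to an integer point of $\mathbb{P}_n$, realized concretely by the translation $\kappa_1\mapsto\kappa_1+1$. This matches the coloring convention in Figure~\ref{fig:P_n}, where strands are colored identically across figures precisely when they differ by a shift of $\kappa_1$ by $1$.

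The main obstacle is not any hard computation but rather bookkeeping: making sure the identification between the abstract polytope $\mathbb{P}_n$ and the visualized $P_m$ is handled cleanly, and in particular keeping track of the $\kappa_3\in\{0,1\}$ dichotomy (the "inner" vs "outer" polytope of Figure~\ref{fig:P_nSide}) so that the shift $\kappa_1\mapsto\kappa_1+1$ respects both layers simultaneously. Once the map is pinned down, every verification is a one-line inequality or a direct substitution into the formula $\nu = (3\kappa_1+2\kappa_2+2\kappa_3+\kappa_4+\kappa_5,\ \kappa_2+\kappa_3+2\kappa_4+\kappa_5,\ \kappa_5)$.
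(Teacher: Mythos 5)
Your proposal is correct and follows essentially the same route as the paper: the paper's proof simply observes that the inclusion is realized by the operation $T\vee\young(123)$ from the section on general rules, which shifts every point of $P_{n-1}$ by adding $1$ to $\kappa_1$, exactly the translation you describe. Your additional verifications (preservation of the defining inequalities, injectivity, the content computation giving $\nu+(3)$, and compatibility with the strand-traversal moves) are all routine and consistent with what the paper leaves implicit.
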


\begin{proof}
This injection corresponds to the operation $T\vee \young(123)$ defined in section~\ref{section:general}. This gives a shift to all points of $P_{n-1}$ by adding 1 to $\kappa_1$.  
\end{proof}

%Let's see what this means in terms of tableaux and Yamanouchi $3$-ribbon tableaux.%, since the points of $P_n$ are in bijection with tableaux with filling $(n)^3$ on at most two rows, and with Yamanouchi $3$-ribbon tableaux of shape $(3n)^3$ and filling $(3n-j,j)$, for $0\leq j \leq \lfloor \frac{3n}{2}\rfloor$. 

%In terms of tableaux of content $(n)^3$, this injection is obtained by adding three cells with entries $1,2,3$ in the first row of every tableau of $P_{n-1}$. Most tableaux of $P_n$ are obtained in this way, but not all of them. We can similarly go from $P_{n}$ to $P_{n-1}$ by removing three cells with content $1,2,3$ from the first row of tableaux when possible: the obtained tableau must be semistandard and partition shaped. 
%
%On $3$-ribbon Yamanouchi tableaux, we get this inclusion by adding a block of type $1$ to every Yamanouchi $3$-ribbon tableau of $Yam_3((3(n-1))^3)$, which gives a (here vertical) shift to all points of $P_{n-1}$ by $\kappa_1'=\kappa_1+1$. And similarly, the reverse can be done by removing $1$ to $\kappa_1$ to all points in $P_{n}$ (when possible).
 %The strand of $\nu\vdash 3(n-1)$ then becomes the stand of $\nu + (3) = (\nu_1+3,\nu_2,\nu_2)\vdash 3n$. The number of such points correspond to the first part of the formula, as we will see below. 

A $j$-strand is called complete when %adding three cells in the first row 
this injection of $P_n$ into $P_{n+1}$ does not modify the number of tableaux it contains. The complete $j$-strands of $P_n$ are associated to partitions $\nu=(3n-j,j)$ with $j\leq n$, and correspond to the case where $K_{(n)^3}^{\nu} %= \nu_2-\nu_3+1 
= j+1$. The initial point of a complete $j$-strand corresponds to the tableau which only have $3$'s on the second row, %(and so maximizes the number of $2$'s on the first row)
and its final point, to the one which only have $2$'s on the second row.\\

For $n<j\leq \lfloor\frac{3n}{2}\rfloor$, the $j$-strands of $P_n$ are incomplete. There are three tableaux in the $j$-strand of $P_{n+1}$ which are not obtained from tableaux of $P_{n}$ by adding three cells with filling $1,2,3$ in their first row: the first tableau of the $j$-strand which has no three on it's first row, the last one which has no two's on it's first row, and the previous to last one, which becomes non standard when removing three cells with filling $1,2,3$ in its first row. See figure~\ref{fig:ReductionTab3cellules1eLigne}.\\ %The incomplete $j$-strands in $P_n$ %are associated to partitions $\nu=(3n-j,j)$ with $j>n\geq \lfloor \frac{3n}{2}\rfloor$, and 
%correspond to the case $\delta_\nu = \nu_1-\nu_2+1 = 3n-2j+1 = j+1-3(j-n)$. \\ \\

Going from $P_n$ to $P_{n-1}$ either preserve the $j$-strands (if $j\leq n$) or removes three tableaux to them: one at the beginning of each $j$-strand, and two at their end. The inclusion of $P_{n-1}$ in $P_n$ also justifies the notation of $j$-strand, since it doesn't modify the second part $j$ of the associated partitions, the strands depend only on the values of $j$ and $n$.

\begin{rem}
    The three tableaux which are removed from the $j$-stand when going from $P_n$ to $P_{n-1}$ are removed because they do not have the form $\bar{T} \vee \young(123)$.
\end{rem}

\begin{figure}[h!]
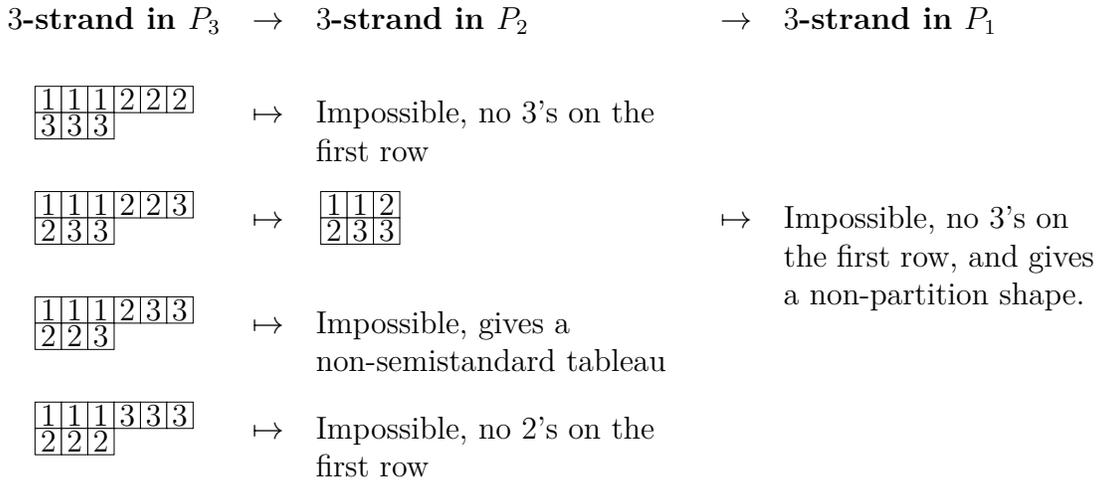

	\vspace{-1em}
	\begin{center}
		\begin{tabular}{cclcl}
			\textbf{$3$-strand in $P_3$} & $\rightarrow $ & \textbf{$3$-strand in $P_2$} & $\rightarrow $ & \textbf{$3$-strand in $P_1$}\\
			&&&&\\
			\young(111222,333) & $\mapsto$ & \multirow{2}{12em}{Impossible, no $3$'s on the first row}&&\\
			&&&&\\
			\young(111223,233) & $\mapsto$ & \young(112,233)& $\mapsto$ & \multirow{3}{10em}{Impossible, no $3$'s on the first row, and gives a non-partition shape.}\\
			&&&&\\
			\young(111233,223) & $\mapsto$ & \multirow{2}{12em}{Impossible, gives a non-semistandard tableau}&&\\
			&&&&\\
			\young(111333,222) & $\mapsto$ & \multirow{2}{12em}{Impossible, no $2$'s on the first row}&&\\
			&&&&\\
		\end{tabular}
	\end{center}
	\vspace{-1em}
	\caption{Reduction of the $3$-strand from $P_3$ to $P_1$. The $3$-strand is complete in $P_3$, and counts $4$ tableaux. The $3$-strand in $P_2$ counts only $1$ tableau, and it vanishes in $P_1$.}
	\label{fig:ReductionTab3cellules1eLigne}
\end{figure}

\begin{comment}
The inclusion (and projection) described above give the following.

\begin{prop} Let $j$ be the fixed second part of two-line partitions $(3N-j,j)$.\\

For $N\geq j$, we have

\vspace{-2.5em}
\[\langle h_N^3, s_{(3N-j,j)} \rangle = \langle h_j^3, s_{(2j,j)} \rangle = j+1.\]

For $\frac{2j-1}{3} < N\leq j$,

\vspace{-2.5em}
\[\langle h_N^3, s_{(3N-j,j)} \rangle = j+1-3(j-N).\] 

For $N< \frac{2j-1}{3}$, 

\vspace{-2.5em}
\[\langle h_N^3, s_{(3N-j,j)} \rangle = 0.\]
\label{prop:stabilityCondPn}
\end{prop}

\begin{proof}
The $j$-strand is complete for $N\geq j$, since adding three cells in the first row keeps $j = \min(3N-j,j)$, so $\delta_{(3N-j,j)} = \delta_{(2j,j)} = j+1$, for all $N\geq j$, which gives the first expression.

From the discussion above, we know that going from $P_N$ to $P_{N-1}$, for $N\leq j$, will remove $3$ tableaux from the (incomplete) $j$-strand, until it vanishes at a certain threshold value. We have $\delta_{(2j,j)}-3(N-j)>0$ if and only if $N<\frac{2j}{3}$, which gives us the threshold value at which the $j$-strand vanishes (or appears). This gives the two last expressions.
\end{proof}
\end{comment}

\begin{prop}
	An (incomplete) $j$-strand first appears in $P_{\lceil \frac{2j}{3}\rceil}$ %(if $j\equiv_6\{0,1,3,4\}$) or $P_{\lceil \frac{2j-1}{3}\rceil+1}$ (if $j\equiv_6\{2,5\}$)
	, grows until complete, and remains complete for $P_n$ with $n\geq j$.
	It holds respectively $1$, $2$ or $3$ tableaux in its first occurrence, depending on whether $(j\mod 3) = 0, (j\mod 3) = 1$ or $(j\mod 3) = 2$, the first one being such that $k_{3,2}=\lceil \frac{2j}{3}\rceil$ and $k_{2,2}=\lfloor \frac{j}{3}\rfloor$.
\end{prop}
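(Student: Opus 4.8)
The plan is to deduce the whole statement from the Kostka count of Proposition~\ref{prop:spanningAllTabOfACertainShape} together with the inclusion $P_{n-1}\subset P_n$ of Proposition~\ref{prop:PolytopeInclusion}, and then to pin down the first tableau of the strand by writing out its two rows explicitly.

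First I would translate everything into Kostka numbers. A point of the $j$-strand of $P_n$ corresponds to a tableau of shape $(3n-j,j)$ and content $(n)^3$, so the $j$-strand of $P_n$ contains exactly $K^{(3n-j,j)}_{(n)^3}=\min(3n-2j,\,j)+1$ tableaux. I would then split on the sign of $(3n-2j)-j=3(n-j)$. For $n\geq j$ this count equals $j+1$ and is independent of $n$; since the inclusion of Proposition~\ref{prop:PolytopeInclusion} sends the partition $(3n-j,j)$ to $(3(n+1)-j,j)$, the count is preserved, which is precisely the definition of the $j$-strand being complete — this gives the last clause. For $n<j$ the count is $3n-2j+1$, a strictly increasing function of $n$ which is positive iff $3n\geq 2j$, i.e. iff $n\geq\lceil 2j/3\rceil$. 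Hence the $j$-strand is empty in every $P_n$ with $n<\lceil 2j/3\rceil$, first appears in $P_{\lceil 2j/3\rceil}$, and grows by three tableaux with each unit increase of $n$ until it stabilises at $n=j$. Writing $j=3q+r$ with $r\in\{0,1,2\}$, one has $n_0:=\lceil 2j/3\rceil\in\{2q,2q+1,2q+2\}$ and correspondingly $3n_0-2j+1\in\{1,2,3\}$ according to $r\in\{0,1,2\}$: this is the count in the first occurrence.

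Next I would identify the first tableau of the strand in $P_{n_0}$ and read off $k_{2,2}$ and $k_{3,2}$. For $n<j$, any tableau $T$ of shape $(3n-j,j)$ and content $(n)^3$ has first row $1^n2^a3^b$ with $a+b=2n-j$ and second row $2^c3^d$ with $c+d=j$, subject to $a+c=n$ (total number of $2$'s). Since $j>n$, the last $j-n$ cells of the second row sit below first-row cells that are $\geq 2$; strictness of those columns rules out a $3$ of the first row there (it could not be exceeded below) and forces a $3$ in the second row, so $a\geq j-n$, equivalently $b=n-j+c\geq0$, and $c$ runs over $\{j-n,\dots,2n-j\}$ (a range of $3n-2j+1$ values), consecutive values being linked by exchanging a $2$ on the first row for a $3$ on the second — exactly the strand structure recalled in Section~\ref{section:polytopes}. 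The initial tableau of the strand is the one minimising $c$, i.e. the one with $b=0$ (``no $3$ on the first row''), for which $c=j-n$; thus it has $k_{2,2}=c=j-n$ and $k_{3,2}=d=j-c=n$. Specialising to $n=n_0=\lceil 2j/3\rceil$ gives $k_{3,2}=\lceil 2j/3\rceil$ and $k_{2,2}=j-\lceil 2j/3\rceil$, and the residue computation $j=3q+r$ shows $j-\lceil 2j/3\rceil=q=\lfloor j/3\rfloor$ in each of the three cases.

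The genuinely non-routine point — everything else being the modular arithmetic and the two-row bookkeeping above — is making sure the description of the strand's endpoints used here agrees with the one already fixed in the paper, namely that the parameter increasing along the $j$-strand is $k_{2,2}$ and that its minimal admissible value is $\max(0,j-n)$. This is the same forced lower-left overlap of the two rows as in Proposition~\ref{prop:spanningAllTabOfACertainShape} and Figure~\ref{fig:ReductionTab3cellules1eLigne}, so the main obstacle is notational consistency rather than a new idea.
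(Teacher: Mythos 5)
Your argument is correct and reaches the statement by essentially the same route as the paper: the Kostka count $\min(3n-2j,j)+1$ governs when the strand appears, how fast it grows, and when it stabilises, and the first tableau is pinned down by the value $k_{2,2}=j-n$ at $n=\lceil 2j/3\rceil$. The only cosmetic difference is that you re-derive the minimal $k_{2,2}=j-n$ directly from column-strictness of the shape $(3n-j,j)$, whereas the paper obtains the same second row by counting the $\lfloor j/3\rfloor$ exchanges separating it from the first tableau (with no $2$'s on row two) of the complete strand under the inclusion $P_{n-1}\subset P_n$; both computations agree.
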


\begin{proof}
%We have seen that a $j$-strand is complete in $P_n$, for $n\geq j$, and holds $K^{(2j,j)}_{(j)^3} = j+1$ tableaux. Going from $P_n$ to $P_{n-1}$, starting with $n=j$, each time removes $3$ tableaux from the strand, until doing this once more eliminates the strand. If $j=3k$, then it holds $1$ tableau at its first occurrence in $P_{\lceil \frac{2j}{3} \rceil}$. If $j=3k+1$, then it holds $2$, and if $j=3k+2$, then it holds $3$. \\

Tableaux in different $P_n$ which have the same second row correspond to the same points of the $j$-strand, using the inclusion. In a complete $j$-stand, the second row of the first tableau has no $2$'s. After $\lfloor \frac{j}{3}\rfloor$ exchanges, the second row holds $\lfloor \frac{j}{3}\rfloor$ $2$'s and $\lceil \frac{2j}{3} \rceil$ $3$'s, so this is the second row of the first tableau of the $j$-strand at its first occurrence.
\end{proof}

We have seen in section~\ref{section:general} that adding three cells with entries $1,2,3$ to the first row does not change the type. If a tableau first appears in $P_n$, we can keep the same type for the corresponding tableau in $P_{N}$ for $N > n$.

\section{Types for tableaux of content $(n)^3$}\label{section:Types}

We can use results of previous sections to define types for points in $\mathbb{P}_n$. We say $(\kappa_1,\kappa_2,\kappa_3,\kappa_4,\kappa_5)\in Yam_3{((3n)^3)} \subset \mathbb{P}_n$ is of type $\nu$ if and only if the corresponding tableau has type $\nu$.\\

By proposition~\ref{prop:columns}, we know that the type of $(\kappa_1,\kappa_2,\kappa_3,\kappa_4,\kappa_5)$ is:

\begin{itemize}
    \item The same as the one of $(\kappa_1,\kappa_2,\kappa_3,\kappa_4,0)$ if $\kappa_5$ even;
    \item The conjugate of the one of $(\kappa_1,\kappa_2,\kappa_3,\kappa_4,0)$ if $\kappa_5$ odd.
\end{itemize}

%Let's starts with the change in plethystic attribution forced when adding one column of height $3$, which follows from proposition~\ref{prop:columns}.

\begin{comment}
\begin{prop}
	The inclusion of $\mathbb{P}_n$ in $\mathbb{P}_{n+1}$ is obtained by modifying $\kappa_5'=\kappa_5+1$ for all points of $\mathbb{P}_n$, which corresponds to adding a column of height $3$ to all corresponding tableaux. Then all plethystic attributions of $\mathbb{P}_n$ change %in the following way : 
	\begin{center}
	\begin{tabular}{ccc}
		$\mathbb{P}_n$ & $\hookrightarrow$ & $\mathbb{P}_{n+1}$\\
		$(\kappa_1,\kappa_2,\kappa_3,\kappa_4,\kappa_5)\in P_m$ is associated to & & $(\kappa_1,\kappa_2,\kappa_3,\kappa_4,\kappa_5+1)\in P_{m}$ is associated to \\
		$s_{3}[h_n]$ & $\mapsto$ & $s_{111}[h_{n+1}]$\\
		$s_{111}[h_n]$ & $\mapsto$ & $s_{3}[h_{n+1}]$\\
		$s_{21}[h_n]$ (copy 1) & $\mapsto$ & $s_{21}[h_{n+1}]$ (copy 2)\\
		$s_{21}[h_n]$ (copy 2) & $\mapsto$ & $s_{21}[h_{n+1}]$ (copy 1)\\
	\end{tabular}
	\end{center}
\end{prop}
\end{comment}

We can now consider the type conditions the points of $P_n$ must satisfy to agree with the results of previous sections. Proposition~\ref{prop:PolytopeInclusion} tells us we can restrict ourselves to $P_n$ in $\mathbb{P}_n$.\\ %Further on, we identify the integer points of $P_n$ and the corresponding tableaux.

Certain type attributions are determined, but others require choices. We discuss both, and highlight which choices appear to be the simplest and should therefore be used.

\subsection{Determined type attributions}

The next lemma tells us about $2-$subtypes. Recall that $\young(123)$ and $\young(12,3)$ have $2$-subtype $\young(12)$, and $\young(13,2)$ and $\young(1,2,3)$ have $2$-subtype $\young(1,2)$.

\begin{lem}
Along a $j$-strand of $P_n$, the types have alternatingly $2$-subtype $\young(12)$ and $\young(1,2)$.

%the sets of types $\left\{ \young(123), \young(12,3)\right\}$ and $\left\{ \young(13,2), \young(1,2,3)\right\}$.
	
The first tableau of any \emph{complete} $j$-stands of $P_n$ ($j\leq n$) has $2$-subtype $\young(12)$, %type either $\young(123)$ or $\young(12,3)$, 
and its last tableau, $2$-subtype
\vspace{-1.5em}
\begin{center}
$\left\{
\begin{tabular}{cc}
$\young(12)$ & if $j$ even\\
$\young(1,2)$ & if $j$ odd\\
\end{tabular}\right.$.
\end{center}

The first tableau of any \emph{incomplete} $j$-stands of $P_n$ ($j> n$) has $2$-subtype
\begin{center}
$\left\{
\begin{tabular}{cc}
$\young(12)$ & if $j\equiv_2 n$ \\
$\young(1,2)$ & if $j\not\equiv_2 n$\\
\end{tabular}\right.$,
\end{center}
and its last tableau, $2$-subtype
\vspace{-2em} 
\begin{center}
$\left\{
\begin{tabular}{cc}
$\young(12)$ & if $j$ even\\
$\young(1,2)$ & if $j$ odd\\
\end{tabular}\right.$.
\end{center}

\end{lem}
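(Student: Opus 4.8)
The plan is to track the $2$-subtype of a tableau along a $j$-strand via the parity of the number of $2$'s on its second row, as established in Section~\ref{sec:tableaux}: a tableau has $2$-subtype $\young(12)$ iff its second row has an even number of $2$'s, and $\young(1,2)$ iff it is odd. Writing $k_{2,2}$ for the number of $2$'s in the second row and $k_{3,2}$ for the number of $3$'s there, recall that the transformations moving along a $j$-strand are exactly the exchanges of a $2$ on the first row with a $3$ on the second row, described in Section~\ref{section:ribTab} and Section~\ref{section:polytopes} by $(\kappa_1,\kappa_2,\kappa_3,\kappa_4)\pm(-1,+1,+1,-1)$ (when $\kappa_3=0$) or $\pm(0,+1,-1,0)$ (when $\kappa_3=1$). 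In either case one $3$ on the second row is replaced by one $2$ on the second row (the $2$ coming down from row~$1$, the $3$ going up), so $k_{2,2}$ increases by exactly $1$ at each step. Hence the parity of $k_{2,2}$ alternates along the $j$-strand, which immediately gives the first assertion.

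**First/last tableau of a complete strand.** For the endpoints I would use the explicit description of the extremal tableaux from Section~\ref{section:polytopes}: in a complete $j$-strand ($j\le n$), the first tableau is the one whose second row has \emph{no} $2$'s, i.e. $k_{2,2}=0$, which is even, so its $2$-subtype is $\young(12)$. The last tableau is the one whose second row has \emph{no} $3$'s; since the second row of a two-part shape $(3n-j,j)$ has length $j$ and (outside columns of height~$3$, of which there are none here) is filled entirely with $2$'s in that extremal tableau, we get $k_{2,2}=j$. So the $2$-subtype of the last tableau is $\young(12)$ if $j$ is even and $\young(1,2)$ if $j$ is odd, as claimed. (Equivalently: the strand has $j+1$ tableaux, so $k_{2,2}$ runs from $0$ to $j$, and the parity of the last one is that of $j$.)

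**First/last tableau of an incomplete strand.** For incomplete $j$-strands ($j>n$), the last tableau is again the one with $k_{2,2}=j$ on its second row — wait, here we must be careful: for an incomplete strand the second row has length $j$ but $j>n$ forces at least $j-n$ of those cells to be $2$'s sitting over $3$'s on row~$1$ (the constraint from Proposition~\ref{prop:spanningAllTabOfACertainShape}); still, the \emph{last} tableau maximizes $2$'s on the second row and is the one with all $j$ entries equal to $2$, so $k_{2,2}=j$ and the parity argument gives $\young(12)$ for $j$ even, $\young(1,2)$ for $j$ odd, identical to the complete case. For the first tableau of an incomplete strand I would invoke the preceding proposition: the incomplete $j$-strand first appears in $P_{\lceil 2j/3\rceil}$ with its initial tableau having $k_{2,2}=\lfloor j/3\rfloor$, and then in $P_n$ for larger $n$ the first tableau is obtained by peeling off complete-strand behavior — more directly, the number of tableaux in the $j$-strand of $P_n$ (for $\frac{2j-1}{3}<n\le j$) is $j+1-3(j-n) = 3n-2j+1$, so $k_{2,2}$ of the first tableau equals $j-(3n-2j) = 3j-3n$... then reduce mod~$2$: $3j-3n \equiv j-n \equiv j+n \pmod 2$, giving $2$-subtype $\young(12)$ when $j\equiv_2 n$ and $\young(1,2)$ when $j\not\equiv_2 n$, exactly the stated result.

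**Main obstacle.** The routine part is the parity bookkeeping; the delicate point I expect to need the most care is pinning down $k_{2,2}$ for the \emph{first} tableau of an incomplete strand — i.e. correctly counting how many tableaux the incomplete $j$-strand of $P_n$ contains and which extremal configuration is its ``first'' point under the polytope inclusion $P_{n-1}\hookrightarrow P_n$ of Proposition~\ref{prop:PolytopeInclusion}. I would double-check this against the description in Figure~\ref{fig:ReductionTab3cellules1eLigne} of exactly which three tableaux are removed when passing from $P_n$ to $P_{n-1}$ (one at the start, two at the end of each strand), since getting the count of removed start-tableaux right is what determines the parity shift of $k_{2,2}$ at the strand's first point.
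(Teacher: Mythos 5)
Your overall strategy is the same as the paper's: reduce everything to the parity of the number $k_{2,2}$ of $2$'s on the second row, note that each exchange along a $j$-strand changes $k_{2,2}$ by exactly $1$ (which gives the alternation), and then compute $k_{2,2}$ at the two endpoints of the strand. The alternation argument and the complete-strand case are correct and coincide with the paper's proof.

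Your treatment of the incomplete strands, however, contains a concrete error. The last tableau of an incomplete $j$-strand ($j>n$) cannot have all $j$ cells of its second row equal to $2$: the entire tableau contains only $n<j$ entries equal to $2$. The correct maximum is $k_{2,2}=2n-j$, because the $j-k_{2,2}$ threes on the second row must be covered by the $n$ ones and the $2$'s of the first row, forcing at least $j-n$ twos to remain on the first row and hence $k_{2,2}\le n-(j-n)=2n-j$; this is the value the paper uses. Consequently your value $3j-3n$ for the first tableau is also wrong: the correct value is $j-n$, obtained either from the lower bound $k_{2,2}\ge j-n$ or by subtracting the $3n-2j$ exchange steps from $2n-j$. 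You are rescued only by the coincidence that $j\equiv 2n-j$ and $3(j-n)\equiv j-n$ modulo $2$, so the stated $2$-subtypes come out right; but as written the proof asserts extremal configurations that do not exist (compare the $(5,4)$-strand for $n=3$, whose two tableaux have $1$ and $2$ twos on their second rows, not $3$ and $4$). Replacing ``$k_{2,2}=j$'' by ``$k_{2,2}=2n-j$'' and ``$3j-3n$'' by ``$j-n$'' repairs the argument and makes it identical to the paper's.
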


\begin{proof}
The alternance between $2$-subtypes follows from the fact %shown in section~\ref{section:tableaux}, 
that the parity of the number of $2$'s on the second row of tableaux determines the $2$-subtype.\\
	
In the case of complete $j$-strands, the first tableau have no entry $2$ on its second row. % ($\kappa_2=0$). 
Then its $2$-subtype is \young(12).
For the last tableau, it has $j$ entries $2$ on it's second row. Therefore its $2$-subtype depends only on the parity of $j$.\\

In the case of incomplete $j$-strands, the first tableau has $j-n$ entries $2$ on its second row. Therefore, the number of $2$'s on the second row is even when $j$ and $n$ have the same parity, and odd otherwise, which gives us the $2$-subtypes as noted.
The last tableau has $\kappa_2 = 2n-j$ entries $2$ on its second row, so its $2$-subtype depends only on the parity of $j$.
\end{proof}

\begin{comment}
\begin{rem}
The types of the last tableaux of the $j$-strand means there is an alternance between $2-$subtypes $\young(12)$ and $\young(1,2)$ along the lines $\kappa_1+\kappa_2=n$ and $\kappa_2+\kappa_4=n$, which agree at their junction at the strand associated to $(2n,n)$. This is seen in both constructions below.
\end{rem}
\end{comment}

%We now consider the inclusion of $P_m$ in $P_n$, for $0\leq m\leq n$, which is due to
Using the inclusion of proposition~\ref{prop:FirstRow}, we see that a plethystic decomposition of $h_n^3$ for all $n$ is equivalent to coloring the points of strands of $P_\infty = \bigcup_{n=0}^{\infty} P_n$ (by their type). Let's start by examining the tableaux of $P_n$ which have $\kappa_2=\kappa_4$. These lie in the "center" of the $P_n$, at the junction of the planes $\kappa_2+\kappa_4=n$ and $\kappa_4 = \kappa_1+\kappa_2$ which mark the frontiers between $P_{n}$ and $P_{n+1}$. These tableaux are the first to appear in a new $j$-strand.

\begin{comment}

It gives the following stability condition for the types of tableaux.

\begin{prop} Let $j\in \N$ be the fixed second part of two-line partitions $(3N-j,j)$. Let $\nu\vdash 3$.\\
	
	For $N\leq\frac{2j-1}{3}$,
	
	\vspace{-2.5em}
	\[\langle s_\nu[h_N], s_{(3N-j,j)} \rangle = 0.\]
	
	For $\frac{2j-1}{3}< N\leq j$,
	
	\vspace{-2.5em}
	\[0 \leq \langle s_\nu[h_N], s_{(3N-j,j)} \rangle \leq \langle s_\nu[h_j], s_{(3N-j,j)} \rangle.\] 
	
	For all $N\geq j$, we have
	
	\vspace{-2.5em}
	\[\langle s_\nu[h_N], s_{(3N-j,j)} \rangle = \langle s_\nu[h_j], s_{(2j,j)} \rangle \]
\end{prop}

\begin{proof}
	This follows from proposition~\ref{prop:FirstRow}, and Chen's rules. It is also possible to prove the last expression by using Theorem 1.2 in \cite{DeBoeckPagetWildon}.
\end{proof}

%\begin{rem}
	%Let's note that the injection only modifies the first row of tableaux. Small examples show us that the plethystic attributions remain the same through the injection, so we ask  plethystic attributions to agree with the injection.%, which we'll call plethystic inclusion. 
%	This means that a plethystic decomposition is equivalent to coloring the points of strands of $P_\infty$ (by their type). In particular, we can go "down" $P_n$, by considering the successive inclusions $P_m\subset P_n$ for $0\leq m \leq n$, starting from the middle of $P_m$ where the first tableaux of new $j$-strands will appear, and adding new attributions as the $j$-strands grow.
%\end{rem}

This means that a plethystic decomposition is equivalent to coloring the points of strands of $P_\infty = \bigcup_{n=0}^{\infty} P_n$ (by their type). Let's start by examining the tableaux of $P_n$ which have $\kappa_2=\kappa_4$. These lie in the "center" of the polytopes, at the junction of the planes $\kappa_2+\kappa_4=n$ and $\kappa_4 = \kappa_1+\kappa_2$ which mark the frontiers between $P_{n}$ and $P_{n+1}$. These tableaux are the first to appear in a new $j$-strand.

\end{comment}

\begin{lem}
	In order to agree with Chen's rule and proposition~\ref{prop:FirstRow}, a type is determined for the integer points (tableaux) of $P_n$ on the plane $\kappa_2=\kappa_4$,  according to the value of the second part $j$ of the associated partition $(3n-j,j)$:
	\begin{center}
	$\left\{
	\begin{tabular}{cl}
	$\young(123)$ &if $j\equiv_6 0$\\
	$\young(12,3)$  & if $j\equiv_6 1$\\
	$\young(1,2,3)$ & if $j\equiv_6 3$\\
	$\young(13,2)$ & if $j\equiv_6 4$
	\end{tabular}.\right.$
	\end{center}
\label{prop:attributionMiddlePn}
\end{lem}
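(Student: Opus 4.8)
The plan is to identify, for a point on the plane $\kappa_2=\kappa_4$ inside $P_n$, the corresponding tableau explicitly, compute the associated partition $(3n-j,j)$ in terms of the $\kappa_i$, and then feed $j$ (and $n$) into Chen's formulas together with the stabilization of Proposition~\ref{prop:FirstRow} to pin down which of the four plethysms the unique copy of $s_{(3n-j,j)}$ lands in. The key observation is that a point with $\kappa_2=\kappa_4$ is, by the discussion preceding the lemma, the \emph{first} tableau of a $j$-strand: it is the point at which a new strand appears, and by Proposition~\ref{prop:FirstRow} its type is already stable, i.e.\ equal to the type of the same tableau in $P_N$ for all $N\geq n$. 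So it suffices to determine the type of that first tableau in the stable range, where $K^{(3n-j,j)}_{(n)^3}=j+1$ and Chen's formulas apply in their cleanest form.

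First I would record the arithmetic: for $(\kappa_1,\kappa_2,\kappa_3,\kappa_4,\kappa_5)$ we have $\nu=(3\kappa_1+2\kappa_2+2\kappa_3+\kappa_4+\kappa_5,\ \kappa_2+\kappa_3+2\kappa_4+\kappa_5,\ \kappa_5)$, so with $\kappa_5=0$ (we may reduce to this case using Proposition~\ref{prop:columns}, and check the conjugation bookkeeping is consistent) and $\kappa_2=\kappa_4$ the second part is $j=\kappa_2+\kappa_3+2\kappa_4=3\kappa_2+\kappa_3$ when $\kappa_5=0$, i.e.\ $\kappa_3\equiv j\pmod 3$ with $\kappa_3\in\{0,1\}$; the case $\kappa_3=1$ then forces $j\equiv 1\pmod 3$, which is why only the residues $0,1,3,4$ of $j$ modulo $6$ occur among these center points (residues $2,5$ would require $\kappa_3=2$). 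Next, since the point is the first of a (complete, in the stable range) $j$-strand, by the preceding Lemma its $2$-subtype is $\young(12)$, which already eliminates $\young(13,2)$ and $\young(1,2,3)$ when... — more precisely, I would combine the $2$-subtype information with Chen's coefficient formulas evaluated at $K^\mu_{(n)^3}=j+1$ and $\mu_2=j$: for $j\equiv 0$ one gets $j+1\equiv 1\pmod 6$, and Chen's rule assigns the extra copy to $s_3[h_n]$ precisely when $\mu_2=j$ is even, giving type $\young(123)$; for $j\equiv 3$, $j+1\equiv 4\pmod 6$ is handled by the $\{0,4\}$ clause but $\mu_2=j$ is odd, so the ceiling goes to $s_{111}[h_n]$, giving type $\young(1,2,3)$; for $j\equiv 1$ and $j\equiv 4$, $j+1\equiv 2,5\pmod 6$ respectively, so neither $s_3$ nor $s_{111}$ gets an extra copy and the remaining copy must go to one of the two $s_{21}[h_n]$, with the $2$-subtype (even vs.\ odd number of $2$'s in row $2$) deciding between $\young(12,3)$ and $\young(13,2)$.

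The step I expect to be the main obstacle is the last one: distinguishing the two copies of $s_{21}[h_n]$ and checking that the $2$-subtype forces the claimed choice in the cases $j\equiv 1$ and $j\equiv 4$, because this requires tracking which copy of $s_{21}[h_n]$ is "copy 1" versus "copy 2" consistently with the earlier conventions, and verifying that the first tableau of the $j$-strand indeed has the $2$-subtype predicted by the previous Lemma ($\young(12)$ for the center point, since with $\kappa_5=0$ the number of $2$'s on the second row is $\kappa_2+\kappa_5=\kappa_2$, whose parity matches that of $j=3\kappa_2+\kappa_3$ only when... one must be careful). I would therefore carry out the case analysis on $j\bmod 6$ in parallel with a direct count of the number of $2$'s in row $2$ of the explicit first tableau, cross-checking against Chen's multiplicities, and conclude by invoking Proposition~\ref{prop:FirstRow} to transport the attribution back down to $P_n$. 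The remaining cases reduce to routine evaluation of floors and ceilings in Chen's formulas.
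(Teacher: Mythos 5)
Your reduction of the problem is right in outline — locate the center points, observe that only $j\equiv_6 0,1,3,4$ can occur there, use the $2$-subtype to cut the four types down to two, and use proposition~\ref{prop:FirstRow} to transport types between different $n$ — but the step where you actually pin down the type has a genuine gap, and it points in the wrong direction. You propose to evaluate Chen's formulas ``in the stable range,'' where the $j$-strand is complete and $K^{(3n-j,j)}_{(n)^3}=j+1$. There Chen's rule only tells you \emph{how many} of the $j+1$ tableaux of the strand carry each type (e.g.\ for $j=6k$: $k+1$ of type $\young(123)$, $k$ of type $\young(1,2,3)$, $2k$ of each of $\young(12,3)$ and $\young(13,2)$); it says nothing about \emph{which} tableau of the strand carries which type, so ``Chen's rule assigns the extra copy to $s_3[h_n]$, giving type $\young(123)$'' is a non sequitur for the particular tableau sitting on $\kappa_2=\kappa_4$. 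The forcing has to come from the \emph{first occurrence} of the $j$-strand, at $n=\lceil 2j/3\rceil$, where the strand holds only $K=1$ (for $j\equiv_3 0$) or $K=2$ (for $j\equiv_3 1$) tableaux: with one tableau Chen's coefficients are $1,0,0,0$ and the type is forced outright, and with two they are $0,1,1,0$ so the $2$-subtype (parity of the number of $2$'s in the second row) decides between $\young(12,3)$ and $\young(13,2)$. That is the paper's argument, and proposition~\ref{prop:FirstRow} then carries the attribution \emph{up} to all larger $n$, rather than down from the stable range.

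A second, related slip: the point on $\kappa_2=\kappa_4$ is the first tableau of the strand \emph{at its first occurrence}, but under the inclusion $T\mapsto T\vee\young(123)$ it does \emph{not} become the first tableau of the complete strand (which has no $2$'s on its second row); it becomes the tableau with $\lfloor j/3\rfloor$ twos there. So your assertion that its $2$-subtype is $\young(12)$ ``since it is the first of a complete strand'' is false as stated — the correct $2$-subtype is governed by the parity of $\lfloor j/3\rfloor$, even for $j\equiv_6 0,1$ and odd for $j\equiv_6 3,4$, which you half-notice in your final paragraph but leave unresolved. Working at the first occurrence instead makes both issues disappear.
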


\begin{proof}
	There are two points in $P_n$ which have $\kappa_2=\kappa_4 = i$ for a certain $i\in\N$: $(\kappa_1,\kappa_2,\kappa_3,\kappa_4) \in \{ (n-2i,i,0,i), (n-2i-1,i,1,i) \}$. The first lies in the strand associated to the partition $(3n-3i,3i)$, and the second, to that associated to $(3n-3i-1,3i+1)$. Therefore, the only strands with a point on the plane $\kappa_2=\kappa_4$ have second part $j\equiv_3 \{0,1\} \equiv_6 \{0,1,3,4\}$.\\
	
	%In the cases $j\equiv_6 \{0,1,3,4\}$, the $j$-strands appear first in $P_n$ for $n=\lceil \frac{2j-1}{3} \rceil$. \\
	Recall that a $j$-strand appears first in $P_n$ for $n=\lceil \frac{2j}{3} \rceil$. %Let $j$ corresponds to a new $j$-strand in $P_n$. 
	
	If $j\equiv_6 \{0,3\}$, $K^{(3n-j,j)}_{(n)^3} = 1$. In the first case, $j$ is even, so the unique tableau in the incomplete $j$-strand must be of type $\young(123)$.
	In the second case, $j$ is odd, so the unique tableau in the incomplete $j$-strand must have type $\young(1,2,3)$. This agrees with using the parity of the number of $2$'s on the second row.
	We have $K^{(3n-j,j)}_{(n)^3} = 1$ only if $\min(3n-2j,j) = 0$, so the two parts of $(3n-j,j)$ must have the same length $j=\frac{3n}{2}$, and $n$ must be even.
	%Let's note the number of $2$'s on the second row by $k$. 
	The second row then holds $\frac{3n}{2} - n = \frac{n}{2}$ entries $2$, which has the same parity as $j$.\\
	
	For $j\equiv_6 \{1,4\}$, $K^{(3n-j,j)}_{(n)^3} = 2$, so the second part $j$ must be one less than the first part: $j=\frac{3n-1}{2}$, and $n$ must be odd. Therefore the two tableaux in the incomplete $j$-strand must have type $\young(12,3)$ or $\young(13,2)$. The first tableau of the $j$-strand has $(j-1)$ $2$'s on the second row, and the second, $j$ $2$'s. Note that only the first rests on the plane $\kappa_2=\kappa_4$.\\
	
	In the case $j\equiv_6 1$, $j$ is odd, so the first tableau must be associated to $\young(12,3)$, and the second to $\young(13,2)$. In the case $j\equiv_6 4$, $j$ is even, so it is the opposite. 
\end{proof}

\subsection{Choice of order for undetermined type attributions}
The new $j$-strands of $P_n$ %for $n=\lceil\frac{2j-1}{3}\rceil$ if $j\equiv_6\{0,1,3,4\}$, and $n=\lceil\frac{2j-1}{3}\rceil+1$ if $j\equiv_6\{2,5\}$. It then
 have $1$ ($j\equiv_3 0$) or $3$ ($j\equiv_3 2$) tableaux if $n$ even, and $2$ ($j\equiv_3 2$) tableaux if $n$ odd. If a new $j$-strand holds $1$ or $2$ tableaux, their types are entirely determined as we have seen above. %: either $s_3[h_n]$ orn $s_{111}[h_n]$ according to the parity of $j$, and two copies of $s_{21}[h_n]$ will be attributed in one order or the other depending again on the parity of $j$. 
If it holds three tableaux, the type of the middle one is determined: $\young(13,2)$ if $j$ even, $\young(12,3)$ if $j$ odd. For the first and last attribution of the $j$-strand, however, %a choice needs to be made: 
the two types are determined, but not which is attributed to the first or last tableau.\\ % This introduces a choice in the plethystic attribution.\\

Moreover, going from $P_n$ to $P_{n+1}$ adds three tableaux to all incomplete $j$-strands, one to their start, and two to their end, until they become complete. At each step, the type of one of the two tableaux added at the end of a $j$-strand is determined, the two other types are also determined, but not the order of the attribution.\\ %, which generates a new choice at each level. %Each of these choices determine different rules for the plethystic attribution. In particular,

The type of the tableau added at the start of the $j$-strand is always undetermined, so fixing a rule for the attribution of these types determines the others. Any pattern which starts with either $\young(123)$ or $\young(12,3)$, alternates between both $2-$subtypes $\young(12)$ and $\young(1,2)$, and ends with the wanted "middle" attribution, at $\kappa_2=\kappa_4$, gives a type attribution which agrees with Chen's rules and our rules.\\ %respects the inclusion of $P_n$ in $P_{n+1}$ and the condition on the parity of the number of $2$'s on the second row.\\ 

There are infinitely many possible patterns which agree with the above conditions, however some reveal interesting patterns. We now discuss one such type attribution, and argue that it is the simplest, and most elegant, one.\\

\subsection{Simplest type attribution}

This construction rests on the choice that all first type attributions of a $j$-strand are in turn $\young(123)$ or $\young(1,2,3)$. It was found %simultaneously and
independantly by Mike Zabrocki and %other 
coauthors \cite{Zabrocki}.

\begin{theo*}
	Let $T$ be a tableau of shape $(3n-j,j)$ with $\kappa_2$ entries $2$, and $k_{3,2}$ entries $3$, on its second row. Define the type of $T$  to be
	\begin{center}
		$\left\{
		\begin{tabular}{clcc}
		$\young(123)$ & if $\kappa_2$ even & AND & $\kappa_2<\left\lfloor\frac{k_{3,2}}{2}\right\rfloor$ , OR $\kappa_2=\left\lfloor\frac{k_{3,2}}{2}\right\rfloor$ AND $j\equiv_3 0$\\			$\young(12,3)$ & if $\kappa_2$ even & AND & $\kappa_2>\left\lfloor\frac{k_{3,2}}{2}\right\rfloor$ , OR $\kappa_2=\left\lfloor\frac{k_{3,2}}{2}\right\rfloor$ AND $j\equiv_3 1$\\
		$\young(1,2,3)$ & if $\kappa_2$ odd & AND & $\kappa_2<\left\lfloor\frac{k_{3,2}}{2}\right\rfloor$ , OR $\kappa_2=\left\lfloor\frac{k_{3,2}}{2}\right\rfloor$ AND $j\equiv_3 0$\\
		$\young(13,2)$ & if $\kappa_2$ odd & AND & $\kappa_2>\left\lfloor\frac{k_{3,2}}{2}\right\rfloor$ , OR $\kappa_2=\left\lfloor\frac{k_{3,2}}{2}\right\rfloor$ AND $j\equiv_3 1$
		\end{tabular}\right.$.
	\end{center}

	\begin{comment}
	\begin{center}
	    $\left\{
		\begin{tabular}{clcc}
			$\young(123)$ & if $\kappa_2$ even & AND & $\kappa_2<\lfloor\frac{k_{3,2}}{2}\rfloor$ , OR $\kappa_2=\lfloor\frac{k_{3,2}}{2}\rfloor$ AND $k_{3,2}\equiv_4\{0,1\}$\\			$\young(12,3)$ & if $\kappa_2$ even & AND & $\kappa_2>\lfloor\frac{k_{3,2}}{2}\rfloor$ , OR $\kappa_2=\lfloor\frac{k_{3,2}}{2}\rfloor$ AND $k_{3,2}\equiv_4\{2,3\}$\\
			$\young(1,2,3)$ & if $\kappa_2$ odd & AND & $\kappa_2<\lfloor\frac{k_{3,2}}{2}\rfloor$ , OR $\kappa_2=\lfloor\frac{k_{3,2}}{2}\rfloor$ AND $k_{3,2}\equiv_4\{2,3\}$\\
			$\young(13,2)$ & if $\kappa_2$ odd & AND & $\kappa_2>\lfloor\frac{k_{3,2}}{2}\rfloor$ , OR $\kappa_2=\lfloor\frac{k_{3,2}}{2}\rfloor$ AND $k_{3,2}\equiv_4\{0,1\}$
		\end{tabular}\right.$.
	\end{center}
	\end{comment}

%This construction gives the cardinalities obtained through Chen's rules, agrees with the proposition~\ref{prop:FirstRow}, and respect the rules for $2$-subtypes.
The number of tableaux of each type coincides with Chen's formulas. Explicitly, the coefficient of $s_\nu$ in the plethysm $s_{shape(t)}[h_n]$ indexed by the standard tableau $t$ is given by $c_\nu^{(t)}=| \{T\in \SSYT(\nu,(n)^3) \ | \ \text{type}(T) = t\ \} |$.

\label{Conj:ColoringS3S111}
\end{theo*}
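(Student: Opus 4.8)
The plan is to prove the count one \emph{$j$-strand} at a time. For fixed $j$, Proposition~\ref{prop:spanningAllTabOfACertainShape} and the discussion in Section~\ref{section:polytopes} tell us that the tableaux of shape $(3n-j,j)$ and content $(n)^3$ are parametrized by a single integer $c:=\kappa_2$, the number of $2$'s in the second row: one has $k_{3,2}=j-c$, and $c$ ranges over an integer interval $I(n,j)=[\,\max(0,j-n),\ \min(j,2n-j)\,]$ of cardinality $\delta:=K^{(3n-j,j)}_{(n)^3}=\min(3n-2j,j)+1$. The first move is to rewrite the rule of the theorem purely in terms of $c$ and $j$. Since $\kappa_2<\lfloor k_{3,2}/2\rfloor\iff j\ge 3c+2$, $\kappa_2>\lfloor k_{3,2}/2\rfloor\iff j\le 3c-1$, and $\kappa_2=\lfloor k_{3,2}/2\rfloor\iff j\in\{3c,3c+1\}$ (the two subcases separated by $j\bmod 3$), a short computation shows the rule is equivalent to: $T$ has type $\young(123)$ (if $c$ even) or $\young(1,2,3)$ (if $c$ odd) when $c\le\tau(j)$, and type $\young(12,3)$ (if $c$ even) or $\young(13,2)$ (if $c$ odd) when $c>\tau(j)$, where $\tau(j)=\lfloor j/3\rfloor$ if $j\not\equiv 1\pmod 3$ and $\tau(j)=\lfloor j/3\rfloor-1$ if $j\equiv 1\pmod 3$. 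In particular the type of $T$ depends only on its second row, hence is unchanged by $T\mapsto T\vee\young(123)$; this is what permits an induction on $n$ along each $j$-strand.

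The induction runs as follows. For the base case take $n=\lceil 2j/3\rceil$ (the first $n$ for which the strand is nonempty when $j\ge 3$; for $j\le 2$ take $n=j$). There $\delta=(j\bmod 3)+1\in\{1,2,3\}$, and one checks directly that the at most three tableaux receive types agreeing with Chen's formulas for $\delta\le 3$ — for $j\not\equiv 2\pmod 3$ the first tableau lies on the plane $\kappa_2=\kappa_4$, so one may also invoke Lemma~\ref{prop:attributionMiddlePn}, and the middle tableau when $\delta=3$ is the determined one discussed before that lemma. For the inductive step, passing from $P_n$ to $P_{n+1}$ while $\lceil 2j/3\rceil\le n<j$ (the strand still incomplete) adds exactly three tableaux, by the analysis of Section~\ref{section:polytopes} and Figure~\ref{fig:ReductionTab3cellules1eLigne}: one at the left of $I$, with parameter $c=j-n-1$, and two at the right, with consecutive parameters $c=2n-j+1$ and $c=2n-j+2$. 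Using $n\ge\lceil 2j/3\rceil$ one verifies $j-n-1\le\lfloor j/3\rfloor-1\le\tau(j)$ and $2n-j+1>\tau(j)$, so the new left tableau has ``first type'' — namely $\young(123)$ exactly when $j-n-1$ is even, i.e. when $n+j$ is odd — while the two new right tableaux have ``second type'', and being consecutive they are one $\young(12,3)$ and one $\young(13,2)$. Hence this step increases the count of $\young(12,3)$ by $1$, that of $\young(13,2)$ by $1$, and exactly one of the counts of $\young(123)$, $\young(1,2,3)$ by $1$ (the former iff $n+j$ is odd). Once $n\ge j$ the strand is complete, no tableaux are added, and the counts stabilize.

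It remains to match these increments against Chen's formulas. When $\delta$ increases by $3$ with $\mu_2=j$ fixed, $c^{(21)}_\nu=\lceil(\delta-1)/3\rceil$ increases by $1$, and from $c^{(3)}_\nu+c^{(111)}_\nu+2c^{(21)}_\nu=\delta$ it follows that $c^{(3)}_\nu+c^{(111)}_\nu$ increases by $1$; a case check on $\delta\bmod 6$ in Chen's piecewise formula for $c^{(3)}_\nu$ shows that it is $c^{(3)}_\nu$ that increases precisely when $\delta\equiv j\pmod 2$. Since $\delta=3n-2j+1\equiv n+1\pmod 2$, this condition is exactly ``$n+j$ odd'', matching the rule. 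Together with the base case this closes the induction, so the counts of the four types along each $j$-strand agree with Chen's coefficients for every $n$; as the $j$-strands over $0\le j\le\lfloor 3n/2\rfloor$ exhaust $\SSYT((3n-j,j),(n)^3)$, this proves the statement. (The three-row case reduces to this one via Proposition~\ref{prop:columns}: removing the height-three columns and conjugating the type when their number is odd preserves all four counts.)

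The main obstacle is the arithmetic of the inductive step: pinning down exactly which parameters $c$ enter when $P_n\hookrightarrow P_{n+1}$, checking that they land on the correct side of the threshold $\tau(j)$ uniformly in $n$ (this uses the inequalities coming from $n\ge\lceil 2j/3\rceil$ together with the residue of $j$ modulo $3$), and reconciling the parity condition ``$n+j$ odd'' with the behaviour of Chen's floor/ceiling formula for $c^{(3)}_\nu$ under $\delta\mapsto\delta+3$. The conceptual simplifications — the reformulation in terms of $(c,j)$, the invariance of the type under adding $\young(123)$ to the first row, and the already-established determined attributions (Lemma~\ref{prop:attributionMiddlePn} and the $2$-subtype lemma) — make the structure transparent, but the floor/ceiling/parity bookkeeping is where the care is needed.
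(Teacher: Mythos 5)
Your proposal is correct and follows essentially the same route as the paper: reformulate the rule as a threshold condition along each $j$-strand, verify the base case at the strand's first occurrence in $P_{\lceil 2j/3\rceil}$, and track the three tableaux added at each step $P_n\to P_{n+1}$ against the increments of Chen's coefficients. The only real difference is organizational — you run a single induction that also covers complete strands and replace the paper's residue-by-residue check of $j\bmod 6$ with the cleaner observation that under $\delta\mapsto\delta+3$ it is $c^{(3)}_\nu$ that increases exactly when $\delta\equiv j\pmod 2$, i.e.\ when $n+j$ is odd — which tightens the bookkeeping without changing the argument.
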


This gives the coloring of figure~\ref{fig:ColoringS3S111}. %We can see (visually) that 
It appears visually to be the simplest type attribution. %: the first third of each strand is attributed in turns to \young(123) and \young(1,2,3), as the last two thirds, to \young(12,3) and \young(13,2). Chen's rule is then easily complied with.

%about one sixth of the tableaux of any strand then are of each type  $s_{3}[h_n]$ and $s_{111}[h_n]$, as about one third contribute to each copy of $s_{21}[h_n]$: the first third of every strand contributes alternatingly to $s_{3}[h_n]$ and $s_{111}[h_n]$, then the next two thirds contribute to alternating copies of $s_{21}[h_n]$. This gives exactly Chen's rule. This coloring is also the simplest possible which agrees with the plethystic injections.\\

\begin{figure}[h]
	
	\begin{center}	\includegraphics[width=0.4\linewidth]{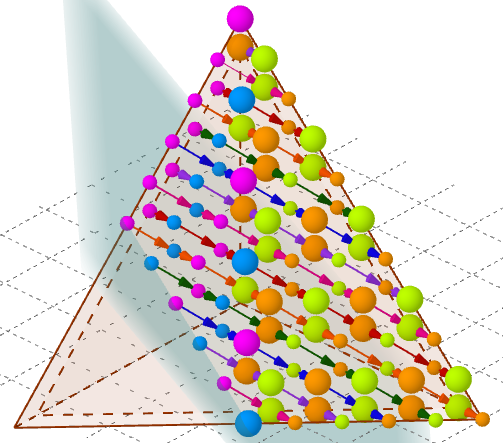}
	\end{center}
	\vspace{-1em}
	\caption{Coloring of the points of $P_{10}$ according to theorem~\ref{Conj:ColoringS3S111}, where pink points are of type $\protect\young(123)$, orange points, of type $\protect\young(12,3)$, green points, of type $\protect\young(13,2)$ and blue points, of type $\protect\young(1,2,3)$. %, for $n\leq 10$.
	Larger points indicate points with determined type.}
	\label{fig:ColoringS3S111}
\end{figure}

\begin{rem}
	Since $\kappa_4 = \left\lfloor \frac{k_{3,2}}{2} \right\rfloor$, the conditions account to verifying whether $\kappa_2<\kappa_4$ or $\kappa_2>\kappa_4$, and if $\kappa_2=\kappa_4$, what is the value of $j$ modulo $6$. 
\end{rem}

\begin{proof}
	The complete $j$-strands have $K^{(3n-j,j)}_{(n)^3} = j+1$ tableaux. %, with $0\leq \kappa_2\leq j$, and $k_{3,2}=j-\kappa_2$. % $2$'s on their second row. 
	The first tableau of a $j$-strand has only $3$'s on its second row, and each exchange increases the number of $2$'s on the second row by $1$, so the last tableau has only $2$'s on its second row.\\ %Since $\kappa_4 = \lfloor \frac{k_{3,2}}{2} \rfloor$, the condition $\kappa_2 < (\text{ or } >) \lfloor \frac{k_{3,2}}{2} \rfloor$ corresponds to the points of the strand on either sides of $\kappa_2=\kappa_4$.\\
	
	The condition $\kappa_2 < \left\lfloor \frac{k_{3,2}}{2} \right\rfloor$ is valid for $\kappa_2<\left\lceil \frac{j}{3}\right\rceil $, since $k_{3,2}=j-\kappa_2$, so the first $\left\lceil \frac{j}{3} \right\rceil$ tableaux have alternatingly type $\young(123)$ and $\young(1,2,3)$. %, which gives about one sixth of the tableaux to each. %\\
	The condition $\kappa_2> \left\lfloor \frac{k_{3,2}}{2} \right\rfloor$ is valid for $\left\lfloor \frac{j}{3} \right\rfloor +1\leq \kappa_2 \leq j$, so the last $\left\lfloor \frac{2j}{3} \right\rfloor$ tableaux have alternatingly type $\young(12,3)$ and $\young(13,2)$.\\%, which gives about one third of the tableaux to each.\\
	
	Let's verify we get the exact cardinalities obtained through Chen's rules for $j= 6k+\ell$, $\ell \in \{ 0,1,2,3,4,5 \}$. Suppose $j\equiv_6 0$, so $K^{(3n-j,j)}_{(n)^3} = 6k+1$.
	We have seen that the $j$-strand has one point on the line $\kappa_2=\kappa_4$ if $j \equiv_3 \{ 0,1 \}$, with attributions given in proposition~\ref{prop:attributionMiddlePn}.

	Then the "middle attribution" here is $\young(123)$, which gives $k+1 = $
	$\left\lceil \frac{j+1}{6} \right\rceil $ tableaux of type \young(123), $k = \left\lfloor \frac{j+1}{6} \right\rfloor$ tableaux of type $\young(1,2,3)$, and $2k$ tableaux of each type $\young(12,3)$ and $\young(13,2)$. This gives exactly the cardinalities obtained through Chen's rule for $K^\nu_{(n)^3}\equiv_6 1$.
	%
	%When $j=6k+1$ ($\delta_{(3n-j,j)}=6k+2$), the middle attribution is the first copy of $s_{21}[h_n]$, which gives $k = \lfloor \frac{j+1}{6} \rfloor $ tableaux to both $s_3[h_n]$ and $s_{111}[h_n]$, and $2k+1$ tableaux to each copy of $s_{21}[h_n]$.
	%
	The proof that this is true for the other complete $j$-strands %values of $j = 6k+\ell$, $\ell \in \{ 1,2,3,4,5 \}$, 
	is done using exactly the same reasoning.\\
	
	Let's now see what happens when $j>n$ and the $j$-strand is incomplete in $P_n$. The number of tableaux in the $j$-strand is then $K^{(3n-j,j)}_{(n)^3} = 3n-2j+1$. 
	%Going from $P_n$ to $P_{n+1}$ adds one tableau to the start of the $j$-strand, assigned (according to our construction) to either $\young(1,2,3)$ or $\young(123)$, and two tableaux to the end of the strand, assigned to $\young(12,3)$ and $\young(13,2)$. %Repeating the process completes the addition of the six plethysms $s_3[h_n]$, $s_{111}[h_2]$ and two times the two copies of $s_{21}[h_n]$, following Chen's rule.
	We start by considering the types of the tableaux in the first occurrence of a $j$-strand, verify it satisfies Chen's rule, and verify that adding tableaux according to the attributions above also does.
	
	At its first occurrence, a $j$-strand counts either $1$ ($j\equiv_3 1$), $2$ ($j\equiv_3 2$) or $3$ ($j\equiv_3 2$) tableaux. The attributions described in the proof of proposition~\ref{prop:attributionMiddlePn} agree with Chen's rule, and with the parity of the number of $2$'s on the second row.
	
	Going from $P_n$ to $P_{n+1}$ adds one tableau to the start of the strand with type either $\young(123)$ or $\young(1,2,3)$, and two tableaux to its end, of type $\young(12,3)$ and $\young(13,2)$ (in one order or the other).
	When starting with one tableau, %adding the three others 
	it gives four tableaux, one for each plethysms. When starting with two tableaux, %adding the three others 
	it gives five tableaux, of type either $\young(123)$ ($j$ even) or $\young(1,2,3)$ ($j$ odd) and two times $\young(12,3)$ and $\young(13,2)$. When starting with three tableaux, %adding the three others 
	it gives six tableaux, one of each type $\young(123)$ and $\young(1,2,3)$ and two of each type $\young(12,3)$ and $\young(13,2)$. This %type attribution 
	agrees with Chen's rule and with the rules for $2$-subtypes, since the attributions alternate between the two $2$-subtypes.\\
	
	Going from $P_{n}$ to $P_{n+2}$ then adds the six types above, %one of type $\young(123)$, one of type $\young(1,2,3)$, two of type $\young(12,3)$ and two of type $\young(13,2)$, 
	and gives back the same number of tableaux modulo $6$ as in the first occurrence of the $j$-strand. Since these attributions
	agree with Chen's rule (and with the rules for $2$-subtypes), %and jumping two levels (and adding the attribution of all six plethysms to the initial ones) also does, 
	then all attributions do.  
\end{proof}

\begin{rem}
	This construction is particularly interesting when considering the antidiagonals appearing in $P_n$: the tableaux in different strands that hold the same number of entries $2$ on their second row. They must all have the same $2$-subtype.
	%One can see this as building the tableaux of content $(n^3)$ via the Pieri rule, adding first a horizontal band of $n$ $1$'s, then one with $n$ $2$'s, and finally one with $n$ $3$'s. As one would expect, the antidiagonals alternate in types in the two groups of plethysms, the ones accounting for tableaux of $2-$subtype $\young(12)$, and the ones accounting for tableaux of $2-$subtype $\young(1,2)$. 
    In a specific antidiagonal, the attributions are segregated in two blocks, again before and after passing the threshold plane $\kappa_2=\kappa_4$: when the number of $3$'s on the second line is greater to two times the number of $2$ (but not a difference of $1$), the type is either $\young(123)$ or $\young(1,2,3)$ (depending on the $2$-subtype), and when it is smaller than two times the number of $2$'s, the type is either $\young(12,3)$ or $\young(13,2)$. For the point with $\kappa_2=\kappa_4$ (exactly two points on each complete antidiagonal), the types are exchanged. This means that when constructing a tableau of content $(n)^3$, there is a threshold when adding $3$'s to the second line after which the plethystic attribution change, and this threshold occurs when about a third of the second row holds $2$'s.\\ 
    This also points towards a certain "stability" condition under the operation $\vee \young(12,3)$, where it preserves the plethystic attribution up to a threshold point, after which it is stable again. Proving this stability condition whould then show that the attribution described above is the unique one, as it is the only one with this property.
\end{rem}

\begin{comment}
\begin{rem}
As we have seen above, the "middle type", at $\kappa_2=\kappa_4=\lfloor \frac{k_{3,2}}{2} \rfloor$, depends only on the value of $j\mod 3$: if $j\equiv_3 0$, it is either $\young(123)$ ($j$ even) or $\young(1,2,3)$ ($j$ odd), and if $j\equiv_3 1$, it is either $\young(12,3)$ if $j$ odd or $\young(13,2)$ if $j$ even. This condition can replace that with the value of $\kappa_{3,2}$ modulo $4$ in the statement of the theorem.	
\end{rem}
\end{comment}

\subsection{Other type attributions}

%The second construction 
Another possible type attribution rests on the choice that all first type attributions are alternatingly $\young(12,3)$ and $\young(13,2)$. This construction was first found by Mike Zabrocki. The resulting coloring is illustrated in figure~\ref{fig:ColoringS21S21}. %It visually appears less elegant than the one described in the previous section. %: %the first third of every strand have types $\young(12,3)$ or $\young(13,2)$, then 
%the last two other thirds of $j$-stands have type attributions which repeat in cycles of the four types, but which cycle depends on the value of $(j \mod 4)$.
There are infinitely many other 
plethystic attributions which also respect the rules discussed above.%, however the above is the simplest, and our preferred one.

\begin{comment}
It gives the following.\\
\flo{Pertinent vu que tout le monde s'entend pour dire que c'est moins bien?}

\begin{theo*}
	Let $T$ be a tableau of shape $(3n-j,j)$ with $\kappa_2$ entries $2$, and $k_{3,2}$ entries $3$, on its second row. Define the type of $T$ to be:
	\begin{center}
	    \begin{tabular}{cccccc}
			$\young(123)$ & if $\kappa_2$ even & AND & $2\kappa_2\geq k_{3,2}$ & AND & $k_{3,2} \equiv_4 \{0,1\}$\\
			$\young(1,2,3)$ & if $\kappa_2$ odd & AND & $2\kappa_2\geq k_{3,2}$ & AND & $k_{3,2}\equiv_4 \{2,3\}$\\
			$\young(12,3)$ & if $\kappa_2$ even & AND EITHER & $2\kappa_2<k_{3,2}$ & OR & $k_{3,2}\equiv_4 \{2,3\}$\\
			$\young(13,2)$ & if $\kappa_2$ odd & AND EITHER & $2\kappa_2<k_{3,2}$ & OR & $ k_{3,2}\equiv_4 \{0,1\}$
		\end{tabular}.
	\end{center}

This construction gives the cardinalities obtained through Chen's rules, agrees with proposition~\ref{prop:FirstRow}, and respect the rules for $2-$subtypes.

	\label{Conj:ColoringS21S21}
\end{theo*}

This gives the coloring of figure~\ref{fig:ColoringS21S21}. %About one sixth of the tableaux of any strand contribute to each  $s_{3}[h_n]$ and $s_{111}[h_n]$, and about one third contribute to each copy of $s_{21}[h_n]$: 
The first third of every strand have type $\young(12,3)$ or $\young(13,2)$, then the two other thirds alternate between each four types in a cycle. \\ %This agrees with Chen's rule. This coloring also agrees with extensions and restrictions.\\
\end{comment}

\begin{figure}[h!]

	\begin{center}	\includegraphics[width=0.35\linewidth]{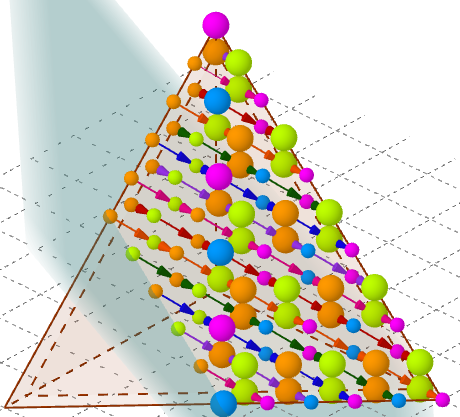}
	\end{center}
	\vspace{-1em}
	\caption{Another coloring of the points of $P_{10}$, %according to another type attribution,  %theorem~\ref{Conj:ColoringS21S21}
	where pink points are of type $\protect\young(123)$, orange points, of type $\protect\young(12,3)$, green points, of type $\protect\young(13,2)$ and blue points, of type $\protect\young(1,2,3)$. Large points indicate determined plethystic attributions.}
	\label{fig:ColoringS21S21}
\end{figure}

\section{Generalizing to $e_n^m$}\label{section:elementary}

All results above can be generalized to $e_n^3$. To do so, first note that:

\[
e_n^m = \omega(h_n)^m = \displaystyle \sum_{\mu \vdash nm} K_{(n)^m}^{\mu'} s_{\mu}.
\]

The Kostka numbers $K_{(n)^m}^{\mu'}$ can be interpreted as the number of \textit{conjugate tableaux} of shape $\mu$ and content $(n)^m$, which are tableaux such that the entries are strictly increasing along the rows and weakly increasing along the columns. This is because each of them %conjugate tableau of shape $\mu'$ is 
are obtained by conjugating a tableau of shape $\mu$ (\textit{i.e.} reflecting it along the main diagonal).\\

Using proposition~\ref{Plethystic types}, we also have that

\[
e_n^m = \displaystyle \sum_{\nu \vdash m} f^{\nu} s_{\nu}[e_n].
\]

We can understand the plethystic decomposition of $e_n^m$ by attributing types to conjugate tableaux. By using the $\omega$ involution introduced in section~\ref{section:plethysm}, we have that

\[
\langle s_{\mu}, s_{\nu}[e_n] \rangle = \langle \omega(s_{\mu}), \omega(s_{\nu}[e_n]) \rangle = 
\begin{cases}
\langle s_{\mu'}, s_{\nu}[h_n] \rangle & \text{if $n$ is even}; \\
\langle s_{\mu'}, s_{\nu'}[h_n] \rangle & \text{if $n$ is odd}.
\end{cases}
\]

So, we have that the type of a conjugate tableau is:

\begin{itemize}
\item The same type as its conjugate tableau if $n$ is even;
\item The conjugate type of its conjugate tableau if $n$ is odd.
\end{itemize}

Using this, it means that our results for the plethystic decomposition of $h_n^m$ also gives the plethystic decomposition of $e_n^m$. So, the previous results for $h_n^3$ are easily extended to $e_n^3$.

\begin{comment}
\section{Generalizing to $h_\lambda^3$}

For such a tableau $Q$, we can consider the subtableaux $Q^{(i)}$ of $Q$ with entries $n i-(2), ni-1, ni$, for $1\leq i \leq \ell(\lambda)$, and their rectifications $Q_i$, as the authors have shown in \cite{MaasTetrault}. \todo{ref}
%
If $\nu_i = shape(Q_i)$, and $\nu = shape(Q)$, then the Schur function $s_\nu$ occurs in the product $\prod_i s_{\nu_i}$, each $s_{\nu_i}$ indexed by $Q_i$ in $h_{\lambda_i}^3$.
%

We have given here statistics on the tableaux $Q_i$ to determine to which plethysms of $h_{\lambda_i}^3$ the associated Schur function $s_{\nu_i}$ contributes. However, we do not know how these combine into the tableau $Q$, and what relations hold between the contributions of the $s_{\nu_i}'s$ and $s_\nu$.

\section{Generalizing to $h_n^m$}

For $h_n^m$, we know that $h_n^m = \sum_{\nu\vdash m n} K^{\nu}_{(n)^m} s_\nu$.
So we can consider the set of tableaux of shape $\nu$ and filling $(n)^m$ counted by $K^{\nu}_{(n)^m}$. We would like to find statistics on tableaux which describe the plethystic attribution of each.

However generalizing our approaches seems far from obvious. For instance, a tentative Yamanouchi $4$-ribbon tableaux holds more than twenty different types of blocks (as we had five), which makes the geometrical visualization of the set of solutions difficult.
\end{comment}

\section{Plethysm of a product of symmetric functions}\label{section:Product}

Let $f_1,\ldots,f_k$ be symmetric functions. We investigate how one can understand the plethystic decomposition of $(f_1\ldots f_k)^m$ using the knowledge of the plethystic decomposition of each $f_i^m$. It turns out that it involves an understanding of the Kronecker coefficients, which we don't have in general. For small values of $m$, we can compute them, and doing so, we give explicit results for $m \leq 3$. We then apply results from $\cite{MaasTetreault}$ to show how to apply this technique to $h_{\lambda}^m$ and $e_{\lambda}^m$. %The results of this section and the previous ones give 
We then get an explicit plethystic decomposition of $h_{\lambda}^3$ and $e_{\lambda}^3$.

\subsection{The general case}
\label{section:Kronecker}

Consider the symmetric function $(f_1\ldots f_k)^m$. There are two possible plethystic decompositions, depending on whether we decompose the whole product or each function separately.

\begin{align*}
(f_1 \ldots f_k)^m  &= \displaystyle \sum_{t \in \SYT_m}                                       s_{\text{shape}(t)}[f_1 \ldots f_k] \\
(f_1 \ldots f_k)^m  &= f_1^m \ldots f_k^m \\
                    &= \prod_{i=1}^k \left( \displaystyle \sum_{t_i \in \SYT_m} s_{\text{shape}(t_i)}[f_i] \right) \\
                    &= \displaystyle \sum_{(t_1, \ldots, t_k) \in \SYT_m^k} \left( \prod_{i=1}^k s_{\text{shape}(t_i)}[f_i] \right).
\end{align*}

In our case, we know the latter decomposition, and we want to recover the former. That is, we would like to find a map $F_{m,k} : \SYT_m^k \rightarrow \SYT_m$ such that 

\[
s_{\text{shape}(t)}[f_1 \ldots f_k] = \displaystyle \sum_{(t_1, \ldots, t_k) \in F_{m,k}^{-1}(t)} \left( \prod_{i=1}^k s_{\text{shape}(t_i)}[f_i] \right).
\]

We call such a map a \textit{Kronecker map}. We can construct one recursively on $k$. When $k=2$, we use the following result, which can be found in \cite{Macdonald} (chapter 1, equation 7.9). 

%The first way is exactly what we did in section~\ref{section:plethysm}: in distinct subsets $A_t^{\mu}$, one for each standard tableau $t$ with $m$ cells, such that if $t$ has shape $\nu$, $\langle s_{\mu} , \s_{\nu}[h_n] \rangle = |A_t^{\mu}|$. For such a partition, we say that elements of $A_t^{\mu}$ have \textit{type} $t$. 

%The second way is to partition $A^{\mu}$ in distinct subsets $A_{\vec{t}}^{\mu}$, one for each $m$-standard multitableau $\vec{t} = (t_1, \ldots, t_k)$, such that if $t_i$ has shape $\nu^{(i)}$, then $\langle s_{\mu} , \displaystyle \prod_{i=1}^k s_{\nu^{(i)}}[f_i] \rangle = |A_{\vec{t}}^{\mu}|$. For such a partition, we say that elements of $A_{\vec{t}}^{\mu}$ have \textit{multitype} $\vec{t}$.

%For our means, we want to obtain a decomposition in types from a decomposition in multitypes. This is to say, we want a map $F$ that sends a multitype $\vec{t} = (t_1, \ldots, t_k)$ to a type $t$ in such a way that $A_t^{\mu} = \displaystyle \bigsqcup_{\vec{t} \in F^{-1}(t)} A_{\vec{t}}^{\mu}$ is a decomposition in types. To do so, we need the following result:

\begin{prop}\label{Kronecker}
For $\mu \vdash m$ and $f_1,f_2$ two symmetric functions. Then:
\[
s_{\mu}[f_1f_2] = \displaystyle \sum_{\alpha, \beta \vdash m} g_{\alpha,\beta}^{\mu} s_{\alpha}[f_1]s_{\beta}[f_2],
\]
where $g_{\alpha,\beta}^{\mu}$ are the \textit{Kronecker coefficients}. 
\end{prop}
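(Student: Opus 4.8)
The plan is to prove the identity by expanding everything in the power-sum basis and invoking the character theory of the symmetric group; this is the classical argument of \cite{Macdonald} (Chapter~1, around equation~7.9), and I would present it essentially as bookkeeping. The two inputs are, first, the character expansion of Schur functions: for $\mu\vdash m$,
\[
s_\mu=\sum_{\lambda\vdash m}\frac{\chi^\mu_\lambda}{z_\lambda}\,p_\lambda,
\]
where $\chi^\mu_\lambda$ is the value of the irreducible $S_m$-character indexed by $\mu$ on the class of cycle type $\lambda$ and $z_\lambda=\prod_i i^{m_i(\lambda)}m_i(\lambda)!$; and second, the character description of the Kronecker coefficients, which I would take as the definition meant in the statement,
\[
g^\mu_{\alpha,\beta}=\langle\chi^\alpha\chi^\beta,\chi^\mu\rangle_{S_m}=\sum_{\lambda\vdash m}\frac{1}{z_\lambda}\,\chi^\alpha_\lambda\chi^\beta_\lambda\chi^\mu_\lambda,
\]
the product $\chi^\alpha\chi^\beta$ being taken pointwise. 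Since $f\mapsto f[g]$ is additive and multiplicative in its first argument (two of the defining axioms of plethysm), applying it to the first expansion gives $s_\mu[g]=\sum_{\lambda}z_\lambda^{-1}\chi^\mu_\lambda\,p_\lambda[g]$ for every symmetric function $g$, and likewise for $s_\alpha[f_1]$ and $s_\beta[f_2]$.

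The first real step is to note that plethysm by a power sum is multiplicative on products in the inner slot: the axiom $p_k[uv]=p_k[u]\,p_k[v]$ together with $p_\lambda[u]=\prod_i p_{\lambda_i}[u]$ gives
\[
p_\lambda[f_1f_2]=p_\lambda[f_1]\,p_\lambda[f_2]\qquad\text{for every }\lambda\vdash m.
\]
Combined with the character expansion of $s_\mu$, this immediately yields $s_\mu[f_1f_2]=\sum_{\lambda\vdash m}z_\lambda^{-1}\chi^\mu_\lambda\,p_\lambda[f_1]\,p_\lambda[f_2]$, so what remains is to show that the right-hand side of the proposition equals this same sum.

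For that I would expand the right-hand side in power sums and collapse it using column orthogonality of the character table, $\sum_{\alpha\vdash m}\chi^\alpha_\lambda\chi^\alpha_\rho=z_\lambda\,\delta_{\lambda\rho}$. Substituting $s_\alpha[f_1]=\sum_\lambda z_\lambda^{-1}\chi^\alpha_\lambda p_\lambda[f_1]$, $s_\beta[f_2]=\sum_\rho z_\rho^{-1}\chi^\beta_\rho p_\rho[f_2]$ and the character formula for $g^\mu_{\alpha,\beta}$ gives
\[
\sum_{\alpha,\beta\vdash m} g^\mu_{\alpha,\beta}\,s_\alpha[f_1]\,s_\beta[f_2]
=\sum_{\tau,\lambda,\rho}\frac{\chi^\mu_\tau}{z_\tau z_\lambda z_\rho}\Bigl(\sum_{\alpha}\chi^\alpha_\tau\chi^\alpha_\lambda\Bigr)\Bigl(\sum_{\beta}\chi^\beta_\tau\chi^\beta_\rho\Bigr)\,p_\lambda[f_1]\,p_\rho[f_2].
\]
Column orthogonality forces $\tau=\lambda=\rho$ and replaces the two inner sums by $z_\lambda^2$, so the whole expression reduces to $\sum_{\lambda\vdash m}z_\lambda^{-1}\chi^\mu_\lambda\,p_\lambda[f_1]\,p_\lambda[f_2]$, which by the previous paragraph is exactly $s_\mu[f_1f_2]$.

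There is no genuine obstacle: the only point that deserves an explicit sentence is that the plethystic substitution $f\mapsto f[g]$ commutes with the power-sum expansion of a Schur function, which is immediate from additivity and multiplicativity of plethysm in the outer variable and from the fact that all sums are finite once $m$ is fixed. As an alternative I would mention — but not carry out — the slicker route via the internal product $\ast$: since $p_\lambda\ast p_\rho=\delta_{\lambda\rho}z_\lambda p_\lambda$ and $s_\alpha\ast s_\beta=\sum_\mu g^\mu_{\alpha,\beta}s_\mu$, the computation above is precisely the statement that $u\mapsto u[f_1f_2]$ factors through $\ast$; the power-sum calculation is the concrete incarnation of this.
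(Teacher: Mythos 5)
Your proof is correct: the power-sum expansion of $s_\mu$, the multiplicativity $p_\lambda[f_1f_2]=p_\lambda[f_1]p_\lambda[f_2]$, and column orthogonality of the $S_m$-character table combine exactly as you say to give the identity. The paper itself offers no proof of this proposition --- it is quoted directly from Macdonald (Chapter 1, equation 7.9) --- and your argument is precisely the standard one underlying that reference, equivalent to the internal-product formulation ($p_\lambda\ast p_\rho=\delta_{\lambda\rho}z_\lambda p_\lambda$) that you sketch at the end, so there is nothing to fault and nothing genuinely different to compare.
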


These coefficients are the coefficients of $s_{\mu}$ in the \textit{internal product} $s_{\alpha} \ast s_{\beta}$, which is defined in \cite{Macdonald} (chapter 1, section 7).\\ %the bilinear operation defined by $p_{\alpha} \ast p_{\beta} = \delta_{\alpha,\beta}$, where $\delta$ is the Kronecker delta function. See \todo{source} for more details. 

This result tells us that the map $F_{m,2}$ must be such that the number of pairs $(t_1,t_2) \in F_{m,2}^{-1}(t)$, where $t_1$ is of shape $\alpha$ and $t_2$ is of shape $\beta$, is equal to $g_{\alpha,\beta}^{\text{shape}(t)}$. This is the reason why we call $F_{m,k}$ a Kronecker map. Kronecker coefficients have no nice combinatorial description, so constructing such a map for all $m$ seems unlikely. However, it is doable for small values of $m$. There are usually many choices for a Kronecker map, because Kronecker coefficients only tells us about the shape of the pairs of tableaux, not the tableaux themselves.\\%, we can construct one.

When $m=2$, there is only one choice of Kronecker map, which is:

\begin{align*}
F_{2,2}^{-1}\left( \young(12) \right) &= \left\{ \left( \young(12), \young(12) \right), \left( \young(1,2) , \young(1,2) \right) \right\}; \\
F_{2,2}^{-1}\left( \young(1,2) \right) &= \left\{ \left( \young(12), \young(1,2) \right), \left( \young(1,2) , \young(12) \right) \right\}.
\end{align*}

%This is because the Kronecker coefficients only depend on the shapes of the pairs of tableaux, and in that . Here there is only one possible fil for the pairs of shapes. %de la forme des tableaux et que toutes les paires possibles de tab apparaissent.

When $m=3$, there are many possible choices. The map below is such that if $(t_1,t_2) \in F_{2,3}^{-1}(t)$, then $(t_1\downarrow_2,t_2\downarrow_2) \in F_{2,2}^{-1}(t\downarrow_2)$. %has pairs with the same $2$-subtypes for types $\young(123)$ and $\young(12,3)$, and opposite ^{-1}$2$-subtypes for types $\young(1,2,3)$ and $\young(13,2)$. 
However this choice still doesn't determine all pairs.

\begin{align*}
F_{3,2}^{-1}\left( \young(123) \right) &= \left\{ \left( \young(123), \young(123) \right), \left( \young(12,3), \young(12,3) \right), \left( \young(1,2,3) , \young(1,2,3) \right) \right\}; \\
F_{3,2}^{-1}\left( \young(12,3) \right) &= \left\{ \left( \young(12,3), \young(123) \right), \left( \young(123), \young(12,3) \right), \left( \young(1,2,3) , \young(13,2) \right), \left( \young(13,2) , \young(1,2,3) \right), \left( \young(13,2) , \young(13,2) \right) \right\}; \\
F_{3,2}^{-1}\left( \young(13,2) \right) &= \left\{ \left( \young(13,2), \young(123) \right), \left( \young(123), \young(13,2) \right), \left( \young(1,2,3) , \young(12,3) \right), \left( \young(12,3) , \young(1,2,3) \right), \left( \young(13,2) , \young(12,3) \right) \right\}; \\
F_{3,2}^{-1}\left( \young(1,2,3) \right) &= \left\{ \left( \young(123), \young(1,2,3) \right), \left( \young(12,3), \young(13,2) \right), \left( \young(1,2,3) , \young(123) \right) \right\}.
\end{align*}

%Of course, one could make another choice of tableaux, as Kronecker coefficients only tells us about the shapes of the tableaux involved.

For $m=4$, there is $10$ standard tableaux with $4$ cells, so there are $10^2 = 100$ pairs of tableaux to divide in $10$ sets. While it is not too difficult to construct a Kronecker map $F_{4,2}$ in that case, we can see that the difficulty increases rapidly.

Note that all possible pairs of tableaux will always appear exactly once.\\

Once $F_{m,2}$ has been found, then using proposition~\ref{Kronecker} recursively, we can define the map $F_{m,k}$ to be the map $F_{m,2} \circ F_{m,2} \times \text{Id} \circ \ldots \circ F_{m,2} \times \text{Id}^{m-1}$. Thus, the only difficulty to construct $F_{m,k}$ is to construct the map $F_{m,2}$.

\begin{ex}\label{ex:KroneckerMap}
Using the map $F_{3,2}$ as above, then:
\begin{align*}
F_{3,3}\left( \young(12,3),\young(13,2), \young(12,3) \right) &= F_{3,2}\left( F_{3,2} \left( \young(12,3),\young(13,2) \right), \young(12,3) \right) \\
&= F_{3,2} \left( \young(1,2,3), \young(12,3) \right) \\
&= \young(13,2).
\end{align*}

It means that the copy of $s_{21}[f_1]s_{21}[f_2]s_{21}[f_3]$ indexed respectively by tableaux $\young(12,3),\young(13,2)$ and $\young(12,3)$ contributes to the copy of $s_{21}[f_1f_2f_3]$ indexed by $\young(13,2)$.
\end{ex}

%This means that we can attribute to each Schur function in the Schur expansion of $(f_1 \ldots f_k)^m$ not only a type (as explained in section~\ref{section:plethysm}), but also a \textit{multitype}, \textrm{i.e.} a tuple

\subsection{The case $h_{\lambda}^m$}

Let $\lambda = (\lambda_1, \ldots, \lambda_k)$ be a partition. Recall that $h_{\lambda} = h_{\lambda_1} \ldots h_{\lambda_k}$ We denote \[\lambda^m = (\underbrace{\lambda_1, \ldots, \lambda_1}_{m \text{ times}}, \ldots, \underbrace{\lambda_k, \ldots, \lambda_k}_{m \text{ times}})\] the partition where each part is repeated $m$ times. We use this notation so that $h_{\lambda}^m = h_{\lambda^m}$. 

We show how to use the plethystic decomposition of $h_n^m$ and the map $F_{m,k}$ from the previous section to obtain the plethystic decomposition of $h_{\lambda}^m$. This has been done in \cite{MaasTetreault} for $m=2$, and we generalize this construction. This generalization is somewhat straightforward; see the article for a more detailed exposition.\\

By the Pieri rule, we have that:

\[
h_{\lambda}^m = \displaystyle \sum_{\mu \vdash m|\lambda|} K_{\lambda^m}^{\mu} s_{\mu}
\]

We can decompose a tableau of content $\lambda^m$ into a tuple of tableaux by considering subtableaux with some subsets of entries and using Schützenberger's \textit{jeu de taquin} \cite{Schutzenberger} %. This operation is a way 
to rectify them.
%a skew tableau. %, making it a (straight) tableau having the "good" properties 
See \cite{Fulton} or \cite{Sagan} for more details about jeu de taquin and its properties. A jeu de taquin slide starts at an inner corner of the tableau, exchanges that empty cell with the cell directly under it or to its right, respecting the conditions on rows and columns for tableaux, and continues exchanges until the empty cell can no longer be moved. Below is illustrated the jeu de taquin slide starting at the inner corner in red:

\[
\gyoung(::;1,:!\rd;!\wt;2,:;1;3,;2;4)\rightarrow \gyoung(::;1,:;1;2,:!\rd;!\wt;3,;2;4)\rightarrow \gyoung(::;1,:;1;2,:;3!\rd;!\wt,;2;4).
\]

\begin{comment}
\[
\begin{ytableau} \none & \none & 1 \\ \none & *(red) & 2 \\ \none & 1 & 3 \\ 2 & 4 & \none \end{ytableau} \rightarrow \begin{ytableau} \none & \none & 1 \\ \none & 1 & 2 \\ \none & *(red) & 3 \\ 2 & 4 & \none \end{ytableau} \rightarrow \begin{ytableau} \none & \none & 1 \\ \none & 1 & 2 \\ \none & 3 & *(red) \\ 2 & 4 & \none \end{ytableau}
\]
\end{comment}

Doing this until there are no more inner corners, we obtain a (straight) tableau. Schützenberger proved \cite{Schutzenberger} that no matter the order of the slides applied of a skew tableau $T$, we always obtain the same \textit{rectification of $T$}, denoted $\text{Rect}(T)$. The rectification of the skew tableau above is $\young(11,22,3,4)$.

Using this, let $\mu \vdash m|\lambda|$ and $T \in \SSYT(\mu,\lambda^m)$. For $1 \leq i \leq k$, let $T^{(i)}$ be the skew tableau obtained by taking the cells with entries in $\{(i-1)m + 1, (i-1)m + 2, \ldots, im\}$. Then, let $\widetilde{T}^{(i)}$ be the tableau obtained by subtracting $(i-1)m$ to each entry; this way, $\widetilde{T}^{(i)}$ has content $(\lambda_i)^m$. Finally, let $T_i = \text{Rect}(\widetilde{T}^{(i)})$. We define the decomposition $\text{Dec}_m(T)$ to be the tuple $(T_1, \ldots, T_k)$. We can then determine the type of each $T_i$ according to theorem~1, and use the Kronecker map $F_{3,2}$ given in section~\ref{section:Kronecker} to get the type of $T$.

\begin{comment}
\begin{figure}[h!]
    \centering
    \begin{align*}
    \begin{ytableau} 1 & 1 & 1 & 2 & 3 & 3 \\ 2 & 2 & 4 & 4 & 6 & 9 \\ 3 & 5 & 5 & 8 & 8 \\ 6 & 7 & 7 \\ 9 \end{ytableau} 
    &\xrightarrow{\Big{T^{(i)}}} \left( \begin{ytableau} 1 & 1 & 1 & 2 & 3 & 3 \\ 2 & 2 \\ 3 \end{ytableau} , \begin{ytableau} \none & \none & 4 & 4 & 6 \\ \none & 5 & 5 \\ 6 \end{ytableau} , \begin{ytableau} \none & \none & \none & \none & \none & 9 \\ \none & \none & \none & 8 & 8 \\ \none & 7 & 7 \\ 9 \end{ytableau} \right) \\ 
    &\xrightarrow{\Big{\widetilde{T}^{(i)}}} \left( \begin{ytableau} 1 & 1 & 1 & 2 & 3 & 3 \\ 2 & 2 \\ 3 \end{ytableau} , \begin{ytableau} \none & \none & 1 & 1 & 3 \\ \none & 2 & 2 \\ 3 \end{ytableau} , \begin{ytableau} \none & \none & \none & \none & \none & 3 \\ \none & \none & \none & 2 & 2 \\ \none & 1 & 1 \\ 3 \end{ytableau} \right) \\ 
    &\xrightarrow{\Big{T_i}} \left( \begin{ytableau} 1 & 1 & 1 & 2 & 3 & 3 \\ 2 & 2 \\ 3 \end{ytableau} , \begin{ytableau} 1 & 1 & 2 \\ 2 & 3 \\ 3 \end{ytableau} , \begin{ytableau} 1 & 1 & 2 & 2 & 3 \\ 3  \end{ytableau} \right) \\
    &\xrightarrow{\text{Types}} \left( \begin{ytableau} 1 & 3 \\ 2 \end{ytableau} , \begin{ytableau} 1 & 2 \\ 3 \end{ytableau} , \begin{ytableau} 1 & 3 \\ 2  \end{ytableau} \right) \\
    &\xrightarrow{F_{3,2}} \begin{ytableau} 1 & 3 \\ 2 \end{ytableau}
    \end{align*}
    \caption{Attributing a type to a tableau of shape $(6,6,5,3,1)$ and content $(3,2,2)^3$.}
    \label{fig:Dec}
\end{figure}
\end{comment}

It is easy to derive from results in \cite{MaasTetreault} that:

\begin{prop}
Let $\lambda = (\lambda_1, \ldots, \lambda_k)$ be a partition and $m$ a positive integer. For $1 \leq i \leq k$, let $T_i$ be a tableau of content $(\lambda_i)^m$. Then, we have:
\[
\displaystyle \prod_{i=1}^k s_{\text{shape}(T_i)} = \sum_{T} s_{\text{shape}(T)},
\]
where the sum is over all $T$ of content $(\lambda)^m$ such that $\text{Dec}_m(T) = (T_1, \ldots, T_k)$.
\end{prop}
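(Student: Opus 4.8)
The plan is to prove the identity by induction on $k$, using only the Littlewood--Richardson rule together with the classical jeu de taquin fact that rectification transports Littlewood--Richardson counts. First I would unwind $\text{Dec}_m$: for $T\in\SSYT(\mu,\lambda^m)$ write $\mu^{(i)}$ for the shape of the subtableau of $T$ formed by the cells with entries $\le im$, so that each layer $T^{(i)}$ is a skew tableau of shape $\mu^{(i)}/\mu^{(i-1)}$ (with $\mu^{(0)}=\varnothing$, $\mu^{(k)}=\mu$), $\widetilde T^{(i)}$ is obtained from $T^{(i)}$ by subtracting $(i-1)m$ from every entry, and $T_i=\text{Rect}(\widetilde T^{(i)})$. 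Since $T^{(1)}=\widetilde T^{(1)}$ is already of straight shape, the relation $T_1=\text{Rect}(\widetilde T^{(1)})$ forces $T^{(1)}=T_1$ to occupy the corner of $\mu$.

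\medskip

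The one external ingredient I would cite is classical (see \cite{Fulton}): for a fixed straight shape $\rho$ and a fixed tableau $U$ of straight shape $\beta$, the number of skew tableaux of shape $\mu/\rho$ whose rectification equals $U$ is the Littlewood--Richardson coefficient $c^{\mu}_{\rho\beta}=\langle s_\rho s_\beta,\,s_\mu\rangle$, and in particular this count does not depend on which tableau of shape $\beta$ is taken for $U$; equivalently $\sum_\mu\#\{S:\text{shape}(S)=\mu/\rho,\ \text{Rect}(S)=U\}\,s_\mu=s_\rho\,s_{\text{shape}(U)}$.

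\medskip

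For $k=1$ the content is $(\lambda_1)^m$, so $T$ is straight, $\text{Rect}(T)=T$, and $\text{Dec}_m(T)=(T_1)$ forces $T=T_1$, so the right-hand side reduces to $s_{\text{shape}(T_1)}$. For the inductive step, put $\lambda'=(\lambda_1,\dots,\lambda_{k-1})$; given $T$ of content $\lambda^m$, let $T'$ be the subtableau of $T$ on the cells with entries $\le(k-1)m$. Then $T'$ is a semistandard tableau of straight shape $\rho:=\mu^{(k-1)}$ and content $(\lambda')^m$, its layers are exactly $T^{(1)},\dots,T^{(k-1)}$, so $\text{Dec}_m(T')=(T_1,\dots,T_{k-1})$. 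The map $T\mapsto(T',T^{(k)})$ is a bijection from $\{T:\text{Dec}_m(T)=(T_1,\dots,T_k)\}$ onto the set of pairs $(T',S)$ where $T'$ has content $(\lambda')^m$ with $\text{Dec}_m(T')=(T_1,\dots,T_{k-1})$ and $S$ is a skew tableau of shape $\text{shape}(T)/\text{shape}(T')$ with $\text{Rect}(\widetilde S)=T_k$; reconstruction of $T$ from such a pair is immediate, since the entries of $S$ all exceed those of $T'$, so their union is automatically semistandard. Grouping $\sum_T s_{\text{shape}(T)}$ according to $\rho=\text{shape}(T')$, and writing $a_\rho$ for the number of $T'$ of shape $\rho$, content $(\lambda')^m$, with $\text{Dec}_m(T')=(T_1,\dots,T_{k-1})$, the jeu de taquin fact (applied with $U=T_k$, since $\text{Rect}(\widetilde S)=T_k$ is, after the order-preserving reindexing of entries, exactly the condition $\text{Rect}(\text{reindexed }S)=T_k$) gives
\[
\sum_{T}s_{\text{shape}(T)}=\sum_{\rho}a_\rho\sum_{\mu}c^{\mu}_{\rho,\,\text{shape}(T_k)}\,s_\mu=\Bigl(\sum_{\rho}a_\rho s_\rho\Bigr)s_{\text{shape}(T_k)}=\Bigl(\sum_{T'}s_{\text{shape}(T')}\Bigr)s_{\text{shape}(T_k)},
\]
and the induction hypothesis rewrites the last parenthesis as $\prod_{i=1}^{k-1}s_{\text{shape}(T_i)}$, yielding $\prod_{i=1}^{k}s_{\text{shape}(T_i)}$.

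\medskip

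I expect no genuine difficulty here: the jeu de taquin input is standard and the remainder is bookkeeping. The single point to check with care is that $T'$ really is a straight-shape semistandard tableau with the asserted decomposition and that $T\mapsto(T',T^{(k)})$ is a bijection, both of which rest on the elementary observation that in a semistandard tableau the cells with entries $\le\ell$ always form a partition shape. Alternatively, one may simply invoke the $m=2$ analogue established in \cite{MaasTetreault}, whose proof carries over verbatim.
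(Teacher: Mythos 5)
Your proof is correct. The one external ingredient you invoke --- that for a fixed $U$ of straight shape $\beta$ the number of skew semistandard tableaux of shape $\mu/\rho$ rectifying to $U$ is $c^{\mu}_{\rho\beta}$, independently of the choice of $U$ --- is indeed standard, and the bookkeeping around it (the layers $T^{(i)}$ sit in disjoint entry ranges, so $T\mapsto(T',T^{(k)})$ is a bijection and semistandardness of the reassembled tableau is automatic) is sound. The paper does not actually write out a proof: it asserts that the statement "is easy to derive from results in \cite{MaasTetreault}" and, in the accompanying remark, describes the construction used there, namely forming the skew tableau $T_1\ast T_2\ast\dots\ast T_k$ (whose Schur function is the product $\prod_i s_{\text{shape}(T_i)}$) and identifying the tableaux $T$ in the sum as the jeu de taquin rectifications of these skew tableaux. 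So the two arguments rest on the same machinery but run the bijection in opposite directions: the paper builds each $T$ from the tuple by rectifying a diagonal concatenation, while you decompose $T$ into layers and count, inductively on $k$, the skew tableaux with prescribed rectification via Littlewood--Richardson coefficients. What your version buys is a self-contained proof that does not defer to the $m=2$ companion paper; what the paper's route buys is a one-line reduction to an already-established construction. Your closing remark that the $m=2$ proof of \cite{MaasTetreault} carries over verbatim is precisely the paper's own position.
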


\begin{rem}
    In \cite{MaasTetreault}, we construct skew tableaux $T_1\ast T_2 \ast \dots \ast T_k$, where $T_i$ has filling $(\lambda_i)^2$, so only the numbner of different entries differs here. This skew tableau is then rectified using jeu de taquin to obtain the straight tableau $T$. The tableaux $T$ we sum on are then exactly tableaux of filling $\lambda^m$ which are rectifications of such skew tableaux.
\end{rem}

As a corollary, we obtain:

\begin{cor}
Suppose that we know the plethystic decomposition of $h_n^m$ for all $n$, \textrm{i.e.} we can attribute types to tableaux of content $(n)^m$. Then, for any $(t_1, \ldots, t_k) \in \SYT_m^k$, we have
\[
\prod_{i=1}^k s_{\text{shape}(t_i)}[h_{\lambda_i}] = \sum_{T} s_{\text{shape}(T)},
\]
where the sum is over all tableaux $T$ of content $\lambda^m$ such that if $\text{Dec}_m(T) = (T_1, \ldots, T_k)$, then $T_i$ has type $t_i$ for all $i$.
\end{cor}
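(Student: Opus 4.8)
The plan is simply to chain the defining identity of a type attribution with the previous proposition and then reindex a double sum. First I fix a tuple $(t_1,\dots,t_k)\in\SYT_m^k$. By the definition of type attribution given in Section~\ref{section:plethysm}, for each $i$ the plethysm $s_{\text{shape}(t_i)}[h_{\lambda_i}]$ equals the sum of $s_{\text{shape}(T_i)}$ over all semistandard tableaux $T_i$ of content $(\lambda_i)^m$ whose type is $t_i$. Multiplying these $k$ identities and expanding the product of sums gives
\[
\prod_{i=1}^k s_{\text{shape}(t_i)}[h_{\lambda_i}]
 \;=\; \sum_{(T_1,\dots,T_k)}\ \prod_{i=1}^k s_{\text{shape}(T_i)},
\]
where $(T_1,\dots,T_k)$ ranges over all $k$-tuples with $T_i$ of content $(\lambda_i)^m$ and $\text{type}(T_i)=t_i$.

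Next I would apply the previous proposition to each summand: it rewrites $\prod_i s_{\text{shape}(T_i)}$ as $\sum_T s_{\text{shape}(T)}$, the sum taken over tableaux $T$ of content $\lambda^m$ with $\text{Dec}_m(T)=(T_1,\dots,T_k)$. Substituting turns the right-hand side into the double sum
\[
\sum_{\substack{(T_1,\dots,T_k)\\ \text{type}(T_i)=t_i}}\ \ \sum_{\substack{T\ \text{of content}\ \lambda^m\\ \text{Dec}_m(T)=(T_1,\dots,T_k)}} s_{\text{shape}(T)}.
\]

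The last step is to collapse this double sum. Here I would invoke Schützenberger's theorem that the rectification of a skew tableau is independent of the order of the jeu de taquin slides; applied to each $\widetilde{T}^{(i)}$, this says that $T\mapsto\text{Dec}_m(T)$ is a single-valued map on the set of tableaux of content $\lambda^m$. Consequently every such $T$ occurs in the inner sum for exactly one tuple, namely $(T_1,\dots,T_k)=\text{Dec}_m(T)$, so the double sum is exactly the sum of $s_{\text{shape}(T)}$ over all $T$ of content $\lambda^m$ such that, writing $\text{Dec}_m(T)=(T_1,\dots,T_k)$, each $T_i$ has type $t_i$. That is the asserted formula.

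The argument is essentially bookkeeping, so the only point I expect to need care is this collapsing step: one must confirm both that $\text{Dec}_m$ is well-defined (no $T$ counted twice, and no inconsistency between the outer and inner sums) and that the hypothesis of the previous proposition — that each $T_i$ has content $(\lambda_i)^m$ — is automatically satisfied on every summand. The latter holds because jeu de taquin preserves content, and subtracting $(i-1)m$ from the entries of $T^{(i)}$ shifts its content down to $(\lambda_i)^m$; since type attribution (and hence the hypothesis of the corollary) is defined precisely for tableaux of that content, the appeal to the assumed plethystic decomposition of $h_{\lambda_i}^m$ is legitimate. Everything else is a formal manipulation of finite $\C$-linear combinations of Schur functions.
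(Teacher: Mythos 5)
Your proposal is correct and is exactly the derivation the paper intends: the corollary is stated as an immediate consequence of the preceding proposition, obtained by expanding each $s_{\text{shape}(t_i)}[h_{\lambda_i}]$ via the definition of a type attribution, applying the proposition to each product $\prod_i s_{\text{shape}(T_i)}$, and collapsing the double sum using the well-definedness of $\text{Dec}_m$. Your care about the collapsing step and the content of the $T_i$ is appropriate but raises no issues beyond what the paper's setup already guarantees.
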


Then, by using results from the previous section, we finally obtain that:

\begin{theo}
Suppose that we know the plethystic decomposition of $h_n^m$ for all $n$, and that we know a Kronecker map $F_{m,k}: \SYT_m^k \rightarrow \SYT_m$ (as defined in the previous section). Let $\lambda = (\lambda_1, \ldots, \lambda_k)$ be a partition and $T$ be a tableau of content $\lambda^m$. If $\text{Dec}_m(T) = (T_1, \ldots, T_k)$, let $t_i$ be the type of $T_i$. Define $t = F_{m,k}(t_1, \ldots, t_k)$ to be the type of $T$. Then:
\[
s_{\text{shape}(t)}[h_{\lambda}] = \displaystyle \sum_T s_{\text{shape}(T)},
\]
where the sum is over all tableaux $T$ of content $\lambda^m$ having type $t$.
\end{theo}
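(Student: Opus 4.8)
The plan is to assemble the theorem from the three results that immediately precede it: the plethystic decomposition of $h_n^m$ (assumed known, via type attribution to tableaux of content $(n)^m$), Proposition~\ref{Kronecker} together with the recursive construction of the Kronecker map $F_{m,k}$, and the Corollary just above, which already packages the decomposition step $\mathrm{Dec}_m$ with jeu de taquin. The statement to prove is really the claim that these pieces compose correctly, so the proof is a bookkeeping argument organized around the two ways of expanding $h_\lambda^m = h_{\lambda^m}$ as a sum of plethysms.

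First I would start from the Pieri expansion $h_\lambda^m = \sum_{\mu\vdash m|\lambda|} K_{\lambda^m}^\mu s_\mu$ and, on the other side, from the two plethystic decompositions of $(f_1\cdots f_k)^m$ displayed in Section~\ref{section:Kronecker}, specialized to $f_i = h_{\lambda_i}$. Using Proposition~\ref{Kronecker} recursively (exactly as in the definition $F_{m,k} = F_{m,2}\circ (F_{m,2}\times\mathrm{Id})\circ\cdots\circ(F_{m,2}\times\mathrm{Id}^{k-2})$), one gets
\[
s_{\mathrm{shape}(t)}[h_\lambda] \;=\; \sum_{(t_1,\dots,t_k)\in F_{m,k}^{-1}(t)} \;\prod_{i=1}^k s_{\mathrm{shape}(t_i)}[h_{\lambda_i}].
\]
This is the crucial algebraic identity; it follows because $F_{m,k}$ is a Kronecker map, i.e.\ the number of tuples in $F_{m,k}^{-1}(t)$ with prescribed shapes $(\alpha_1,\dots,\alpha_k)$ equals the iterated Kronecker coefficient coupling $\alpha_1,\dots,\alpha_k$ to $\mathrm{shape}(t)$, which is exactly the coefficient produced by iterating Proposition~\ref{Kronecker}.

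Next I would invoke the Corollary: for each fixed tuple $(t_1,\dots,t_k)$,
\[
\prod_{i=1}^k s_{\mathrm{shape}(t_i)}[h_{\lambda_i}] \;=\; \sum_{\substack{T:\ \mathrm{Dec}_m(T)=(T_1,\dots,T_k)\\ \mathrm{type}(T_i)=t_i\ \forall i}} s_{\mathrm{shape}(T)}.
\]
Substituting this into the identity above and summing over $(t_1,\dots,t_k)\in F_{m,k}^{-1}(t)$, the right-hand side becomes a sum of $s_{\mathrm{shape}(T)}$ over all tableaux $T$ of content $\lambda^m$ for which $\mathrm{Dec}_m(T)=(T_1,\dots,T_k)$ with $(\mathrm{type}(T_1),\dots,\mathrm{type}(T_k))\in F_{m,k}^{-1}(t)$ — that is, precisely the tableaux $T$ whose type, as \emph{defined in the theorem statement} to be $F_{m,k}(\mathrm{type}(T_1),\dots,\mathrm{type}(T_k))$, equals $t$. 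Here one must check that every tableau $T$ of content $\lambda^m$ arises exactly once in this double sum: it does, because $\mathrm{Dec}_m$ assigns to $T$ a well-defined tuple $(T_1,\dots,T_k)$ (Schützenberger's confluence of jeu de taquin, cited earlier), each $T_i$ has a well-defined type by the assumed decomposition of $h_{\lambda_i}^m = h_{|\lambda_i|}^m$ rephrased via content $(\lambda_i)^m$, and $F_{m,k}$ is a function. This yields $s_{\mathrm{shape}(t)}[h_\lambda] = \sum_T s_{\mathrm{shape}(T)}$ over all $T$ of content $\lambda^m$ of type $t$, which is the assertion.

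The main obstacle is not any single hard step but making the "exactly once" accounting airtight: one has to be careful that the partition of $\SSYT(\mu,\lambda^m)$ induced by $T\mapsto(\mathrm{type}(T_1),\dots,\mathrm{type}(T_k))\mapsto F_{m,k}(\cdots)$ is genuinely a type attribution in the sense of Section~\ref{section:plethysm}, i.e.\ that the cardinalities match $\langle s_\mu, s_{\mathrm{shape}(t)}[h_\lambda]\rangle$ and not merely that a decomposition formula holds formally. This reduces to combining the Corollary's bijective content (which is where \cite{MaasTetreault} and jeu de taquin do the real work) with the fact that iterating Proposition~\ref{Kronecker} genuinely reproduces the Kronecker coefficients; once both are granted, the chain of equalities above closes and the theorem follows. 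I would also remark that the same argument gives the $e_\lambda^m$ version after applying the $\omega$ dictionary of Section~\ref{section:elementary}, and specialize to $m=3$ using the explicit $F_{3,2}$ and Theorem~\ref{Conj:ColoringS3S111} to obtain the advertised explicit decomposition of $h_\lambda^3$ and $e_\lambda^3$.
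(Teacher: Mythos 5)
Your proposal is correct and follows exactly the route the paper intends: the paper gives no separate proof of this theorem, stating only that it follows "by using results from the previous section," i.e.\ by substituting the Corollary's expansion of $\prod_i s_{\text{shape}(t_i)}[h_{\lambda_i}]$ into the defining identity of the Kronecker map $F_{m,k}$ and observing that each tableau $T$ is counted exactly once since $\mathrm{Dec}_m$ and the type attributions are well-defined functions. Your write-up makes this bookkeeping explicit, including the "exactly once" accounting, and matches the paper's argument.
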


Thus, to understand the plethystic decomposition of $h_{\lambda}^m$, we only need to understand the plethystic decomposition of $h_n^m$ (for all $n$) and a Kronecker map $F_{2,m}$, and then apply the results of this section. This has been done in this article for $m=3$, so we can understand the plethystic decomposition of $h_{\lambda}^3$. See figure~\ref{fig:Dec} as an example, where the types are attributed as in section~\ref{section:Types}, and $F_{3,2}$ has been computed in example~\ref{ex:KroneckerMap}.

\begin{figure}[h]
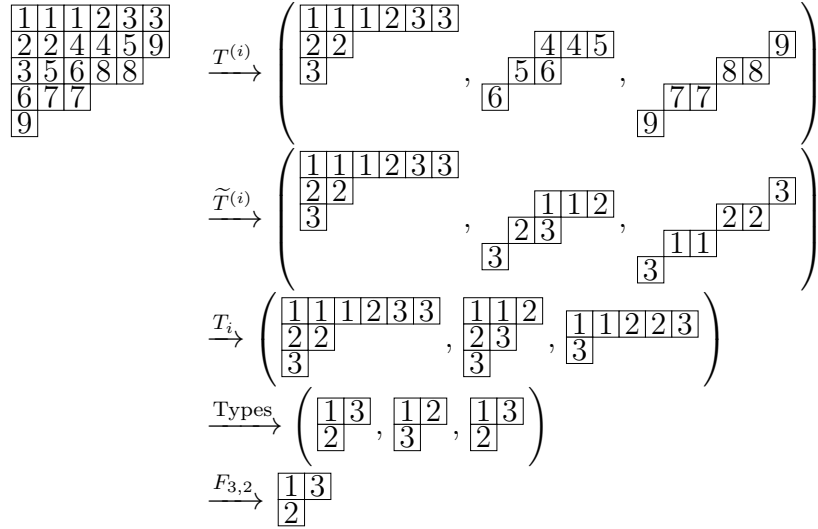

    \begin{center}
    \begin{tabular}{ll}
    \gyoung(;1;1;1;2;3;3,;2;2;4;4;5;9,;3;5;6;8;8,;6;7;7,;9)
    &$\xrightarrow{T^{(i)}} \left( \gyoung(;1;1;1;2;3;3,;2;2,;3,:,:), \gyoung(::::::,::;4;4;5,:;5;6,;6,:), \gyoung(::::::,:::::;9,:::;8;8,:;7;7,;9) \right)$ \\ 
    &$\xrightarrow{\widetilde{T}^{(i)}} \left( \gyoung(;1;1;1;2;3;3,;2;2,;3,:,:), \gyoung(::::::,::;1;1;2,:;2;3,;3), \gyoung(::::::,:::::;3,:::;2;2,:;1;1,;3) \right)$ \\ 
    &$\xrightarrow{T_i} \left( \gyoung(;1;1;1;2;3;3,;2;2,;3), \gyoung(;1;1;2,;2;3,;3), \gyoung(;1;1;2;2;3,;3) \right)$ \\
    &$\xrightarrow{\text{Types}} \left( \young(13,2), \young(12,3),\young(13,2) \right)$ \\
    &$\xrightarrow{F_{3,2}} \young(13,2)$\\
    \end{tabular}
    \end{center}
    \caption{Attributing a type to a tableau of shape $(6,6,5,3,1)$ and content $(3,2,2)^3$.}
    \label{fig:Dec}
\end{figure}

\subsection{The case $e_{\lambda}^m$}

The same technique can be applied to understand the plethystic decomposition of $e_{\lambda}^m$, by applying the $\omega$ involution. We have seen how to obtain the plethystic decomposition of $e_n^m$ from the one of $h_n^m$ in section~\ref{section:elementary}, by attributing types to conjugate tableaux of content $(n)^m$. 

So we have:

\[
e_{\lambda}^m = \omega(h_{\lambda}^m) = \displaystyle \sum_{\mu \vdash m|\lambda|} K_{\lambda^m}^{\mu^\prime} s_{\mu}.
\]

Recall that $K_{\lambda^m}^{\mu^{\prime}}$ counts the number of conjugate tableaux of shape $\mu$ and content $\lambda^{m}$. Let $T^{\prime}$ be such a conjugate tableau. As before, define ${T^{\prime}}^{(i)}$ to be the skew conjugate tableau obtained by taking the cells with entries in $\{(i-1)m+1, (i-1)m+2, \ldots, im\}$. Then, let $\widetilde{T^{\prime}}^{(i)}$ be the skew conjugate tableau obtained by subtracting $(i-1)m$ to every entry of ${T^{\prime}}^{(i)}$. Finally, let $T_{i}^{\prime} = {\text{Rect}({({\widetilde{T^{\prime}}}^{(i)})}{\prime} )}{\prime}$ (so we rectify the conjugate of $\widetilde{T^{\prime}}^{(i)}$, and conjugate again). We define $\text{Dec}_m^{\prime} (T^{\prime}) = (T_1^{\prime}, \ldots, T_k^{\prime})$. Then, we have the following theorem: 

\begin{theo}
Suppose that we know the plethystic decomposition of $e_n^{m}$ for all $n$ (see section~\ref{section:elementary}), and that we know a Kronecker map $F_{m,k}: \SYT_m^{k} \rightarrow \SYT_m$. Let $\lambda = (\lambda_1, \ldots, \lambda_k)$ be a partition, and $T^{\prime}$ be a conjugate tableau of content $\lambda^{m}$. If $\text{Dec}_m^{\prime} (T^{\prime}) = (T_1^{\prime}, \ldots, T_k^{\prime})$, let $t_i$ be the type of $T_i^{\prime}$. Define $t = F_{m,k}(t_1, \ldots, t_k)$ to be the type of $T^{\prime}$. Then,
\[
s_{\text{shape}(t)}[e_{\lambda}] = \displaystyle \sum_{T^{\prime}} s_{\text{shape}(T^{\prime})},
\]
where the sum is over all conjugate tableaux $T^{\prime}$ of content $\lambda^{m}$ having type $t$.
\end{theo}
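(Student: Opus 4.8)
The plan is to run the same three-step development used for $h_\lambda^m$ (the auxiliary Proposition, the Corollary, then the Kronecker map), transporting each statement through the involution $\omega$. The first thing I would record is that $\mathrm{Dec}_m'$ is the conjugate of $\mathrm{Dec}_m$: if $T$ is a tableau of content $\lambda^m$ and $T^c$ is its conjugate tableau, then, because conjugation acts entrywise and commutes with subtracting a fixed constant from all entries, the skew sub-tableau of $T^c$ attached to the $i$-th block is the conjugate of the one for $T$; since $\mathrm{Dec}_m'$ is defined by conjugating that skew sub-tableau, rectifying, and conjugating again, it rectifies exactly the same straight-shape skew tableau $\widetilde{T}^{(i)}$ that $\mathrm{Dec}_m$ does, so $\mathrm{Dec}_m'(T^c) = (T_1^c, \dots, T_k^c)$ where $\mathrm{Dec}_m(T) = (T_1, \dots, T_k)$. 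I would pair this with the standard facts that $\mathrm{shape}(T^c) = \mathrm{shape}(T)'$ and that $T \mapsto T^c$ is a bijection from $\SSYT(\mu, \lambda^m)$ onto the conjugate tableaux of shape $\mu'$ and content $\lambda^m$ (and likewise for the one-part contents $(\lambda_i)^m$).

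Next I would prove the $e$-analogue of the Proposition preceding the Corollary: for arbitrary conjugate tableaux $T_i'$ of content $(\lambda_i)^m$, one has $\prod_{i=1}^k s_{\mathrm{shape}(T_i')} = \sum_{T'} s_{\mathrm{shape}(T')}$, summed over conjugate tableaux $T'$ of content $\lambda^m$ with $\mathrm{Dec}_m'(T') = (T_1', \dots, T_k')$. This is obtained by applying $\omega$ to the $h$-version of that Proposition, using $\omega(s_\nu) = s_{\nu'}$, and then reindexing by $T_i' = T_i^c$ and $T' = T^c$ with the help of the facts above; as $T_i$ runs over all tableaux of content $(\lambda_i)^m$, the $T_i'$ run over all conjugate tableaux of that content, so the identity is valid for every tuple.

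Then the argument closes exactly as in the $h$ case. Using the hypothesis that the plethystic decomposition of $e_n^m$ is known --- that is, the type assignment on conjugate tableaux of content $(n)^m$ from section~\ref{section:elementary}, which has the parity twist built in --- I expand each $s_{\mathrm{shape}(t_i)}[e_{\lambda_i}]$ as a sum of $s_{\mathrm{shape}(T_i')}$ over conjugate tableaux of type $t_i$; substituting the $e$-version of the Proposition yields the $e$-version of the Corollary, $\prod_{i=1}^k s_{\mathrm{shape}(t_i)}[e_{\lambda_i}] = \sum_{T'} s_{\mathrm{shape}(T')}$ over conjugate $T'$ of content $\lambda^m$ whose decomposition has $i$-th component of type $t_i$. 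Finally, iterating Proposition~\ref{Kronecker} (which holds for any symmetric functions, in particular the partial products of the $e_{\lambda_i}$) gives $s_{\mathrm{shape}(t)}[e_\lambda] = \sum_{(t_1, \dots, t_k) \in F_{m,k}^{-1}(t)} \prod_{i=1}^k s_{\mathrm{shape}(t_i)}[e_{\lambda_i}]$, and combining with the $e$-Corollary collapses the double sum to $\sum_{T'} s_{\mathrm{shape}(T')}$ over conjugate $T'$ of content $\lambda^m$ with $F_{m,k}$ of the tuple of types of $\mathrm{Dec}_m'(T')$ equal to $t$, which is the definition of $\mathrm{type}(T') = t$.

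I do not expect a genuine obstacle: this is a transport argument, and the paper already flags that the same technique applies. The step needing the most care is the conjugation-compatibility $\mathrm{Dec}_m'(T^c) = (\mathrm{Dec}_m(T))^c$ --- one must be sure the ``conjugate, rectify, conjugate'' recipe for $\mathrm{Dec}_m'$ genuinely rectifies the same straight-shape skew tableau as $\mathrm{Dec}_m$, so that no claim about jeu de taquin commuting with conjugation is ever needed --- together with checking that the parity twist hidden inside ``type of a conjugate tableau of content $(n)^m$'' is precisely the one produced on the $h$ side by $\omega$, so that feeding these twisted types into $F_{m,k}$ returns a type of $T'$ compatible with $e_\lambda^m = \sum_{\nu \vdash m} f^\nu s_\nu[e_\lambda]$.
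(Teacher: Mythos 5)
Your proposal is correct and follows the same route as the paper, which proves this theorem simply by applying the $\omega$ involution to the results of the preceding section on $h_\lambda^m$. You merely make explicit the details the paper leaves implicit (the compatibility $\text{Dec}_m'(T^c) = (\text{Dec}_m(T))^c$, the transport of the Proposition and Corollary through $\omega$, and the reuse of Proposition~\ref{Kronecker}), all of which are sound.
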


This theorem is obtained by applying the $\omega$ involution on results of the previous section. 

\section{Conclusion}

We have shown in this article two possible plethystic decompositions for $h_n^3$, using various techniques. Moreover, we have shown how to extend these results to understand the plethystic decomposition of $e_n^3$, $h_{\lambda}^3$ and $e_{\lambda}^3$.\\

In the future, we hope to be able to understand the plethystic decomposition of $h_n^4$. Using results in this article, it would imply a plethystic decomposition of $e_n^4$, and by constructing a Kronecker map $F_{4,2}$, of $h_{\lambda}^4$ and $e_{\lambda}^4$. We also hope to be able to use these results for the plethystic decomposition of $s_{\lambda}^m$ when we know the one of $h_n^m$.

\section*{Acknowledgements}
The authors are grateful to Franco Saliola for his support, and to Mike Zabrocki who shared some of his related conjectures. FMG received support from NSERC.
%We thank the anonymous reviewer for comments to help improve exposition.

\bibliographystyle{alpha}
\bibliography{main.bib}

\newpage

\renewcommand\thesection{\Alph{section}}
\setcounter{section}{1}

\section*{Appendix : Counting integer points on $\mathbb{P}_n$}
\label{annex:NumberOfIntegerPoints}

The integer points on the polytope $\mathbb{P}_n$, defined in section~\ref{section:polytopes} count tableaux with content $(n)^3$, and of any shape $\nu$. We have seen that $\mathbb{P}_n$ can be visualized through its projection onto the union of polytopes $P_m$, which represent tableaux of content $(n)^3$, and of shape $\nu$ with $\nu_3 = n-m$. Figure~\ref{fig:P_n} illustrates the first $P_n$'s, for $n=0,1,2,3,4,5,6$. Together, they give projections onto $\mathbb{P}_6$ at $\kappa_5 = 6,5,4,3,2,1,0$. \\

The numbers of points in these $P_m$'s are $1,3,7,12,19,27,37,$ etc., which give the numbers of points in $\mathbb{P}_n$ : $1,4,11,23,42,69,106,$ etc.
These two series are known on the OEIS as sequences $A077043$ and $A019298$. This interpretation of these sequences as counting tableaux however appears to be new. %The first was shown to be the number of partitions of $3n$ into exactly $3$ parts (Wesley Ivan Hurt), which might prove useful.

\begin{prop}
	Let $n\in \N$. The number of integer points on $\mathbb{P}_n$ is equal to the sum of integer point in all $P_m$ for $m\leq n$ :
	\[
	\#\{\text{ integer points } (\kappa_1,\kappa_2,\kappa_3,\kappa_4,\kappa_5) \text{ in } \mathbb{P}_n \ \} = 
	\sum_{0\leq m\leq n} \underbrace{\bigg[ \frac{(m+2)(m+1)}{2} + \Bigl\lfloor \frac{m}{2} \Bigr\rfloor \Bigl\lceil \frac{m}{2} \Bigr\rceil \bigg]}_{\normalsize \#\{\text{ integer points } (\kappa_1,\kappa_2,\kappa_3,\kappa_4,\kappa_5) \text{ in } P_m \ \}}
	\]
	\label{thm:numberofintegerpoints}
\end{prop}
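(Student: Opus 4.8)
The plan is to prove the two assertions contained in the statement separately: first, that the integer points of $\mathbb{P}_n$ are partitioned according to the value of $\kappa_5$ into copies of the polytopes $P_m$ for $0\le m\le n$; second, that each $P_m$ contains exactly $\frac{(m+2)(m+1)}{2}+\lfloor m/2\rfloor\lceil m/2\rceil$ integer points.

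For the first assertion I would make explicit the bijection implicit in Section~\ref{section:polytopes}. Send an integer point $(\kappa_1,\kappa_2,\kappa_3,\kappa_4,\kappa_5)$ of $\mathbb{P}_n$ to the pair $(m,(\kappa_1,\kappa_2,\kappa_3,\kappa_4))$ with $m:=n-\kappa_5$. Since $\kappa_5\ge 0$ we have $0\le m\le n$, and $\kappa_1+\kappa_2+\kappa_4=m-\kappa_3$ with $\kappa_3\in\{0,1\}$, so the quadruple lies in $P_m$ — in its outer part if $\kappa_3=0$, in its inner part if $\kappa_3=1$ — while the constraints $\kappa_i\ge 0$ and $\kappa_1+\kappa_2\ge\kappa_4$ are precisely the remaining defining inequalities of $P_m$. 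Conversely, given $m\in\{0,\dots,n\}$ and an integer point of $P_m$, set $\kappa_5:=n-m\ge 0$; one checks directly that $\kappa_1+\kappa_2+\kappa_3+\kappa_4+\kappa_5=n$ and that all constraints of $\mathbb{P}_n$ hold. This is a bijection onto $\bigsqcup_{m=0}^{n}P_m$, so $\#\mathbb{P}_n=\sum_{m=0}^{n}\#P_m$, which is the partition claimed in the statement.

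For the second assertion, write $N(\ell)$ for the number of triples $(\kappa_1,\kappa_2,\kappa_4)\in\Z_{\ge 0}^3$ with $\kappa_1+\kappa_2+\kappa_4=\ell$ and $\kappa_1+\kappa_2\ge\kappa_4$. Forgetting the constant value of $\kappa_3$ identifies the outer part of $P_m$ ($\kappa_3=0$, so $\kappa_1+\kappa_2+\kappa_4=m$) with the set counted by $N(m)$ and the inner part ($\kappa_3=1$, so $\kappa_1+\kappa_2+\kappa_4=m-1$) with the set counted by $N(m-1)$; these are disjoint, hence $\#P_m=N(m)+N(m-1)$ with the convention $N(-1)=0$. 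To evaluate $N(\ell)$, fix $\kappa_4=k$: then $\kappa_1+\kappa_2=\ell-k$, the condition $\kappa_1+\kappa_2\ge\kappa_4$ becomes $k\le\lfloor \ell/2\rfloor$, and there are $\ell-k+1$ admissible pairs $(\kappa_1,\kappa_2)$. Summing,
\[
N(\ell)=\sum_{k=0}^{\lfloor \ell/2\rfloor}(\ell+1-k)=\bigl(\lfloor \ell/2\rfloor+1\bigr)(\ell+1)-\binom{\lfloor \ell/2\rfloor+1}{2}.
\]

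It remains to add $N(m)+N(m-1)$ and match the result with $\frac{(m+2)(m+1)}{2}+\lfloor m/2\rfloor\lceil m/2\rceil$. I would split on the parity of $m$: the formula above gives $N(2q)=\tfrac12(q+1)(3q+2)$ and $N(2q+1)=\tfrac12(q+1)(3q+4)$, and then a short computation yields $\#P_{2t}=3t^2+3t+1$ and $\#P_{2t+1}=3(t+1)^2$, each of which is readily seen to equal the claimed closed form. A convenient sanity check along the way is $\binom{m+2}{2}+\binom{m+1}{2}=(m+1)^2$, which isolates the floor/ceiling correction term. The only genuine obstacle is bookkeeping: since the closed form for $N(\ell)$ depends essentially on the parity of $\ell$, the final identification cannot be carried out uniformly and the parity case split is unavoidable; everything else is a routine lattice-point count.
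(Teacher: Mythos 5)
Your proposal is correct, but the way you count $\#P_m$ is genuinely different from the paper's. The paper slices $P_n$ by strands: it first proves (in the preceding lemmas) that the final point of the $j$-strand sits at $(n-j,j,0)$ or $(0,2n-j,j-n)$ and that the strand of $(3n-j,j)$ contains $\kappa_2-\kappa_4+1 = K^{(3n-j,j)}_{(n)^3}$ points, and then sums $j+1$ over $0\le j\le n$ to get $\tfrac{(n+1)(n+2)}{2}$ and $n+1-2\ell$ over $1\le \ell\le \lfloor n/2\rfloor$ to get $\lfloor n/2\rfloor\lceil n/2\rceil$. You instead ignore the tableau/strand structure entirely and do a direct lattice-point enumeration: splitting $P_m$ into its $\kappa_3=0$ and $\kappa_3=1$ layers, slicing each by the value of $\kappa_4$, and summing $\ell-k+1$ over $0\le k\le\lfloor \ell/2\rfloor$. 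I checked your closed forms: $N(2q)=\tfrac12(q+1)(3q+2)$, $N(2q+1)=\tfrac12(q+1)(3q+4)$, and $\#P_{2t}=3t^2+3t+1$, $\#P_{2t+1}=3(t+1)^2$ all agree with $\tfrac{(m+2)(m+1)}{2}+\lfloor m/2\rfloor\lceil m/2\rceil$ and reproduce the sequence $1,3,7,12,19,27,37,\dots$. Your reduction of $\mathbb{P}_n$ to $\bigsqcup_{m\le n}P_m$ via $m=n-\kappa_5$ is the same step the paper takes (and you make the bijection more explicit than the paper does). The trade-off: your argument is shorter and self-contained, needing none of the strand lemmas; the paper's argument is longer but keeps the count tied to the combinatorial interpretation (each summand is visibly a Kostka number $K^{(3n-j,j)}_{(n)^3}$), which is what the surrounding sections actually use. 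Your parity case split is indeed unavoidable in your route, whereas the paper's split into $j\le n$ versus $j>n$ plays the analogous role there.
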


%We call the first point of the strand its initial point, and the last one, its final point. 
%We have seen that the strands associated to $(3n-j,j)$ with $j\leq n$ are complete, and those with $j>n$, incomplete. %: if an initial point (in $P_{n-1}$) was on the plane $\kappa_1+\kappa_2=\kappa_4$, then it will no longer be an initial point. Similarly, if a final point (in $P_{n-1}$) was on the line $\kappa_2+\kappa_4=n-1$, then it will no longer be a final point and points may need to be added, as discussed above. 
The proof rests on the following lemmas. In particular, we show that the last part of the formula for $P_m$ gives the number of points which are added to incomplete stands when going from $P_{m-1}$ to $P_m$.

\begin{lem}
	Final points of strands in $P_n$ are integer points on lines $\kappa_1+\kappa_2 = n$ (so $\kappa_4=0)$ or on $\kappa_2+\kappa_4 = n$ with $\kappa_2\geq \kappa_4$ ( so $\kappa_1=0)$). In particular, $\kappa_3=0$ and they lie on $\kappa_1+\kappa_2+\kappa_4=n$.  \\
	
	The position of the final point of the $j$-strand in $P_n$ is $(\kappa_1,\kappa_2,\kappa_4) = (n-j,j,0)$ if $0\leq j\leq n$, and $(\kappa_1,\kappa_2,\kappa_4) = (0,2n-j,j-n)$ if $n\leq j\leq \Bigl\lfloor \frac{3n}{2} \Bigr\rfloor$.
\end{lem}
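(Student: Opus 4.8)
The final point of the $j$-strand in $P_n$ is by definition the one whose tableau carries the maximal number of $2$'s on its second row. Since $(3n-j,j)$ has two parts, the corresponding points have $\kappa_5=\nu_3=0$, and the number of $2$'s on the second row equals $\kappa_2+\kappa_5=\kappa_2$; so the statement reduces to locating the integer point of the $j$-strand with the largest $\kappa_2$. That this $\kappa_2$-maximal point really is the last one in the strand order is exactly what was set up in Sections~\ref{section:ribTab} and \ref{section:polytopes}: the two elementary moves along a strand, $(\kappa_1,\kappa_2,\kappa_3,\kappa_4)+(-1,+1,+1,-1)$ (when $\kappa_3=0$) and $(\kappa_1,\kappa_2,\kappa_3,\kappa_4)+(0,+1,-1,0)$ (when $\kappa_3=1$), each raise $\kappa_2$ by $1$, so the strand runs monotonically from minimal to maximal $\kappa_2$.

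First I would record the linear conditions cutting out the $j$-strand: a tuple $(\kappa_1,\kappa_2,\kappa_3,\kappa_4,0)$ lies in it precisely when $\kappa_i\ge 0$, $\kappa_3\le 1$, $\kappa_1+\kappa_2\ge\kappa_4$, together with $\kappa_1+\kappa_2+\kappa_3+\kappa_4=n$ and $\kappa_2+\kappa_3+2\kappa_4=\nu_2=j$. Subtracting the two equalities yields $\kappa_1=n-j+\kappa_4$ and hence $\kappa_2=j-\kappa_3-2\kappa_4$. As $\kappa_2$ is strictly decreasing in both $\kappa_3\ge 0$ and $\kappa_4\ge 0$, it is maximized by taking $\kappa_3=0$ and $\kappa_4$ as small as possible; the only constraint on $\kappa_4$ beyond $\kappa_4\ge 0$ is $\kappa_1=n-j+\kappa_4\ge 0$, i.e.\ $\kappa_4\ge j-n$. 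So the maximizer has $\kappa_3=0$ and $\kappa_4=\max(0,j-n)$, and the case split $j\le n$ versus $j\ge n$ is forced on us here. If $0\le j\le n$ this gives $(\kappa_1,\kappa_2,\kappa_3,\kappa_4)=(n-j,j,0,0)$, which is admissible since $\kappa_1+\kappa_2=n\ge 0=\kappa_4$, and in the projection it is $(n-j,j,0)$, lying on $\kappa_4=0$, $\kappa_1+\kappa_2=n$. If $n\le j\le\lfloor 3n/2\rfloor$ we get $(\kappa_1,\kappa_2,\kappa_3,\kappa_4)=(0,2n-j,0,j-n)$, where $\kappa_2=2n-j\ge 0$ because $j\le 2n$, and $\kappa_1+\kappa_2=2n-j\ge j-n=\kappa_4$ exactly because $j\le\tfrac{3n}{2}$ (which simultaneously gives $\kappa_2\ge\kappa_4$); in the projection this is $(0,2n-j,j-n)$, on $\kappa_1=0$, $\kappa_2+\kappa_4=n$ with $\kappa_2\ge\kappa_4$. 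In both cases $\kappa_3=0$, so the point lies on $\kappa_1+\kappa_2+\kappa_4=n$, giving the first assertion.

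\textbf{Main obstacle.} There is nothing genuinely hard; the two points needing care are (i) justifying that the $\kappa_2$-maximal point is indeed the terminal point of the strand — handled by invoking the description of the two strand moves above — and (ii) noticing that in the second case the admissibility inequality $\kappa_1+\kappa_2\ge\kappa_4$ is precisely the condition $j\le 3n/2$, i.e.\ that $(3n-j,j)$ is an honest partition, so no extra hypothesis is required. One may instead argue on the tableau side: writing the first row as $1^{n}2^{a}3^{b}$ and the second row as $2^{c}3^{d}$, semistandardness forbids a $3$ sitting below a $3$ and therefore forces $a\ge j-n$ (and $c\le j$ forces $a\ge n-j$); maximizing $c=n-a$ reproduces the same two cases, which one then translates into $\kappa$-coordinates via the bijection of Proposition~\ref{thm:bijRibbTab}.
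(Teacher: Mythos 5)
Your proof is correct and follows essentially the same route as the paper's: both identify the final point via the strand moves (which each raise $\kappa_2$ by one) and then solve the linear system $\kappa_1+\kappa_2+\kappa_3+\kappa_4=n$, $\kappa_2+\kappa_3+2\kappa_4=j$ under the polytope constraints, splitting on $j\leq n$ versus $j\geq n$. The only cosmetic difference is that you phrase the endpoint condition as maximizing $\kappa_2$ (forcing $\kappa_3=0$ and $\kappa_4=\max(0,j-n)$), whereas the paper phrases it as the impossibility of applying further transformations (forcing $\kappa_3=0$ and $\kappa_1=0$ or $\kappa_4=0$); these are equivalent via $\kappa_1=n-j+\kappa_4$.
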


\begin{proof}
	Let's start by noting that all final points must lay on the lines $\kappa_1+\kappa_2 = n$ or $\kappa_2+\kappa_4 = n$.
	If $\kappa_3 = 1$, we can always apply the transformation $(\kappa_1,\kappa_2,\kappa_4)+(0,+1,0)$, which changes the boolean value of $\kappa_3$. So final points must have $\kappa_3=0$, and must lie on the polytope $\kappa_1+\kappa_2+\kappa_4 = n$.
	If neither $\kappa_1=0$ or $\kappa_4=0$ (and $\kappa_3=0$), then we can apply the two transformations $(\kappa_1,\kappa_2,\kappa_4)+(-1,+1,-1)+(0,+1,0)$ iteratively until one or both are zero. Then no more transformations can be applied, and the final point must lie on the wanted lines. Note that if any considered starting point has $\kappa_1+\kappa_2\geq \kappa_4$, then so will the final point.\\
	
	%The strands of $P_m$ correspond to partitions $(3m-j,j)$, for $0\leq j\leq \lfloor \frac{3n}{2} \rfloor$. 
	
	Recall that points $(\kappa_1,\kappa_3,\kappa_2,\kappa_4,\kappa_5)$ on $\mathbb{P}_n$ that give $\nu = (3n-j,j)$, are such that $\nu_3 = \kappa_5 = 0$, $\nu_2 = j = \kappa_2+\kappa_3+2\kappa_4+\kappa_5$ and $\nu_1 = 3n-j = 3\kappa_1+2\kappa_2+2\kappa_3+\kappa_4+\kappa_5$. Therefore, the final point of $(3n-j,j)$ in $P_n$ will be such that $j = \kappa_2+2\kappa_4$, and $3m-j = 3\kappa_1+2\kappa_2+\kappa_4$.\\
	
	Final point will have either $\kappa_1 = 0$, or $\kappa_4=0$, as seen above.\\
	
	If $\kappa_4 = 0$, then $j=\kappa_2$ and $\kappa_1 = n-j$, which is greater or equal to zero if and only if $0\leq j\leq n$.
	Therefore, the $m+1$ partitions $(3n-j,j)$ such that $0\leq j\leq n$ will have their final point on $\kappa_1+\kappa_2=n$ ($\kappa_4=0$), in position $(\kappa_1,\kappa_2,\kappa_4) = (m-j,j,0)$.
	
	Now, if $\kappa_1 = 0$, then $\kappa_2=2n-j$ and $\kappa_4 = j-n$, which is greater or equal to zero if and only if $n\leq j\leq \lfloor \frac{3n}{2} \rfloor$ .
	Therefore, the $\lfloor \frac{n}{2}\rfloor +1 $ partitions $(3n-j,j)$ such that $n\leq j\leq \lfloor \frac{3n}{2}\rfloor$ will have their final point on $\kappa_2+\kappa_4=n$ ($\kappa_1=0$), in position $(\kappa_1,\kappa_2,\kappa_4) = (0,2n-j,j-n)$.
\end{proof}

\begin{rem}
	Final points of $\nu=(3n-j,j)$ will lie on $\kappa_1+\kappa_2=n$ when $3n-2j = \nu_1-\nu_2\geq \nu_2-\nu_3 = j$, and on $\kappa_2+\kappa_4=n$ otherwise. %The point at the intersection of both lines corresponds to the partition $(2n,n)$.
\end{rem}

%\begin{rem}
	%The fact that we always have $\kappa_3 = 0$ for the final point of the strands is quite surprising : when thinking in terms of tableaux, it means that the last tableau obtained through exchanges will always have an even number of entries $3$'s on the second row. In particular, this doesn't depend on the shape $\nu=(3m-j,j)$. It also means that the maximal value $\kappa_2$ will have the same parity as $j$.
%\end{rem}

\begin{cor}
	The partitions $(3n-j,j)$ with $0\leq j\leq \lfloor \frac{3n}{2} \rfloor$ are in bijection with integer points in $P_n$ on the lines $\kappa_1+\kappa_2 = n$ ($\kappa_4=0$) and $\kappa_2+\kappa_4=n$ with $\kappa_2\geq \kappa_4$ ($\kappa_1 = 0$).
\end{cor}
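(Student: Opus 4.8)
The plan is to exhibit the bijection explicitly and then read off every property from the lemma that was just proved; the corollary is essentially a bookkeeping consequence. I would define a map $\Phi$ on the partitions $(3n-j,j)$ with $0\le j\le\lfloor\tfrac{3n}{2}\rfloor$ by letting $\Phi((3n-j,j))$ be the final point of the $j$-strand of $P_n$. First I would check that $\Phi$ is well defined, i.e. that the $j$-strand of $P_n$ is non-empty (hence possesses a final point) for every such $j$: by the earlier discussion a $j$-strand first appears in $P_{\lceil 2j/3\rceil}$, and $\lceil 2j/3\rceil\le n$ is equivalent to $2j\le 3n$, i.e. $j\le\lfloor\tfrac{3n}{2}\rfloor$. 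Distinct values of $j$ clearly give distinct partitions, so $\Phi$ is defined on exactly the indexing set in the statement.

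Next I would invoke the preceding lemma directly: it gives $\Phi((3n-j,j))=(n-j,\,j,\,0)$ (so $\kappa_3=0$, $\kappa_4=0$) when $0\le j\le n$, an integer point on $\kappa_1+\kappa_2=n$; and $\Phi((3n-j,j))=(0,\,2n-j,\,j-n)$ when $n\le j\le\lfloor\tfrac{3n}{2}\rfloor$, an integer point on $\kappa_1=0$, $\kappa_2+\kappa_4=n$ with $\kappa_2-\kappa_4=3n-2j\ge 0$. Hence $\Phi$ takes values in the asserted set of integer points.

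For injectivity I would note that within $0\le j\le n$ the points $(n-j,j,0)$ are pairwise distinct, within $n\le j\le\lfloor\tfrac{3n}{2}\rfloor$ the points $(0,2n-j,j-n)$ are pairwise distinct, and a value from the first range with $j<n$ has $\kappa_1=n-j>0$ whereas every value from the second range has $\kappa_1=0$; the two parametrizations agree only at $j=n$, which is the single partition $(2n,n)$ mapped to the single point $(0,n,0)$. For surjectivity, an arbitrary integer point of $P_n$ on $\kappa_1+\kappa_2=n$, $\kappa_4=0$ is $(n-i,i,0)$ for some $0\le i\le n$, namely $\Phi((3n-i,i))$; an arbitrary integer point on $\kappa_1=0$, $\kappa_2+\kappa_4=n$ with $\kappa_2\ge\kappa_4$ is $(0,n-i,i)$ for some $0\le i\le\lfloor\tfrac{n}{2}\rfloor$, namely $\Phi((2n-i,\,n+i))$, and $n+i\le n+\lfloor\tfrac n2\rfloor=\lfloor\tfrac{3n}{2}\rfloor$ so this partition lies in the domain. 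This establishes that $\Phi$ is a bijection.

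I do not expect a genuine obstacle: the statement follows formally from the lemma. The only things to keep straight are that the $j$-strand actually occurs in $P_n$ over the full range $0\le j\le\lfloor\tfrac{3n}{2}\rfloor$, and that the overlap of the two cases at $j=n$ (giving the point $(0,n,0)$, which lies on both lines) is handled so that $\Phi$ is a genuine bijection rather than two-to-one there. As a consistency check I would compare cardinalities: the domain has $\lfloor\tfrac{3n}{2}\rfloor+1$ elements, while the union of the two line-segments of integer points has $(n+1)+(\lfloor\tfrac n2\rfloor+1)-1=\lfloor\tfrac{3n}{2}\rfloor+1$ elements, the $-1$ accounting for the shared point $(0,n,0)$.
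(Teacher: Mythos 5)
Your proposal is correct and follows exactly the route the paper intends: the corollary is stated without proof as an immediate consequence of the preceding lemma, and your map $\Phi$ sending $(3n-j,j)$ to the final point of its $j$-strand, together with the injectivity/surjectivity checks and the cardinality count $(n+1)+(\lfloor\frac{n}{2}\rfloor+1)-1=\lfloor\frac{3n}{2}\rfloor+1$, is precisely the spelled-out version of that deduction. No gaps.
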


\begin{lem}
	For $\nu = (3n-j,j)$ fixed, the value of $K^{(3n-j,j)}_{(n)^3} = \min(\nu_1-\nu_2,\nu_2-\nu_3)+1$ is given by $\kappa_2-\kappa_4+1$, for $(\kappa_1,\kappa_2,\kappa_4)$ the final point of the strand of $\nu$ in $P_n$.
\end{lem}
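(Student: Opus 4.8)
The plan is to read off both sides directly from the description of the final point of the $j$-strand supplied by the previous lemma, distinguishing the two ranges $0\le j\le n$ and $n\le j\le\lfloor\tfrac{3n}{2}\rfloor$ into which that lemma already splits. First I would record that for $\nu=(3n-j,j)$ we have $\nu_1-\nu_2=3n-2j$ and $\nu_2-\nu_3=j$, so that by Proposition~\ref{prop:spanningAllTabOfACertainShape}, $K^{(3n-j,j)}_{(n)^3}=\min(3n-2j,\,j)+1$. It then suffices to check, case by case, that $\kappa_2-\kappa_4+1$ computed at the final point equals this minimum plus one.

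In the case $0\le j\le n$, the previous lemma gives the final point of the $j$-strand as $(\kappa_1,\kappa_2,\kappa_4)=(n-j,j,0)$, whence $\kappa_2-\kappa_4+1=j+1$. Since $j\le n$ forces $3n-2j\ge n\ge j$, we have $\min(3n-2j,\,j)=j$, so $K^{(3n-j,j)}_{(n)^3}=j+1$, matching. In the case $n\le j\le\lfloor\tfrac{3n}{2}\rfloor$, the final point is $(\kappa_1,\kappa_2,\kappa_4)=(0,\,2n-j,\,j-n)$, whence $\kappa_2-\kappa_4+1=(2n-j)-(j-n)+1=3n-2j+1$. Since $j\ge n$ forces $3n-2j\le n\le j$, we have $\min(3n-2j,\,j)=3n-2j$, so $K^{(3n-j,j)}_{(n)^3}=3n-2j+1$, again matching. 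As these two ranges of $j$ exhaust all two-part partitions $(3n-j,j)$ (the previous lemma explicitly covers both), the identity holds in all cases.

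The one point worth a sentence is the overlap at $j=n$: there both prescriptions for the final point coincide ($(n-j,j,0)=(0,n,0)=(0,2n-j,j-n)$), and both formulas yield $n+1$, so there is no ambiguity. I do not expect any genuine obstacle here; the statement is purely a bookkeeping consequence of the preceding lemma together with Thrall's formula in Proposition~\ref{prop:spanningAllTabOfACertainShape}, and the remark following the previous lemma (final points lie on $\kappa_1+\kappa_2=n$ exactly when $\nu_1-\nu_2\ge\nu_2-\nu_3$) is precisely the statement that the active branch of the minimum is visible in the position of the final point.
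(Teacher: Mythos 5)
Your proof is correct and follows essentially the same route as the paper's: in both ranges $0\le j\le n$ and $n\le j\le\lfloor\frac{3n}{2}\rfloor$ you substitute the final point from the preceding lemma and match $\kappa_2-\kappa_4+1$ against the active branch of $\min(\nu_1-\nu_2,\nu_2-\nu_3)+1$. Your explicit verification of which branch of the minimum is active, and the consistency check at the overlap $j=n$, are slightly more careful than the paper's version but add nothing structurally different.
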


\begin{proof}
	We have seen that if $j\leq m$ then the final point is in position $(\kappa_1,\kappa_2,\kappa_4)=(m-j,j,0)$ in $P_n$. Then $K^\nu_{(n)^3} = \nu_2-\nu_3 +1 = j+1 = \kappa_2-\kappa_4+1$.\\
	
	Similarly, if $m\leq j \leq \lfloor \frac{3m}{2}\rfloor$, then the final point is in position $(\kappa_1,\kappa_2,\kappa_4)=(0,2n-j,j-n)$ in $P_n$. Then $K^\nu_{(n)^3} = \nu_1-\nu_2 +1 = 3n-2j +1 = \kappa_2 -\kappa_4+1$.
\end{proof}

\begin{proof}[Proof of Proposition~\ref{thm:numberofintegerpoints}]
	
	The $n+1$ strands in $P_n$ associated with partitions $(3n-j,j)$ with $0\leq j\leq n$ have their final points on line $\kappa_1+\kappa_2=n$, and have $K^{(3n-j,j)}_{(n)^3} = j+1$. Therefore the number of integer points on $P_n$ in these strands is $\displaystyle \sum_{0\leq j\leq n} j+1 = \frac{(n+1)(n+2)}{2}$.\\
	
	The $\lceil \frac{n}{2}\rceil$ strands associated with partitions $(3n-j,j)$ with $n< j\leq \lfloor \frac{3n}{2}\rfloor$ have their final point on the line $\kappa_2+\kappa_4=n$, with $\kappa_2\geq \kappa_4$. Then they have $K^{(3n-j,j)}_{(n)^3}=3n-2j+1 = n+1-2\ell$, where $\ell = j-n$.
	
	The number of point on these strands is $\displaystyle \sum_{1\leq \ell \leq \lfloor \frac{n}{2}\rfloor} n+1-2\ell  = \lfloor \frac{n}{2}\rfloor(n-\lfloor \frac{n}{2}\rfloor) = \lfloor \frac{n}{2}\rfloor\lceil \frac{n}{2}\rceil$.
	This gives the wanted result for the number of points in $P_n$.
	
	As discussed above, $\mathbb{P}_n$ is constructed from all the $P_m$ with $m\leq n$, by letting $\kappa_5 = n-m$. Therefore, we have the wanted result for $\mathbb{P}_n$ as well.
\end{proof}

\end{document}